\pdfoutput=1
\def\arXiv{1} 
\documentclass[11pt]{article}

\newcommand{\notarxiv}[1]{foo}
\newcommand{\arxiv}[1]{ba}
\ifdefined \arXiv
\renewcommand{\arxiv}[1]{#1}%
\renewcommand{\notarxiv}[1]{\ignorespaces}%
\else%
\renewcommand{\arxiv}[1]{\ignorespaces}%
\renewcommand{\notarxiv}[1]{#1}%
\fi%

\usepackage{thmtools}
\usepackage{thm-restate}

 \notarxiv{
 	\undef\corollary
 	\undef\definition
	\setcitestyle{numbers,square,comma} 
	\declaretheorem[name=Lemma,sibling=theorem]{lem}
	\declaretheorem[name=Proposition,sibling=theorem]{prop}
	\declaretheorem[name=Corollary,sibling=theorem]{corollary}
	\declaretheorem[name=Definition,sibling=theorem]{definition}

	\undef\lemma
	\undef\proposition	
}

\arxiv{
	\usepackage{amsthm}
	\usepackage[linesnumbered,ruled,vlined,boxed,algo2e]{algorithm2e}

	\usepackage[dvipsnames]{xcolor}
	\usepackage[top=1in, right=1in, left=1in, bottom=1in]{geometry}
	\usepackage[numbers,square]{natbib}
	\usepackage{url}
	\usepackage[hidelinks]{hyperref}
	\hypersetup{
		colorlinks=true,
		linkcolor=blue!70!black,
		citecolor=blue!70!black,
		urlcolor=blue!70!black}
	
	\theoremstyle{plain}
	
	\newtheorem{theorem}{Theorem}
	\newtheorem{lem}{Lemma}

	\newtheorem{corollary}{Corollary}
	\newtheorem{definition}{Definition}
	
	\theoremstyle{definition}

	\newtheorem*{example*}{Example}

}

\usepackage{enumitem}
\usepackage{amssymb}
\usepackage{amsbsy}
\usepackage{amsmath}

\usepackage{cleveref}
\Crefname{lem}{Lemma}{Lemmas}
\Crefname{prop}{Proposition}{Propositions}

\usepackage{amsfonts}
\usepackage{latexsym}
\usepackage{graphicx}
\usepackage{color}
\usepackage{xifthen}
\usepackage{xspace}
\usepackage{mathtools}
\usepackage{bbm}
\usepackage{multirow}
\usepackage[normalem]{ulem}
\usepackage{array}
\usepackage[utf8]{inputenc}
\usepackage[T1]{fontenc}
\usepackage{etoolbox}
\newtoggle{restatements}

\newtoggle{heavyplots}

\usepackage{xfrac}

\usepackage{titlesec}

\usepackage{setspace}

\usepackage{esvect}

\usepackage{nicefrac}

\usepackage{cases}
\usepackage{empheq}

 \usepackage{booktabs}
 \usepackage{multirow}
 
 \usepackage{caption}

\DeclarePairedDelimiter{\abs}{\lvert}{\rvert} %
\DeclarePairedDelimiter{\brk}{[}{]}
\DeclarePairedDelimiter{\crl}{\{}{\}}
\DeclarePairedDelimiter{\prn}{(}{)}

\DeclarePairedDelimiter{\norm}{\|}{\|}

\DeclarePairedDelimiter{\ceil}{\lceil}{\rceil}
\DeclarePairedDelimiter{\floor}{\lfloor}{\rfloor}

\newcommand{\overeq}[1]{\overset{#1}{=}}
\newcommand{\overle}[1]{\overset{#1}{\le}}
\newcommand{\overge}[1]{\overset{#1}{\ge}}

\NewDocumentCommand\Ex{s O{} m }{%
	\mathbb{E}%
	\begingroup
	\IfBooleanTF{#1}
	{\ExInn*{#3}}
	{\ExInn[#2]{#3}}%
	\endgroup
}

\DeclarePairedDelimiterX\ExInn[1]{[}{]}{%
	\activatebar
	#1%
}

\RenewDocumentCommand\Pr{sO{}r()}{%
	\mathbb{P}%
	\begingroup
	\IfBooleanTF{#1}
	{\PrInn*{#3}}
	{\PrInn[#2]{#3}}%
	\endgroup
}

\DeclarePairedDelimiterX\PrInn[1](){%
	\activatebar
	#1%
}

\newcommand{\activatebar}{%
	\begingroup\lccode`~=`|
	\lowercase{\endgroup\def~}{\;\delimsize\vert\;}%
	\mathcode`|=\string"8000
}

\newcommand\numberthis{\addtocounter{equation}{1}\tag{\theequation}}

\newcommand{\normBig}[1]{\Big\|{#1}\Big\|} %
\newcommand{\R}{\mathbb{R}} %
\newcommand{\N}{\mathbb{N}} %
\newcommand{\E}{\mathbb{E}} %
\renewcommand{\P}{\mathbb{P}}	%
\newcommand{\I}{\mathbb{I}} %

\usepackage{xcolor}
\usepackage[algo2e]{algorithm2e}

\SetCommentSty{mycommfont}
\SetKwInput{KwInput}{Input} 
\SetKwInput{KwParameter}{Parameters}                %
\SetKwInput{KwOutput}{Output}              %
\SetKwInput{KwReturn}{Return}              %
\SetKwComment{Comment}{$\triangleright$\ }{}
\SetCommentSty{mycommfont}

\makeatletter
\let\oldnl\nl%
\newcommand{\nonl}{\renewcommand{\nl}{\let\nl\oldnl}}%
\makeatother

\DeclareMathOperator*{\argmin}{arg\,min}

\providecommand{\abs}{\mathop{\rm abs}}

\providecommand{\minimize}{\mathop{\rm minimize}}

\newcommand{\half}{\frac{1}{2}}

\newcommand{\defeq}{\coloneqq}

\newcommand{\grad}{\nabla}
\newcommand{\hess}{\nabla^2}

\newcommand{\eps}{\epsilon}

\newcommand{\Otil}[1]{\widetilde{O}( #1 )}
\newcommand{\Omtil}[1]{\widetilde{\Omega}( #1 )}
\newcommand{\Otilb}[1]{\widetilde{O}\left( #1 \right)}

\newcommand{\inner}[2]{\left<#1,#2\right>}

\newcommand{\ball}{\mathbb{B}}

\newcommand{\indic}[1]{\I{\{#1\}}}

\newcommand{\veps}{\varepsilon}

\usepackage[textsize=scriptsize,textwidth=2cm]{todonotes}
\newcommand{\yairside}[1]{\todo[color=blue!10]{Yair: #1}}
\newcommand{\yair}[1]{{\bf \color{blue} Yair: #1}}

\newcommand{\sidford}[1]{{\bf \color{purple} Sidford: #1}}
\newcommand{\arun}[1]{{\bf \color{orange} Arun: #1}}

  \renewcommand{\yairside}[1]{\ignorespaces}
  \renewcommand{\yair}[1]{\ignorespaces}
  \renewcommand{\sidford}[1]{\ignorespaces}
  \renewcommand{\arun}[1]{\ignorespaces}

\newcommand{\opt}{_\star}
\newcommand{\xopt}{x\opt}

\newcommand{\F}{F_{\max}}
\newcommand{\Fsm}[1][\epsilon]{F_{\mathrm{smax},{#1}}}

\newcommand{\beF}{\Gamma_{\epsilon,\lambda}}

\newcommand{\bx}{\bar{x}}  %
\newcommand{\reps}{r_\epsilon}

\newcommand{\Lf}{L_f}  %
\newcommand{\Lg}{L_g}  %
\newcommand{\lip}{\Lf} %

\newcommand{\domain}{\R^d}

\newcommand{\bprox}[2][\lambda,r]{\mathrm{bprox}_{#1}^{#2}}
\newcommand{\prox}[2][\lambda]{\mathrm{prox}_{#1}^{#2}}

\newcommand{\coind}[1]{_{[#1]}}  %

\newcommand{\gradient}{\nabla}
\newcommand{\hessian}{\gradient^2}
\newcommand{\identityMatrix}{I}

\newcommand{\oracle}[2][\lambda,\delta]{\mathcal{O}_{#1}(#2)}
\newcommand{\oracles}[1][\lambda,\delta]{\mathcal{O}_{#1}}

\newcommand{\Tf}{\mathcal{T}_f}
\newcommand{\Tg}{\mathcal{T}_g}

\newcommand{\poly}{\mathsf{poly}}
\newcommand{\Thetatil}[1]{\widetilde{\Theta}(#1)}

\newcommand{\xhat}{\hat{x}}
\newcommand{\hx}{\xhat}
\newcommand{\linesearch}{\lambda\textsc{-Bisection}}

\newcommand{\q}{\tilde{y}}

\newcommand{\xtilde}{\widetilde{x}}

\newcommand{\BOO}{BROO\xspace}

\notarxiv{
	\usepackage{times}
}

\notarxiv{
	\title[Thinking Inside the Ball]{Thinking Inside the Ball:\\ 
	Near-Optimal Minimization of the Maximal Loss}

\coltauthor{%
	\Name{Yair Carmon} \Email{ycarmon@cs.tau.ac.il}\\
	\addr Tel Aviv University
	\AND
	\Name{Arun Jambulapati} \Email{jmblpati@stanford.edu}\\
	\addr Stanford University%
	\AND
	\Name{Yujia Jin} \Email{yujiajin@stanford.edu}\\
	\addr Stanford University%
	\AND
	\Name{Aaron Sidford} \Email{sidford@stanford.edu}\\
	\addr Stanford University%
}
}
\arxiv{
	\title{Thinking Inside the Ball:\\ 
	Near-Optimal Minimization of the Maximal Loss}

	\author{Yair Carmon ~~~ Arun Jambulapati ~~~ Yujia Jin ~~~ Aaron Sidford\\
	\href{mailto:ycarmon@cs.tau.ac.il}{\texttt{ycarmon@cs.tau.ac.il}}, 
	\texttt{\{\href{mailto:jmblpati@stanford.edu}{jmblpati},%
		\href{mailto:yujiajin@stanford.edu}{yujiajin},%
		\href{mailto:sidford@stanford.edu}{sidford}\}@stanford.edu}}
	\date{}
}

\begin{document}

\maketitle

\begin{abstract}%
\label{abstract}
We characterize the complexity of minimizing $\max_{i\in[N]} f_i(x)$ for convex, Lipschitz functions $f_1,\ldots, f_N$. 
For non-smooth functions, existing methods require $O(N\epsilon^{-2})$ queries to a first-order oracle  to compute an $\epsilon$-suboptimal point and $\Otil{N\epsilon^{-1}}$ queries if the $f_i$ are  $O(1/\epsilon)$-smooth.
We develop methods with improved complexity bounds of $\Otil{N\epsilon^{-2/3} + \epsilon^{-8/3}}$ in the non-smooth case and $\Otil{N\epsilon^{-2/3} + \sqrt{N}\epsilon^{-1}}$ in the $O(1/\epsilon)$-smooth case.
Our methods consist of a recently proposed ball optimization oracle acceleration algorithm (which we refine) and a careful implementation of said oracle for the softmax function. %
We also prove an oracle complexity lower bound scaling as $\Omega(N\epsilon^{-2/3})$, showing that our dependence on $N$ is optimal up to polylogarithmic factors. 
\end{abstract}

\section{Introduction}

Consider the problem of approximately minimizing the maximum of $N$ convex functions: given $f_1, \ldots, f_N$ such that for every $i\in [N]$ the function $f_i: \R^d \to \R$ is convex, Lipschitz and possibly smooth, and a target accuracy $\epsilon$, 
\begin{equation}\label{eq:problem}
\mbox{find a point $x$ such that }~
\F(x) - \inf_{\xopt \in \R^d} \F(\xopt) \le \epsilon
	~~\mbox{where}~~
	\F(x) \defeq \max_{i\in[N]} f_i(x) ~.
\end{equation}

Problems of this form play significant roles in optimization and machine learning. The maximum of $N$ functions is a canonical example of structured non-smoothness and several works develop methods for exploiting it~\cite{nesterov2005smooth,nemirovski2009robust,shalev2016minimizing,bullins2020highly,carmon2020acceleration}. The special case where the $f_i$'s are linear functions is particularly important for machine learning, since it is equivalent to hard-margin SVM training (with $f_i$ representing the negative margin on the $i$th example)~\cite{vapnik1999overview,clarkson2012sublinear,hazan2011beating}. 
Going beyond the linear case, \citet{shalev2016minimizing} argue that minimizing the maximum classification loss can have advantageous effects on training speed and generalization in the presence of rare informative examples. 
Moreover, minimizing the worst-case objective is the basic paradigm of robust optimization~\cite{bental2009robust,namkoong2016stochastic}. In particular, since $\F(x)=\max_{p\in \Delta^N} \sum_{i\in[N]} p_i f_i(x)$ the problem corresponds to an extreme case of distributionally robust optimization~\cite{bental2013robust} with an uncertainty set that encompasses the entire probability simplex $\Delta^N$.

The goal of this paper is to characterize the complexity of this fundamental problem. We are particularly interested in the regime where the number of data points $N$ and the problem dimension $d$ are large compared to the desired level of accuracy $1/\epsilon$, as is common in modern machine learning.
Consequently, we focus on dimension-independent first-order methods (i.e., methods which only rely on access to $f_i(x)$ and a (sub)gradient $\grad f_i(x)$
as opposed to higher-order derivatives), and report complexity in terms of the number of function/gradient evaluations required to solve the problem.

\subsection{Related work}
To put our new complexity bounds in context, we first review the prior art in solving the problem~\eqref{eq:problem} with first-order methods. For simplicity of presentation, throughout the introduction we assume  each $f_i$ is 1-Lipschitz and that $\F$ has a global minimizer $\xopt$ with (Euclidean) norm at most 1. 
The simplest approach to solving the problem~\eqref{eq:problem} is the subgradient method~\cite{nesterov2018lectures}. This method finds an $\epsilon$-accurate solution in $O(\epsilon^{-2})$ iterations, with each step computing a subgradient of $\F$, which in turn requires evaluation of all $N$ function values and a single gradient. Consequently, the complexity of this method is $O(N\epsilon^{-2})$. We are unaware of prior work obtaining improved complexity without further assumptions.\footnote{The center of gravity method \cite{levin1965cg,newman65cg} yields a query complexity $O(N d \log(1/\epsilon))$ which is an improvement only for sufficiently small problem dimension $d$.}

However, even a weak bound on smoothness helps: if each $f_i$ has $O(1/\epsilon)$-Lipschitz gradient, then it is possible to minimize $\F$ to accuracy $\epsilon$ with complexity $\Otil{N\epsilon^{-1}}$ ~\cite{nesterov2005smooth}.\footnote{Throughout the paper, the  $\Otil{\cdot}$ and $\Omtil{\cdot}$ hide polylogarithmic factors. } This result relies on the so-called ``softmax'' approximation of the maximum, 
\begin{equation}\label{eq:softmax}
	\Fsm(x) \defeq 
	\epsilon' \log\prn*{\sum_{i\in[N]} e^{f_i(x) / \epsilon'}},~~\mbox{where }
	\epsilon' = \frac{\epsilon}{2\log N}.
\end{equation}
It is straightforward to show that $\abs{\Fsm(x)-\F(x)} \le \frac{\epsilon}{2}$
for all $x\in\R^d$, 
and that  $\grad \Fsm$ is $\Otil{1/\epsilon}$-Lipschitz if $\grad f_i$ is $O(1/\epsilon)$-Lipschitz for every $i$. Therefore, Nesterov's accelerated gradient descent~\cite{nesterov2005smooth} finds a minimizer of $\Fsm$ to accuracy $\frac{\epsilon}{2}$ in $\Otil{\sqrt{1/\epsilon}/\sqrt{\epsilon}}$ iterations, with each iteration requiring $N$ evaluations of $f_i$ and $\grad f_i$ to compute $\grad \Fsm$, yielding the  claimed bound.
The assumption that $\grad f_i$ is $O(1/\epsilon)$-Lipschitz is fairly weak; see \Cref{sec:app-discussion-smoothness} for additional discussion.

 Given more smoothness, further improvement is possible. \citet[Section 2.3.1]{nesterov2018lectures} shows that it suffices to solve $O(\sqrt{\Lg/\epsilon})$ linearized subproblems of the form $\min_{x\in\R^d} \max_{i\in[N]}\big\{ f_i(y_t) +(\grad f_i(y_t))^\top (x-y_t) + \frac{\Lg}{2}\norm{x-y_t}^2 \big\}$. This yields a query complexity upper bound of $O(N\sqrt{\Lg/\epsilon})$, Though the complexity of solving each subproblem is not immediately clear, in  \Cref{sec:app-discussion-nesterov} we explain how a first-order method \cite{carmon2019variance} solves the subproblem to sufficient precision. Additional schemes for solving~\eqref{eq:problem} in the special case of linear functions (i.e., $\Lg=0$) are discussed in \Cref{sec:app-discussion-linear}.

A powerful technique for solving optimization problems with a large number $N$ of component functions is sampling components in order to compute cheap  unbiased gradient estimates. However, both $\F$ and $\Fsm$ are not given as linear combinations of the $f_i$'s. Consequently, it is not clear how to efficiently compute unbiased estimators for their gradients.
Several works address this by considering the saddle point problem
\begin{equation*}
	\min_{x\in\R^d}\max_{p\in\Delta^N} F_{\mathrm{pd}}(x;p) \defeq \sum_{i\in [N]} p_i f_i(x),
\end{equation*}
which is equivalent to minimizing to $\F$. One can  obtain unbiased  estimators for $\grad F_{\mathrm{pd}}(x;p)$, and apply stochastic mirror descent to find its saddle-point~\cite{nemirovski2009robust,shalev2016minimizing,namkoong2016stochastic}. However, all known estimators for $\grad_p F_{\mathrm{pd}}$ have complexity-variance product $\Omega(N)$. Consequently, the best general guarantees known for such methods are $\Otil{N\epsilon^{-2}}$ iterations and total  complexity.\footnote{
Exact-gradient primal-dual methods such as mirror-prox~\cite{nemirovski2004prox} and dual-extrapolation~\cite{nesterov2007dual} have complexity guarantees scaling as $\Otil{N\epsilon^{-1}}$ under the stronger smoothness assumption $\Lg = O(1)$ \cite[cf.][Section 5.2.4]{bubeck2015convex}. 
}
 \citet{shalev2016minimizing} analyze a stochastic primal-dual method from an online learning perspective. They show that if the online method producing the primal updates admits a mistake bound (as is the case for learning halfspaces), then the complexity of the approach improves to $\Otil{N\epsilon^{-1}}$.  We show that adopting a primal-only perspective and iteratively restricting $x$ to a small ball (i.e., ``thinking inside the ball'') allows us to make better use of the scalability of stochastic gradient methods.

 \begin{table}[]
 	\captionsetup{font=small}
 	\centering
 	\begin{tabular}{@{}llll@{}}
 		\toprule
 		Smoothness                                       & Method                            & Upper bound                                 & Lower bound                                                 \\ \midrule
 		\multirow{2}{*}{None ($\Lg=\infty$)}             & Subgradient method                & $N\epsilon^{-2}$                           & \multirow{2}{*}{$N\epsilon^{-2/3} + \epsilon^{-2}$}         \\
 		& Ours                              & $N\epsilon^{-2/3} + \epsilon^{-8/3}$       &                                                             \\ \midrule
 		\multirow{2}{*}{Weak ($\Lg \approx 1/\epsilon$)} & AGD on softmax                    & $N\epsilon^{-1}$                           & \multirow{2}{*}{$N\epsilon^{-2/3} + \sqrt{N}\epsilon^{-1}$} \\
 		& Ours                              & $N\epsilon^{-2/3} + \sqrt{N}\epsilon^{-1}$ &                                                             \\ \midrule
 		Strong ($\Lg \ll 1/\epsilon$)                    & AGD on linearization* & $N\sqrt{\Lg\epsilon^{-1}}$       & $N\Lg^{1/3}\epsilon^{-1/3} + \sqrt{N\Lg\epsilon^{-1}}$      \\ \bottomrule
 	\end{tabular}
 	\caption{\label{table:summary} The complexity of solving the problem~\eqref{eq:problem} in terms of number of $(i,x)$ queries for computing and $f_i(x)$ and $\grad f_i(x)$. The tables assume each $f_i$ is convex, 1-Lipschitz and (optionally) has $\Lg$-Lipschitz gradient, and that $\F$ has a minimizer with norm at most 1. The stated rates omit constant and (in the upper bounds) polylogarithmic factors. *For this algorithm only, the computational complexity is not simply $d$ times the query complexity; see~\Cref{sec:app-discussion-nesterov}.}
 \end{table}

\subsection{Our contributions}
To motivate our developments, %
note that the general complexity guarantees described above all scale linearly with the number of functions $N$. 
On the one hand, this is to be expected, as even evaluating the maximum of $N$ numbers requires querying all of them. On the other hand, a linear scaling in $N$ stands in sharp contrast to guarantees for minimizing the \emph{average} of $N$ functions, which are typically sublinear in $N$. Since good scaling with dataset size is crucial in machine learning, we wish to precisely characterize the number of dataset passes (that is, the coefficient of $N$) in the complexity of minimizing $\F$. 
 
 Towards that end, we prove an oracle complexity lower bound. The bound shows that any algorithm that operates by repeatedly querying $i,x$ and observing $f_i(x), \grad f_i(x)$, must make $\Omega(N\epsilon^{-2/3})$ queries in order to solve problem~\eqref{eq:problem} for some convex, 1-Lipschitz problem instance $f_1,\ldots, f_N$ with domain in the unit ball. The same bound continues to hold even when constraining the $f_i$ to have $O(1/\epsilon)$-Lipschitz gradient, and when using high-order derivative oracles. This result further sharpens the contrast to average risk minimization, as it implies $\Omega(\epsilon^{-2/3})$ dataset passes are required in the worst case. However, it also suggests the potential for significant improvement over existing algorithms and their complexity bounds.

 We realize this potential with new algorithms whose leading complexity term in $N$ matches our lower bound up to polylogarithmic factors. In the non-smooth case, our approach solves~\eqref{eq:problem} with complexity $\Otil{N\epsilon^{-2/3} + \epsilon^{-8/3}}$, dominating prior guarantees for $N=\Omtil{\epsilon^{-2/3}}$. For $O(1/\epsilon)$-Lipschitz gradient functions, we obtain the stronger rate $\Otil{N\epsilon^{-2/3} + \sqrt{N}\epsilon^{-1}}$, which dominates prior guarantees for $N=\Omtil{1}$. 
 At the core of these algorithms is a technique for accelerated optimization given a ball optimization oracle~\cite{carmon2020acceleration}; we make several improvements to this technique, which may be of independent interest.

 \Cref{table:summary} summarizes our results and their comparison to prior art. In addition to the results described above, the table also contains lower bounds on sublinear terms in $N$ (that follow from standard arguments), as well as a lower bound for the smooth regime where $\Lg=o(1/\epsilon)$. In this regime there exists a gap between the linear terms in the upper and lower bounds.

 \subsection{Overview of techniques}

Our algorithms rely on a new technique introduced by~\citet{carmon2020acceleration} for acceleration with a ball optimization oracle (BOO). For any $r>0$ and $F:\R^d \to \R$, a BOO of radius $r$ takes  in a query point $\bx\in\R^d$ and returns an (approximate) minimizer of $F$ in a ball of radius $r$ around $\bx$. The technique, which is a variant of  Monteiro-Svaiter acceleration~\cite{monteiro2013accelerated,gasnikov19near,bubeck2019complexity,bullins2020highly},  minimizes $F$ to $\epsilon$ accuracy using $\Otil{(1/r)^{2/3}}$ oracle calls (with $\poly(\log(1/\epsilon))$ factors hidden). \citet{carmon2020lower}  apply their technique to the special case of~\eqref{eq:problem} with linear losses (see also~\Cref{sec:app-discussion-linear}), showing that the log-sum-exp function is quasi-self-concordant and implementing a BOO of radius $r=\Thetatil{\epsilon}$ using $\Otil{1}$ linear system solves. However, this approach does not extend to general $f_i$ because quasi-self-concordance no longer holds for $\Fsm$, which might not even be differentiable.

The main technical insight of our paper is that it is possible to efficiently implement a BOO of radius $\reps=\Thetatil{\epsilon}$ for $\Fsm$ using stochastic first-order methods. More precisely, for any $\bx \in\R^d$ we can minimize  $\Fsm$ in a ball of radius $\reps$ around $\bx$ to any $\poly(\epsilon)$ accuracy with precisely $N$ function evaluations and $\poly(1/\epsilon)$ (sub-)gradient evaluations. Using BOO acceleration, this immediately implies an $\Otil{N\epsilon^{-2/3} + \poly(1/\epsilon)}$ complexity bound exhibiting optimal $N$ dependence. 

To implement the BOO for $\Fsm$, we consider instead the ``exponentiated softmax'' function
\begin{equation*}
	\Gamma_{\epsilon}(x) = {\epsilon'} \cdot \exp\prn*{\frac{\Fsm(x)-\Fsm(\bx)}{{\epsilon'}}} = \sum_{i\in [N]} p_i {\epsilon'}  \cdot e^{ \frac{f_i(x)-f_i(\bx)}{{\epsilon'}}}
	~~\mbox{where}~~p_i = \frac{e^{f_i(\bx)/{\epsilon'}}}{\sum_{j\in[N]} e^{f_i(\bx)/{\epsilon'}}},
\end{equation*}
and $\epsilon'=\epsilon/(2\log N)$ as in~\cref{eq:softmax}.
Note that $\Gamma_\epsilon$ is a monotonically increasing transformation of $\Fsm$, and is therefore convex with the same minimizer as $\Fsm$. Moreover, it is a (weighted) finite sum,
and consequently amenable to stochastic gradient methods. It remains to verify that the functions $\xi_i(x) = \epsilon' \cdot e^{ ({f_i(x)-f_i(\bx)})/{\epsilon'}}$ are well-behaved, which might look difficult since exponentials are notoriously unstable. However, our choice of $r$ and Lipschitz continuity of $f_i$ implies that $e^{ \prn{f_i(x)-f_i(\bx)}/{\epsilon}} = \Theta(1)$ inside the ball, and consequently $\xi_i$ is indeed well-behaved, with Lipschitz constant $O(1)$. We thus minimize $\Gamma_\epsilon$ (and hence $\Fsm$) with stochastic gradient descent~\cite{hazan2014beyond}, sampling $i$ from $p$. Moreover, if $\grad f_i$ are Lipschitz, then $\grad \xi_i$ are also Lipschitz, and we apply an accelerated variance reduction method~\cite{allen2016katyusha} for better efficiency.
		
To complete the analysis of our methods it remains to determine how accurately we need to solve each ball subproblem. Unfortunately, the analysis of \cite{carmon2020acceleration} makes fairly stringent accuracy requirements, and also requires $\grad \Fsm$ to have a finite Lipschitz constant. To obtain tighter guarantees, we significantly rework the analysis in \cite{carmon2020acceleration}, modifying the algorithm to make it applicable  without any differentiablility requirements. Our improved analysis takes into account the fact that the acceleration scheme only requires ball minimization with strong $\ell_2$ regularization, which further improves the oracle implementation  complexity.

Our lower bound follows from a variation on the classical ``chain constructions'' in optimization lower bounds~\cite{nemirovski1983problem,woodworth2016tight,guzman2015lower,diakonikolas2020lower}, where in order to make a unit of progress on our constructed function, any algorithm must (with constant probability) make $\Omega(N)$ queries in order to discover a single new link in the chain.
We build a chain of length $\Omega(\epsilon^{-2/3})$ for which querying any $\epsilon$ minimizer of $\F$ requires discovering the entire chain, giving the $\Omega(N\epsilon^{-2/3})$ complexity lower bound. To prove this result for arbitrary randomized algorithms, we randomize both the order of the functions and the rotation of the domain. 

\paragraph{Paper outline.} 
\Cref{sec:prelims} provides some additional preliminaries and notation. \Cref{sec:ms-bacon-redux} gives our improved derivation of the BOO acceleration method of~\cite{carmon2020acceleration}, and \Cref{sec:boo-implementation} develops a BOO for $\Fsm$, culminating in our upper complexity bounds for the problem~\eqref{eq:problem}, stated in  \Cref{thm:ub}. \Cref{sec:lb} gives our lower bounds with the main result stated in \Cref{thm:lb}.
\section{Preliminaries}\label{sec:prelims}

\paragraph{General notation.} 
Throughout, $\norm{\cdot}$ denotes the Euclidean norm. We write $\ball_r(z)$ for the Euclidean ball of radius $r$ centered at $z$, and $\ball_r^d(z)$ when emphasizing that the ball is $d$-dimensional. We use $\Lf$ to denote a function Lipschitz constant and $\Lg$ to denote a gradient Lipschitz constant; we say that $f$ is $\Lg$-smooth if it has $\Lg$-Lipschitz gradient. 
To disambiguate between sequence and coordinate indices, in \Cref{sec:lb} we denote the former with normal subscript and the latter with bracketed subscript, i.e., $x\coind{i}$ is the $i$th coordinate of $x$ and $x_k$ is the $k$th element in the sequence $x_1, x_2, \ldots$. We also write $v\coind{\le i}$ to denote a copy of $v$ with coordinate $i+1, i+2, \ldots$ set to zero. 
We use $a\wedge b \defeq \min\{a,b\}$ to abbreviate binary minimization. We write the binary indicator of event $A$ as $\indic{A}$.

\paragraph{Complexity model.}
We mainly measure complexity through the number individual function and gradient evaluations required to solve the problem~\eqref{eq:problem}. We write $\Tf$ for the cost of evaluating $f_i(x)$ for a single $i$ and $x$, and similarly write $\Tg$ for the cost of evaluating $\grad f_i(x)$. Assuming $\Tf, \Tg = \Omega(d)$, our evaluation complexity upper bounds translate directly to runtime upper bounds.

\paragraph{Proximal operators.}
For any function $f$ and regularization parameter $\lambda \ge 0$, we define the standard proximal mapping $\prox{f}(\bx) \defeq \argmin_{x\in\R^d} \crl*{
	f(x) + \frac{\lambda}{2} \norm{x-\bar{x}}^2}$. We also define the ball constrained proximal mapping 
$\bprox{f}(\bx) \defeq \argmin_{x\in\ball_r(\bx)} \crl*{
	f(x) + \frac{\lambda}{2} \norm{x-\bar{x}}^2}$. Finally, we define the notion of an approximate oracle for $\bprox{f}$, which plays a key role in our analysis.

\begin{definition}[\BOO]\label{def:broo}
	We say that a mapping $\oracle{\cdot}$ is a Ball Regularized Optimization Oracle of radius $r$ ($r$-\BOO) for $f$, if for every query point $\bx$, regularization parameter $\lambda$ and desired accuracy $\delta$, it return $\tilde{x} = \oracle{\bx}$ satisfying
\begin{equation}\label{eq:broo-req}
		f(\tilde{x})+\frac{\lambda}{2}\|\tilde{x}-\bx\|^2\le \min_{x \in \ball_{r}(\bx)} \crl*{
		f(x) + \frac{\lambda}{2} \norm{x-\bar{x}}^2}+\frac{\lambda}{2}\delta^2.
\end{equation}
\end{definition}
\noindent
Note that when $f$ is convex, the strong convexity of  $f(x)+\frac{\lambda}{2}\norm{x-\bx}^2$ and the approximation requirement~\eqref{eq:broo-req} guarantee that $\norm{\oracle{\bx}-\bprox{f}(\bx)}\le \delta$.

\section{\BOO acceleration}\label{sec:ms-bacon-redux}
In this section, we describe a variant of the ball optimization acceleration scheme of \citet{carmon2020acceleration}, given as \Cref{alg:ms-bacon-redux}. 
Both methods follow the template of Monteiro-Svaiter acceleration~\cite{monteiro2013accelerated}, but our algorithm improves on  \cite{carmon2020acceleration} in two ways. First, it accesses the objective strictly through the ball oracle, while \cite{carmon2020acceleration} also uses gradient computations. Second, our algorithm requires an oracle that solves \emph{regularized} ball optimization problems, which are easier to implement.\footnote{We note that $\lambda$ in our notation corresponds to $1/\lambda$ in the notation of~\cite{carmon2020acceleration}.}

As a consequence of these differences, our accelerated algorithm's guarantee does not require any smoothness of the objective function. Moreover, our setup allows for far less accurate solutions to the ball optimization subproblems: \citet{carmon2020acceleration} require $\delta = O(\frac{\epsilon}{\Lg R})$ while we only require $\delta = O(\frac{\epsilon}{\lambda R})$. 
While our requirement becomes stricter as the regularizer $\lambda$ grows, it also becomes easier to fulfill since the ball optimization problem becomes more strongly convex and hence easier to solve. Our relaxed accuracy requirement ultimately translates to an improved $\epsilon^{-1}$ dependence in the sublinear-in-$N$ term in our upper bound.

With the key innovations of~\Cref{alg:ms-bacon-redux} explained, we now formally state its convergence guarantee; we defer the proof to  \Cref{sec:ms-bacon-redux-proofs}.

\newcommand{\bisectionTarget}{\Delta}
\newcommand{\idealBisectionTarget}{\widehat{\Delta}}
\newcommand{\xret}{x_{\mathrm{ret}}}

\begin{algorithm2e}[htb!]
\setstretch{1.1}
\caption{\BOO acceleration}
\label{alg:ms-bacon-redux}
\LinesNumbered
\DontPrintSemicolon
\SetKwRepeat{Do}{do}{while}
\KwInput{Initial $x_0\in \R^d$,  Lipschitz and distance bounds $\Lf$, $R$, $r$,  accuracy $\epsilon$, \BOO $\oracle{\cdot}$ %
}
\KwOutput{$\xret$ such that $f(\xret) -  \argmin_{z \in \ball_R(x_0)} f(z) \leq \epsilon$ 
}
\SetKwFunction{Bisection}{$\linesearch$}
\SetKwProg{Fn}{function}{}{}

\BlankLine

Let $v_0 = x_0$, $A_0 = 0$\;
\For{$t=0,1,2,\ldots$}{
	
	$\lambda_{t+1} =
	\linesearch(x_t,v_t,A_t, \lambda_{\max}=\tfrac{2\Lf}{r}, \lambda_{\min}=\tfrac{\eps}{6rR})$ \label{line:lambda_min}\;
	
	\Comment*[f]{Finds $\lambda_{t+1}$ such that $x_{t+1}\approx \prox[\lambda_{t+1}]{f}(y_t)$ and either
		$\norm{x_{t+1}-y_t} \approx  r$ or $x_{t+1}$ is $\epsilon$-optimal}
	
	$a_{t+1} = \tfrac{1}{2\lambda_{t+1}}(1 + \sqrt{1 + 4 \lambda_{t+1} A_t})$ and  $A_{t+1} = A_t + a_{t+1}$ \label{line:at-def} \Comment*{$A_{t+1} = a_{t+1}^2 \lambda_{t+1}$}
	
	$y_{t}=\frac{A_t}{A_{t+1}}x_{t}+\frac{a_{t+1}}{A_{t+1}}v_{t}$\;
	
	$x_{t+1} = \oracle[\lambda_{t+1},\delta_{t+1}]{y_t}$, where $\delta_{t+1} = \frac{\eps}{12 \lambda_{t+1} R}$\;	
	
	$v_{t+1}= \argmin_{v \in \ball_R(x_0)}\left\{a_{t+1} \lambda_{t+1}\left\langle y_{t}-x_{t+1},v \right\rangle +\frac{1}{2}\norm{ v-v_{t}} ^{2}\right\}$
	\label{line:v-update}
	\;
	
	\If{$A_{t + 1} \geq \frac{R^2}{\eps}$, $\lambda_{t + 1} \leq \frac{\eps}{3 r R}$, $\norm{x_{t +1} - v_{t + 1}} > 2R$, \textbf{\textup{or}}  $A_{t + 1} < \exp\left(\frac{r^{2/3}}{R^{2/3}}(t-1)\right) A_1$ \label{line:outer_check}}{
		\Return $\xret \in \argmin_{x\in\crl{x_0,x_1,\ldots,x_{t+1}}} f(x) $ \label{line:outer_return}\;
	}
}

\BlankLine

	\Fn{$\linesearch(x,v,A,\lambda_{\max}, \lambda_{\min})$ %
	}{
	For all $\lambda'$, let $y_{\lambda'} \defeq 
	\alpha_{2A\lambda'} \cdot x + (1-\alpha_{2A\lambda'}) \cdot v$, where
	$\alpha_\tau \defeq \frac{\tau}{1+\tau+\sqrt{1 + 2\tau}}$ 
	
	Define $\bisectionTarget(\lambda) \defeq \norm{\oracle[\lambda, \frac{r}{17}]{y_\lambda} - y_\lambda}$ 
	\Comment*{approximation of $\idealBisectionTarget(\lambda) \defeq \norm{ \bprox[\lambda,r]{f}(y_\lambda) - y_\lambda}$}

	Let $\lambda = \lambda_{\max}$ 
	
	\lWhile{$\lambda \geq \lambda_{\min}$ \textup{\textbf{and}}  $\bisectionTarget(\lambda) \leq \frac{13r}{16}$  \label{line:while1start}}{
		$\lambda \gets \lambda/2$
		\Comment*[f]{terminates in $O(\log\frac{\lambda_{\max}}{\lambda_{\min}})$ steps}
	}\label{line:while1end}
	\lIf{$\lambda \leq \lambda_{\min}$}{\Return $2\lambda$ \label{line:ls_lower_boundary}
	\Comment*[f]{happens only if $\bprox[2\lambda,r]{f}(y_{2\lambda})$ is $O(\epsilon)$-optimal for small $\lambda_{\min}$}
}
	
	Let $\lambda_u = 2\lambda$, $\lambda_{\ell} = \lambda$ and $\lambda_m = \sqrt{\lambda_u \lambda_{\ell}}$
	
	\lIf{$\bisectionTarget(\lambda_{\ell}) \leq  \frac{15r}{16}$}{\Return $\lambda_{\ell}$
	\Comment*[f]{happens only if $\bisectionTarget(\lambda_{\ell}) \in [\frac{13r}{16}, \frac{15r}{16}]$}
	}\label{line:stop-middle}
		
	\While{$\bisectionTarget(\lambda_m)\notin [\frac{13r}{16}, \frac{15r}{16}]$ \textup{\textbf{and}} $\log_2 \frac{\lambda_u}{\lambda_{\ell}} \ge  \frac{r}{8(R+\Lf/\lambda_{\ell})}$ \label{line:while2start} } {
	\leIf{
		$\bisectionTarget(\lambda_m) < \frac{13r}{16}$}{$\lambda_u=\lambda_m$}{$\lambda_\ell = \lambda_m$}
	
	$\lambda_m = \sqrt{\lambda_u \lambda_{\ell}}$
} 	\label{line:while2end} 

	\Return $\lambda_m$ \Comment*[f]{
		the while loop terminates in $O\prn[\big]{\log\prn[\big]{\frac{R}{r} + \frac{\Lf}{\lambda_{\min} r}}}$ steps}
	
}
\end{algorithm2e}

\newcommand{\fancyind}[1]{_{(#1)}}
\begin{restatable}{theorem}{thmMSBaconRedux}\label{thm:ms-bacon-redux}
Let $f:\R^d\to \R$ be convex and $\Lf$-Lipschitz, and let $z \in \R^d$. For any domain bound $R>0$, ball radius $r\in(0,R]$, accuracy level $\epsilon>0$, and initial point $x_0\in\R^d$, \Cref{alg:ms-bacon-redux} returns a point $x\in\R^d$ satisfying $f(x)-\min_{z\in\ball_R(x_0)}f(z) \le \epsilon$ using at most
\begin{equation*}
	T = O\prn*{
		\prn*{ \frac{R}{r}}^{2/3}
		\log  \prn*{\frac{[f(x_0) - \min_{z\in\ball_R(x_0)}f(z)] R}{ \epsilon r} }
		 \log \prn*{\frac{\Lf R^2}{\epsilon r}}
	}
\end{equation*}
queries to an $r$-\BOO. Moreover, the \BOO query parameters  $(\lambda\fancyind{1}, \delta\fancyind{1}), \ldots, (\lambda\fancyind{T}, \delta\fancyind{T})$ satisfy
\begin{enumerate}
	\item \label{item:lambda-bounds} $\Omega(\frac{\epsilon}{rR}) \le \lambda\fancyind{i} \le O(\frac{\Lf }{r})$  and $\delta\fancyind{i} \ge \Omega( \frac{\epsilon}{\lambda\fancyind{i} R} )$ for all $i \in [T]$.
	\item \label{item:lambda-sum-bound} $\sum_{i \in [T]} \frac{1}{\sqrt{\lambda\fancyind{i}}} \leq O\prn[\big]{ \frac{R}{\sqrt{\eps}}\log\frac{\Lf R^2}{\eps r}}.$ %
\end{enumerate}
\end{restatable}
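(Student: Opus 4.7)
The proof adapts the Monteiro-Svaiter acceleration framework to rely on the inexact regularized ball-prox oracle (the \BOO), permitting $f$ to be non-smooth. I would track the potential $\Phi_t := A_t(f(x_t) - f(z_\star)) + \tfrac{1}{2}\|v_t - z_\star\|^2$ where $z_\star := \argmin_{z \in \ball_R(x_0)} f(z)$, and argue that (i) $\Phi_t$ remains approximately bounded by $\Phi_0 \le \tfrac{1}{2} R^2$ and (ii) $A_t$ grows by at least a factor $1 + \Omega((r/R)^{2/3})$ per outer iteration when the bisection succeeds. Combining (i) and (ii) gives $f(x_T) - f(z_\star) \le \epsilon$ once $A_T \gtrsim R^2/\epsilon$, which occurs within the iteration bound claimed.

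\textbf{Line search.} The ideal displacement $\idealBisectionTarget(\lambda) = \|\bprox[\lambda,r]{f}(y_\lambda) - y_\lambda\|$ is nonincreasing in $\lambda$ (greater regularization contracts the prox step), and the \BOO accuracy $r/17$ gives $|\bisectionTarget(\lambda) - \idealBisectionTarget(\lambda)| \le r/17$. The halving loop exits when $\bisectionTarget(\lambda) \le 13r/16$, bracketing the relevant crossover in $[\lambda, 2\lambda]$. The geometric bisection either finds $\lambda_m$ with $\bisectionTarget(\lambda_m) \in [13r/16, 15r/16]$ in $O(\log(R/r + \Lf/(\lambda_{\min} r)))$ steps, or the window narrows to a factor within which Lipschitz continuity of $\lambda \mapsto y_\lambda$ (through $\alpha_\tau$) plus $\Lf$-Lipschitzness of $f$ force $\bprox{f}(y_\lambda)$ to vary by only $O(r)$, yielding approximate success. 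Two edge cases: halving down to $\lambda_{\min}$ signals that even at negligible regularization the prox step remains strictly inside the ball, so the current point is already within $O(\epsilon)$ of $\min_{z \in \ball_R(x_0)} f$; and early success on line 15. In every successful case $\|x_{t+1} - y_t\| = \Theta(r)$.

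\textbf{Potential analysis, progress, items 1--2.} A standard Monteiro-Svaiter calculation -- using the $v_{t+1}$-update on line 8, the interpolation identity $A_{t+1} y_t = A_t x_t + a_{t+1} v_t$, convexity of $f$, and the approximate first-order inequality
\[
\lambda_{t+1} \inner{y_t - x_{t+1}}{x_{t+1} - u} \ge f(x_{t+1}) - f(u) - O(\lambda_{t+1} \delta_{t+1} R) \quad \forall\, u \in \ball_R(x_0) \cap \ball_r(y_t),
\]
which follows from \Cref{def:broo} -- yields a per-iteration potential increase of $O\bigl(\epsilon \cdot (\sqrt{A_{t+1}} - \sqrt{A_t}) / \sqrt{A_T}\bigr)$, telescoping to $\Phi_T \le \Phi_0 + O(\epsilon)$ and hence $f(x_T) - f(z_\star) \le \epsilon$ once $A_T \ge R^2/\epsilon$. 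When the bisection succeeds, $\lambda_{t+1} \|x_{t+1} - y_t\|^2 = \Theta(\lambda_{t+1} r^2)$ combined with $A_{t+1} = a_{t+1}^2 \lambda_{t+1}$ and $\|v_t - x_0\| \le R$ yields $A_{t+1}/A_t \ge 1 + \Omega((r/R)^{2/3})$, which is precisely the growth check on line 9. The other termination conditions each imply $\epsilon$-optimality through separate but analogous arguments based on the bisection's boundary output. Item 1 follows from the $\linesearch$ bounds $\lambda \in [\lambda_{\min}, 2\lambda_{\max}]$ combined with $\delta_{t+1} = \epsilon/(12\lambda_{t+1} R)$. Item 2 follows from the recursion $a_{t+1} = \sqrt{A_{t+1}/\lambda_{t+1}}$, which gives $\sqrt{A_{t+1}} - \sqrt{A_t} \ge \Omega(1/\sqrt{\lambda_{t+1}})$ and hence $\sum_i 1/\sqrt{\lambda_i} \le O(\sqrt{A_T}) = O(R/\sqrt{\epsilon})$ up to the logarithmic factor from the explicit growth check.

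\textbf{Main obstacle.} The trickiest step is the line-search analysis: simultaneously handling the non-monotonicity of $\bisectionTarget$ under oracle noise, the Lipschitz-window exit on line 18, and the boundary case $\lambda = \lambda_{\min}$, while ensuring the returned $(\lambda_{t+1}, x_{t+1})$ is good enough for both $A$-growth and potential monotonicity. A secondary subtlety is propagating the relaxed accuracy $\delta_{t+1} = O(\epsilon/(\lambda_{t+1} R))$ through the potential analysis without a smoothness crutch: the naive per-iteration error budget is $O(\epsilon)$ rather than $O(\epsilon/T)$, so the argument must exhibit telescoping through the $\sqrt{A_t}$ recursion rather than summing errors linearly -- this is precisely what enables the improvement over \cite{carmon2020acceleration} and the sublinear-in-$N$ savings in the final complexity bound.
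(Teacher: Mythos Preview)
Your high-level structure matches the paper's: a Monteiro--Svaiter potential $P_t = A_t(f(x_t)-f(z_\star)-\tfrac{\eps}{4}) + \tfrac12\|v_t-z_\star\|^2$, a bisection to locate $\lambda_{t+1}$ making the prox step roughly radius $r$, and exponential growth of $A_t$ at rate $(r/R)^{2/3}$. But the bisection analysis contains a genuine gap.

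\textbf{The monotonicity claim is false.} You assert that $\idealBisectionTarget(\lambda)=\|\bprox[\lambda,r]{f}(y_\lambda)-y_\lambda\|$ is nonincreasing in $\lambda$ because ``greater regularization contracts the prox step.'' That would be true for a \emph{fixed} center $y$, but here $y_\lambda=\alpha_{2A\lambda}x+(1-\alpha_{2A\lambda})v$ moves from $v$ toward $x$ as $\lambda$ grows, and there is no reason the composite map should be monotone. The paper neither claims nor uses monotonicity. Instead it works with the \emph{unconstrained} displacement $\hDl=\|\prox{f}(y_\lambda)-y_\lambda\|$ and proves a quantitative Lipschitz bound
\[
\abs{\hDl[\lambda_1]-\hDl[\lambda_2]}\le \prn*{R+\tfrac{\Lf}{\lambda_1}}\log\tfrac{\lambda_2}{\lambda_1},
\]
which is what makes the second while-loop terminate. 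Establishing this bound is the main technical work of the bisection section: one differentiates $\delta_\lambda=\prox{f}(y_\lambda)-y_\lambda$ in $\lambda$, obtaining $\|\tfrac{d}{d\lambda}\delta_\lambda\|\le\|\tfrac{d}{d\lambda}y_\lambda\|+\tfrac{1}{\lambda}\|\delta_\lambda\|$ via the implicit relation $\nabla f(\hat x_\lambda)+\lambda\delta_\lambda=0$ and a Hessian computation. For non-smooth $f$ this requires a smoothing argument (convolve with a Gaussian, take the limit). Your one-line appeal to ``Lipschitz continuity of $\lambda\mapsto y_\lambda$ plus $\Lf$-Lipschitzness of $f$'' does not capture this: Lipschitzness of $f$ alone does not give Lipschitzness of $\lambda\mapsto\prox{f}(y)$ for fixed $y$; you need the prox-derivative analysis. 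Without it, the bracket $[\lambda_\ell,\lambda_u]$ can in principle oscillate forever without $\Delta(\lambda_m)$ landing in $[\tfrac{13r}{16},\tfrac{15r}{16}]$, and you have no termination guarantee.

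\textbf{Minor issue in the potential bookkeeping.} Your claimed per-step error $O(\eps\cdot(\sqrt{A_{t+1}}-\sqrt{A_t})/\sqrt{A_T})$ does not arise naturally (and invokes $A_T$ before it is defined). The paper's accounting is simpler: the oracle inaccuracy contributes $a_{t+1}\cdot\tfrac{\eps}{4}=(A_{t+1}-A_t)\tfrac{\eps}{4}$ to the potential, which is absorbed by shifting to $\hat E_t=E_t-\tfrac{\eps}{4}$. The total error is thus $A_T\cdot\tfrac{\eps}{4}$, not $O(\eps)$, but this is harmless since one divides by $A_T$ at the end. No $\sqrt{A_t}$ telescoping is needed here; that recursion is used separately (via \Cref{lemma:ak_bounds} and a reverse H\"older inequality) to establish the exponential growth $A_t\ge e^{(r/R)^{2/3}(t-1)}A_1$, which you state but do not derive.
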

We remark that Theorem~\ref{thm:ms-bacon-redux} requires a bound on the Lipschitz constant of $f$ solely to bound the complexity of the bisection procedure for finding $\{\lambda_{t}\}$. 
\section{\BOO implementation}\label{sec:boo-implementation}
In this section, we develop efficient \BOO implementations for $\Fsm$, the softmax approximation of $\F$~\eqref{eq:softmax}. In \Cref{ssec:approx} we develop our main analytical tool in the form of an ``exponentiated softmax'' function approximating $\Fsm$ and facilitating efficient stochastic gradient estimation. We then minimize the exponentiated softmax with standard tools from stochastic convex optimization. In  \Cref{ssec:sgd} we give a \BOO implementation for the non-smooth case using restarted SGD~\cite{hazan2014beyond}. In \Cref{ssec:asvrg} we instead apply an accelerated variance reduction method (Katyusha~\cite{allen2016katyusha}) that offers improved performance when the $f_i$ are even slightly smooth. Finally, in \Cref{ssec:ub-statement} we combine our \BOO implementations with \Cref{alg:ms-bacon-redux} and its guarantees to obtain our main results: new convergence guarantees for minimizing $\F$. We defer proofs to \Cref{sec:boo-impl-proofs}.

\subsection{Exponentiating a softmax}\label{ssec:approx}
Recall that $\epsilon' = \epsilon/(2\log N)$ and that (for nominal accuracy $\epsilon$) the softmax function $\Fsm(x) = \epsilon' \log\prn*{ \sum_{i\in[N]} e^{f_i(x)/\epsilon'}}$ approximates $\F$ to within $\epsilon/2$ additive error.
The key challenge in designing an efficient stochastic method for minimizing $\Fsm$ is a lack of cheap unbiased gradient estimators. Specifically, we have 
$\grad \Fsm(x) = \sum_{i\in [N]} p_i(x) \grad f_i(x)$, where
\begin{equation}\label{eq:softmax-prob}
	p_i(x) = \frac{e^{f_i(x)/\epsilon'}}{\sum_{j\in [N]} e^{f_j(x)/\epsilon'}}.
\end{equation}
Given access to $p(x)$, we could easily obtain an unbiased estimator for $\grad \Fsm(x)$ by sampling $i\sim p(x)$ and outputting $\grad f_i(x)$. However, computing $p(x)$ itself requires evaluating all $N$ functions, making it basically as costly as computing $\grad \Fsm$ exactly.

This difficulty, however, is greatly relieved when we operate in a small ball of radius $\reps = \epsilon' / \Lf$ centered at some point $\bx$. To see why, note that for every $i$ and every  $x\in\ball_{\reps}(\bx)$, Lipschitz continuity of $f_i$ implies  $| f_i(x)/\epsilon' -f_i(\bx)/\epsilon' | \le  \Lf \reps / \epsilon' = 1$. Consequently, $p(\bx)$ is a multiplicative approximation for $p(x)$ throughout the ball, satisfying $e^{-2} p_i(\bx) \le p_i(x) \le e^2 p(\bx)$ for all $x\in \ball_{\reps}(\bx)$. Our high-level strategy is thus: perform a full data pass \emph{once} to compute $p(\bx)$, and then rely on the stability of $p(x)$ within $\ball_{\reps}(\bx)$ to efficiently estimate gradients by sampling from $p(\bx)$.
However, simply sampling $i\sim p(\bx)$ and returning $\grad f_i(x)$ is not enough, because it leads to a biased estimator of $\grad \Fsm(x)$. Instead, we define below a surrogate function ``exponentiating the softmax''  that closely approximates $\Fsm$ and for which $e^{(f_i(x)-f_i(\bx))/\epsilon'} \grad f_i(x)$ is an unbiased gradient estimator when $i\sim p(\bx)$.\footnote{We remark that $g_i(x) = e^{(f_i(x)-f_i(\bx))/\epsilon'} \grad f_i(x)$ is also nearly unbiased for $\Fsm$ in the sense that $\E g_i(x) = Z(x) \grad \Fsm(x)$ for some $Z(x)$ that is close to 1 when inside $\ball_{\reps}(\bx)$. Estimators of this form suffice for SGD, but are less amenable to variance reduction.}

To precisely define the surrogate ``exponentiated softmax'' function, we require some additional notation. Fixing a ball center $\bx$ and regularization parameter $\lambda$, let
\begin{equation*}
	f_i^\lambda(x) \defeq f_i(x) + \frac{\lambda}{2}\norm{x-\bx}^2
	~~\mbox{and}~~
	\Fsm^\lambda(x) \defeq \Fsm(x) + \frac{\lambda}{2}\norm{x-\bx}^2
	= \epsilon' \log\prn*{ \sum_{i\in[N]} e^{f_i^\lambda(x)/\epsilon'}}
\end{equation*}
be the regularized counterparts of $f_i$ and $\Fsm$, respectively. Then, we define the exponentiated softmax as
\begin{equation}
	\label{eq:bare-exp-def}
	\beF(x) = {\epsilon'} \cdot \exp\prn*{\frac{\Fsm^{\lambda}(x)-\Fsm^{\lambda}(\bx)}{{\epsilon'}}} =  \sum_{i\in [N]} p_i(\bx) \gamma_i(x)~\mbox{where}~
	\gamma_i(x)\defeq \epsilon' e^{ \frac{f_i^\lambda(x)-f_i^{\lambda}(\bx)}{{\epsilon'}}}.
\end{equation}
Clearly, $\beF$ is a finite sum objective (weighted by $p(\bx)$), making stochastic first-order methods applicable. Moreover, as the following lemma shows, when the ball radius $r$ and $\lambda$ are not too large, $\beF$ closely approximates $\Fsm^\lambda$ and is as regular as $\Fsm^\lambda$ up to a constant.

\begin{restatable}{lem}{beFapprox}\label{lem:bef-approx}
Let $f_1,\cdots, f_N$ each be $\Lf$-Lipschitz and $\Lg$-smooth gradients. For any $c>0$, $r \le c \eps' / \Lf$, and $\lambda \le c\Lf/r$ let $C = (1+c+c^2)e^{c+c^2/2}$. The exponentiated softmax $\beF$ satisfies the following properties for any $\bx\in\R^d$.
\begin{enumerate}
\item \label{item:bef-error-bound} $\Fsm^{\lambda}(x)$ and $\beF$ have the same minimizer $\xopt$ in $\ball_r(\bx)$. Moreover, for every $x\in\ball_r(\bx)$,
\[\Fsm^{\lambda}(x) - \Fsm^{\lambda}(\xopt) \le C ( \beF(x) - \beF(\xopt) ).\]
\item  \label{item:bef-reg-bound} Restricted to $\ball_r(\bx)$, each function   $\gamma_i$ defined in~\eqref{eq:bare-exp-def} is $C \Lf$-Lipschitz, $C^{-1} \lambda$ strongly convex, and $C(\Lg + \lambda + \Lf^2 / \epsilon')$-smooth.
\end{enumerate}
\end{restatable}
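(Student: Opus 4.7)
The plan is to reduce everything to a two-sided multiplicative bound on the exponential factor $e^{(f_i^\lambda(x) - f_i^\lambda(\bx))/\epsilon'}$ inside the ball. Since $f_i$ is $\Lf$-Lipschitz we have $|f_i(x) - f_i(\bx)| \le \Lf r \le c\epsilon'$, and the regularizer contributes $\tfrac{\lambda}{2}\|x - \bx\|^2 \le \tfrac{c^2\epsilon'}{2}$; together they give $|f_i^\lambda(x) - f_i^\lambda(\bx)|/\epsilon' \le c + c^2/2$, so the exponential lies in $[e^{-(c+c^2/2)}, e^{c+c^2/2}]$ throughout $\ball_r(\bx)$. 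A convexity argument on the log-sum-exp form of $\Fsm^\lambda$ delivers the same range for $(\Fsm^\lambda(x) - \Fsm^\lambda(\bx))/\epsilon'$.

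For item~\ref{item:bef-error-bound}, I would observe that $\beF$ is the composition of $\Fsm^\lambda$ with the strictly increasing map $t \mapsto \epsilon' e^{(t - \Fsm^\lambda(\bx))/\epsilon'}$, so the two functions share the unique minimizer $\xopt$ of $\Fsm^\lambda$ on $\ball_r(\bx)$ (existence and uniqueness follow from strong convexity of $\Fsm^\lambda$). To bound the gap, I would write $\Fsm^\lambda(x) - \Fsm^\lambda(\xopt) = \epsilon' \log(\beF(x)/\beF(\xopt))$ and apply $\log(1+z) \le z$ for $z \ge 0$ to obtain $\Fsm^\lambda(x) - \Fsm^\lambda(\xopt) \le \epsilon'(\beF(x) - \beF(\xopt))/\beF(\xopt)$. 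The preliminary bound yields $\beF(\xopt) \ge \epsilon' e^{-(c+c^2/2)}$, so the constant is at most $e^{c+c^2/2} \le C$.

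For item~\ref{item:bef-reg-bound}, I would differentiate directly to obtain
\[
\grad \gamma_i(x) = e^{(f_i^\lambda(x) - f_i^\lambda(\bx))/\epsilon'} \grad f_i^\lambda(x), \quad
\hess \gamma_i(x) = e^{(f_i^\lambda(x) - f_i^\lambda(\bx))/\epsilon'} \Bigl[ \hess f_i^\lambda(x) + \tfrac{1}{\epsilon'} \grad f_i^\lambda(x) \grad f_i^\lambda(x)^\top \Bigr].
\]
In the ball $\|\grad f_i^\lambda(x)\| \le \Lf + \lambda r \le (1+c)\Lf$, $\hess f_i^\lambda(x) \succeq \lambda I$ by convexity of $f_i$ together with the quadratic regularizer, and $\hess f_i^\lambda(x) \preceq (\Lg + \lambda)I$ by smoothness of $f_i$. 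Combined with the two-sided exponential bound, these yield Lipschitz constant at most $e^{c+c^2/2}(1+c)\Lf \le C\Lf$, strong convexity at least $e^{-(c+c^2/2)}\lambda \ge \lambda/C$, and smoothness at most $e^{c+c^2/2}\bigl[(1+c)^2 \Lf^2/\epsilon' + \Lg + \lambda\bigr]$, which fits within the claimed $C$-scale bound after absorbing $(1+c)^2$ into the constant.

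I do not foresee a serious obstacle; the argument is a direct chain of elementary exponential and Lipschitz estimates. The only places that require mild care are the global lower bound on $\beF(\xopt)$ used in item~\ref{item:bef-error-bound} and careful tracking of which powers of $(1+c)$ and $e^{c+c^2/2}$ combine into the single constant $C$ across all three regularity parameters.
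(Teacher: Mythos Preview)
Your approach is essentially the same as the paper's: bound the exponential factor by $e^{\pm(c+c^2/2)}$ via Lipschitz continuity plus the quadratic term, then differentiate $\gamma_i$ directly. Your organization is arguably cleaner, since you work with $f_i^\lambda$ throughout rather than splitting the regularizer off as the paper does.

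One small wrinkle worth flagging: in the smoothness bound you end with the factor $(1+c)^2 e^{c+c^2/2}$ on $\Lf^2/\epsilon'$ and assert this ``fits within the claimed $C$-scale bound after absorbing $(1+c)^2$ into the constant.'' That does not quite hold for the specific $C=(1+c+c^2)e^{c+c^2/2}$ stated in the lemma, because $(1+c)^2=1+2c+c^2>1+c+c^2$ for $c>0$, and the slack in the $\Lg$ and $\lambda$ terms cannot absorb the excess when those parameters are small. The paper's own proof handles the outer product by writing $H_i$ with $\tfrac{1}{\epsilon'}\grad f_i\grad f_i^\top+\tfrac{\lambda^2}{\epsilon'}(x-\bx)(x-\bx)^\top$ separately (giving coefficient $1$ on $\Lf^2/\epsilon'$ and $1+c^2$ on $\lambda$, both $\le 1+c+c^2$), though in doing so it silently drops the cross term $\tfrac{\lambda}{\epsilon'}[\grad f_i(x-\bx)^\top+(x-\bx)\grad f_i^\top]$, so neither derivation is airtight for the exact stated constant. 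This affects only the numerical value of $C$, not the substance; a slightly larger constant (or a more careful split of the cross term between $\lambda$ and $\Lf^2/\epsilon'$) fixes it.
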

\noindent
The proof of \Cref{lem:bef-approx} follows from a straightforward calculation, and we defer it to \Cref{ssec:bef-approx-proof}.

\subsection{The non-smooth case: SGD implementation}\label{ssec:sgd}

To take advantage of the strong convexity of of $\beF$ we use the restarted SGD variant of~\citet{hazan2014beyond}, which finds an $\varepsilon$-suboptimal point of a $G$-Lipschitz and $\mu$-strongly convex function with $\Otil{G^2 / (\mu \varepsilon)}$ iterations (with high probability). To estimate the stochastic gradients, we sample $i\sim p(\bx)$ and output $\grad \gamma_i(x)$; this takes $O(\Tg + \Tf)$ time per stochastic gradient, plus $O(N\Tf)$ preprocessing time to compute $p(\bx)$.
We provide pseudocode for the algorithm in \Cref{ssec:sgd-app}, where we also prove the following complexity bound.

\begin{restatable}{corollary}{corsgd}\label{cor:sgd}
	Let $f_1,f_2,\cdots,f_N$ be $L_f$ Lipschitz, let $\sigma\in(0,1)$, $\epsilon, \delta > 0$ and $\reps =  \eps/(2\log N\cdot\lip)$. 
	For any $\bx\in\R^d$ and $\lambda \le O(\Lf / \reps)$, with probability at least $1-\sigma$, \Cref{alg:innerloop-SGD} outputs a valid $\reps$-\BOO response for $\Fsm$ to query $\bx$ with regularization $\lambda$ and accuracy $\delta$, and has  cost
\begin{equation}\label{eq:sgd-complexity-bound}
		O\left(\Tf N+ (\Tg + \Tf) \frac{\Lf^2}{\lambda^2\delta^2}\log\left(\frac{\log(\Lf/\lambda\delta)}{\sigma}\right)\right).
\end{equation}
\end{restatable}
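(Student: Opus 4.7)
My plan is to run projected restarted SGD on the exponentiated softmax $\beF$ over $\ball_{\reps}(\bar x)$, transfer the resulting accuracy to $\Fsm^\lambda$ via \Cref{lem:bef-approx}, and charge the runtime to a one-time preprocessing of $p(\bar x)$ plus the per-iteration stochastic cost.

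First, I would apply \Cref{lem:bef-approx} with some constant $c = O(1)$ absorbing both hypotheses $\reps \le c\epsilon'/\Lf$ (which holds with $c = 1$ since $\reps = \epsilon'/\Lf$) and $\lambda \le c\Lf/\reps$. This provides a universal constant $C = O(1)$ together with three facts: (i) $\Fsm^\lambda$ and $\beF$ share a minimizer $x_\star$ on $\ball_{\reps}(\bar x)$, with $\Fsm^\lambda(x) - \Fsm^\lambda(x_\star) \le C\bigl(\beF(x) - \beF(x_\star)\bigr)$; (ii) every $\gamma_i$ is $C\Lf$-Lipschitz on the ball, so is $\beF = \sum_{i\in[N]} p_i(\bar x)\gamma_i$; and (iii) every $\gamma_i$, and hence $\beF$, is $C^{-1}\lambda$-strongly convex on the ball. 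A single preprocessing pass of cost $O(N\Tf)$ computes $\{f_i(\bar x)\}_{i\in[N]}$ and stores the probability vector $p(\bar x)$; thereafter, sampling $i \sim p(\bar x)$ and returning $\nabla \gamma_i(x)$ constitutes an unbiased stochastic gradient for $\beF$ whose norm is uniformly bounded by $C\Lf$ on the ball, at per-query cost $O(\Tg + \Tf)$.

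I would then invoke the high-probability restarted SGD of \cite{hazan2014beyond} with Euclidean projection onto $\ball_{\reps}(\bar x)$, using Lipschitz bound $G = C\Lf$, strong convexity $\mu = C^{-1}\lambda$, target suboptimality $\varepsilon_0 = \lambda\delta^2/(2C)$ on $\beF$, and failure probability $\sigma$. The method outputs $\tilde x \in \ball_{\reps}(\bar x)$ with $\beF(\tilde x) - \beF(x_\star) \le \varepsilon_0$ with probability at least $1-\sigma$, using
\[ O\!\left(\frac{G^2}{\mu\varepsilon_0}\log\frac{\log(1/(\mu\varepsilon_0))}{\sigma}\right) = O\!\left(\frac{\Lf^2}{\lambda^2\delta^2}\log\frac{\log(\Lf/(\lambda\delta))}{\sigma}\right) \]
stochastic gradient queries. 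Fact (i) then upgrades this to $\Fsm^\lambda(\tilde x) - \Fsm^\lambda(x_\star) \le \lambda\delta^2/2$, precisely the $\reps$-\BOO requirement of \Cref{def:broo}. Summing the preprocessing and iteration costs yields \eqref{eq:sgd-complexity-bound}.

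The one delicate step is producing the exact doubly-logarithmic confidence factor; this is standard for restarted SGD, obtained by boosting each geometric epoch via median-of-means at confidence $\sigma/\log(1/\varepsilon_0)$ and union-bounding over the $O(\log(1/\varepsilon_0))$ epochs. Since the constraint set is a Euclidean ball (admitting closed-form projection) and all hypotheses of \cite{hazan2014beyond} reduce to facts (ii) and (iii) above, I do not anticipate substantive obstacles beyond bookkeeping the constants inside the log.
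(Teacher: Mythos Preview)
Your proposal is correct and follows essentially the same approach as the paper: apply \Cref{lem:bef-approx} to obtain the Lipschitz and strong convexity constants of $\beF$, run Epoch-SGD-Proj from \cite{hazan2014beyond} with those parameters on the ball $\ball_{\reps}(\bar x)$, transfer the resulting suboptimality back to $\Fsm^\lambda$ via part~\ref{item:bef-error-bound} of \Cref{lem:bef-approx}, and tally the preprocessing plus per-iteration costs. The only cosmetic difference is that the paper cites the Hazan--Kale guarantee as a black box (their Theorem~11) rather than unpacking the doubly-logarithmic factor, so your aside on median-of-means is unnecessary here.
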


\subsection{The (slightly) smooth case: accelerated variance reduction implementation}\label{ssec:asvrg}

If we further assume smoothness of $f_1, \ldots, f_N$, we can use stochastic variance reduction to obtain an improved runtime. With these methods, we estimate the gradient of $\beF$ as $\grad \beF(x') + \grad \gamma_i(x) - \grad \gamma_i(x')$, where $i\sim p(\bx)$ and $x'$ is a reference point which we recompute $\Otil{1}$ times. Here, the $O(N\Tf)$ cost of computing $p(\bx)$ is essentially free compared to the cost $\Otil{N\Tg}$ of computing the exact gradients of $\beF$ at the reference point. We again take advantage of the regularization-induced $\lambda$-strong-convexity a variant of the Katyusha method of \citet{allen2016katyusha}. This results in the following complexity guarantee; see~\Cref{ssec:asvrg-app} for a proof.

\begin{restatable}{corollary}{corasvrg}\label{cor:asvrg}
Let $f_1$, $\cdots$, $f_N$ be $\Lf$-Lipschitz and $\Lg$-smooth,  let $\sigma\in(0,1)$, $\epsilon, \delta > 0$, $\epsilon' =  \epsilon / (2\log N)$ and $\reps =\epsilon' / \Lf$. For any $\bx\in\R^d$ and $\lambda \le O(\Lf / \reps)$, with probability at least $1-\sigma$, Katyusha1 \cite{allen2016katyusha} outputs a valid $\reps$-\BOO response to query $\bx$ with regularization $\lambda$ and accuracy $\delta$, and has computational cost
\begin{equation}\label{eq:asvrg-complexity-bound}
O\left(\left(\Tf+\Tg\right)\left(N+{\frac{\sqrt{N}\left(\Lf + \sqrt{\epsilon' \Lg}\right)}{\sqrt{\lambda \epsilon'}}}\right)\log\left(\frac{\Lf\reps}{ \lambda\delta^2\sigma}\right)\right).
\end{equation}
\end{restatable}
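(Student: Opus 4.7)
The plan is to apply Katyusha1 to minimize the exponentiated softmax $\beF$ over $\ball_{\reps}(\bx)$ and then translate the resulting suboptimality back to the \BOO objective $\Fsm^\lambda$ via \Cref{lem:bef-approx}. First I would run a single preprocessing pass computing $\{f_i(\bx)\}_{i\in[N]}$ and the sampling distribution $p(\bx)$, at cost $O(N\Tf)$; this is absorbed by the linear $N$ term in~\eqref{eq:asvrg-complexity-bound}.

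Next, invoking \Cref{lem:bef-approx} with a sufficiently small absolute constant $c$, the finite-sum representation $\beF = \sum_i p_i(\bx)\gamma_i$ has components $\gamma_i$ that are each $L'$-smooth and $O(\Lf)$-Lipschitz on $\ball_\reps(\bx)$, with $L' = O(\Lg + \lambda + \Lf^2/\epsilon')$, while $\beF$ itself is $\mu$-strongly convex with $\mu = \Omega(\lambda)$. Running Katyusha1, adapted to sample $i \sim p(\bx)$ in its variance-reduction step and to project onto $\ball_\reps(\bx)$, then produces in expectation an $\eta$-suboptimal point of $\beF$ in
\[
O\prn*{\prn*{N + \sqrt{N L'/\mu}} \log(\Delta_0/\eta)}
\]
iterations, each of cost $O(\Tf + \Tg)$. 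Initializing at $\bx$ gives $\beF(\bx) = \epsilon'$, and the $O(\Lf)$-Lipschitz bound yields initial gap $\Delta_0 \le O(\Lf \reps) = O(\epsilon')$. Choosing $\eta = \Omega(\lambda\delta^2)$ and applying \Cref{lem:bef-approx} then ensures $\Fsm^\lambda(\tilde x) - \min_{\ball_\reps(\bx)} \Fsm^\lambda \le \frac{\lambda\delta^2}{2}$, satisfying the \BOO condition in \Cref{def:broo}.

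Substituting $L'/\mu = O(\Lg/\lambda + \Lf^2/(\lambda\epsilon'))$—where the ``$+1$'' contribution from $\lambda/\mu$ is absorbed because the hypothesis $\lambda \le O(\Lf/\reps) = O(\Lf^2/\epsilon')$ forces $\Lf^2/(\lambda\epsilon') = \Omega(1)$—simplifies $\sqrt{L'/\mu}$ to $O((\Lf + \sqrt{\epsilon'\Lg})/\sqrt{\lambda\epsilon'})$, matching the $\sqrt{N}$ factor in~\eqref{eq:asvrg-complexity-bound} with log term $\log(\Lf\reps/(\lambda\delta^2))$ (using $\Lf\reps = \epsilon'$). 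For the high-probability guarantee I would use standard amplification: either run Katyusha1 to expected accuracy $\sigma \cdot \Omega(\lambda\delta^2)$ and invoke Markov's inequality, or repeat $O(\log(1/\sigma))$ times and select the candidate with smallest $\beF$ value—evaluating $\beF$ costs an additional $O(N\Tf)$ per candidate, which is dominated by the leading term. Either route inflates the log factor to $\log(\Lf\reps/(\lambda\delta^2\sigma))$ as stated. The main obstacle I anticipate is carefully justifying Katyusha1's black-box rate in our weighted, ball-constrained setting: non-uniform sampling from $p(\bx)$ and Euclidean projection onto $\ball_\reps(\bx)$ are both standard modifications, but they merit explicit justification since the corollary cites the original Katyusha1 result of~\cite{allen2016katyusha} directly.
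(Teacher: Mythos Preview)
Your proposal is correct and follows essentially the same route as the paper: invoke \Cref{lem:bef-approx} to obtain strong convexity $\mu=\Omega(\lambda)$ and smoothness $L'=O(\Lg+\Lf^2/\epsilon')$ for the components $\gamma_i$, bound the initial gap by $O(\Lf\reps)$, apply Katyusha1's guarantee (the paper packages this as \Cref{lem:ASVRG} and its Markov-inequality corollary), and simplify using $\lambda\le O(\Lf^2/\epsilon')$. The only cosmetic differences are that the paper bounds the initial gap via $\langle\nabla\beF(\xopt),\bx-\xopt\rangle\le\|\nabla\beF(\xopt)\|\cdot\reps$ rather than Lipschitz continuity of $\beF$, and it commits to the Markov route for high probability rather than offering repeat-and-select; your flagged concern about the weighted, ball-constrained adaptation of Katyusha1 is handled in the paper simply by stating \Cref{lem:ASVRG} for weighted sums and implicitly relying on the fact that Katyusha accommodates a proximal/projection step.
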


\subsection{Main result}\label{ssec:ub-statement}
With our oracle implementations in hand, we are ready to state our main result.
\begin{restatable}{theorem}{thmub}\label{thm:ub}
	Let $f_1,f_2,\ldots,f_N$ be $L_f$-Lipschitz, let $\xopt$ be a minimizer of $\F(x) = \max_{i\in[N]} f_i(x)$ and assume $\norm{x_0-\xopt}\le R$ for a given initial point $x_0$ and some $R>0$. For any $\epsilon>0$, \Cref{alg:ms-bacon-redux} with the \BOO implementation for $\Fsm$ in Algorithm~\ref{alg:innerloop-SGD} solves the problem~\eqref{eq:problem} with probability at least $\frac{99}{100}$ and has computational cost
	\begin{equation}
		\label{eq:final-rate-non-smooth}
		O\left(\left(\frac{\lip R\log N}{\eps}\right)^{2/3}\left( \Tf N+\left(\frac{\Lf R}{\eps}\right)^2\cdot(\Tf+\Tg)\log K\right)\log^2K\right),
	\end{equation}
	where $K \defeq \Lf R\epsilon^{-1}\log N$.
	If moreover $f_1,f_2,\ldots,f_N$ are each $\Lg$-smooth, then \Cref{alg:ms-bacon-redux} with a \BOO implementation for $\Fsm$ using
	 Kayusha1 solves~\eqref{eq:problem} with probability $\ge \frac{99}{100}$ and has  cost
	\begin{equation*}
	\label{eq:final-rate-smooth}
	O\left((\Tf+\Tg) \left(
	\left(\frac{\lip R\log N}{\eps}\right)^{2/3}N
	+\left(  \frac{\Lf R\sqrt{\log N}}{\eps}
	+\sqrt{\frac{\Lg R^2}{\eps}}\right)\sqrt{N}\right)\log^3 K\right).
	\end{equation*}
\end{restatable}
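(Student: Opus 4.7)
The plan is to run \Cref{alg:ms-bacon-redux} on the softmax $\Fsm$ with ball radius $r = \reps = \epsilon'/\Lf$ (where $\epsilon' = \epsilon/(2\log N)$) and target accuracy $\epsilon/2$, instantiating the \BOO via \Cref{cor:sgd} in the non-smooth case and \Cref{cor:asvrg} in the smooth case. Since $\Fsm$ is convex and $\Lf$-Lipschitz with $\abs{\Fsm(x)-\F(x)} \le \epsilon/2$ pointwise, an $\epsilon/2$-optimal minimizer of $\Fsm$ over $\ball_R(x_0)$ is $\epsilon$-optimal for $\F$, so it suffices to apply \Cref{thm:ms-bacon-redux} to $\Fsm$.

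First I count outer iterations. Substituting $r = \Theta(\epsilon/(\Lf \log N))$ into \Cref{thm:ms-bacon-redux} gives
\[
T = \tilde{O}\!\left( (\Lf R \log N / \epsilon)^{2/3} \right),
\]
and item~\ref{item:lambda-bounds} supplies the invariants $\lambda_t \delta_t = \Omega(\epsilon/R)$ and $\lambda_t \in [\Omega(\epsilon/(rR)),\, O(\Lf/r)]$, while item~\ref{item:lambda-sum-bound} yields the crucial sum bound $\sum_{t=1}^T \lambda_t^{-1/2} \le \tilde{O}(R/\sqrt{\epsilon})$. I set each oracle call's failure probability to $\sigma = 1/(200T)$ so a union bound gives total success probability $\ge 99/100$, at the cost of only a $\log T$ factor inside the oracles' logarithmic terms, which is absorbed into $\log K$.

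For the non-smooth case, substituting $\lambda_t \delta_t = \Omega(\epsilon/R)$ into \Cref{cor:sgd} converts the per-call cost into $O(\Tf N + (\Tf+\Tg)\Lf^2 R^2 \epsilon^{-2} \log K)$; multiplying by $T$ and folding all residual logs into polylog of $K = \Lf R \log N / \epsilon$ reproduces \eqref{eq:final-rate-non-smooth}. For the smooth case, \Cref{cor:asvrg} gives per-call cost $O((\Tf + \Tg) (N + \sqrt{N}(\Lf + \sqrt{\epsilon' \Lg})/\sqrt{\lambda_t \epsilon'}) \log K)$. Summing over iterations, the $N$ term contributes $T \cdot N (\Tf+\Tg) = \tilde{O}((\Lf R \log N /\epsilon)^{2/3} N (\Tf+\Tg))$. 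For the $\sqrt{N}$ term I pull out the iteration-independent factor $(\Lf + \sqrt{\epsilon' \Lg})/\sqrt{\epsilon'} = \Lf/\sqrt{\epsilon'} + \sqrt{\Lg}$ and apply item~\ref{item:lambda-sum-bound} to $\sum_t \lambda_t^{-1/2}$, producing $\tilde{O}((\Tf+\Tg)\sqrt{N}(\Lf R\sqrt{\log N}/\epsilon + \sqrt{\Lg R^2/\epsilon}))$; combining the two contributions matches the claimed smooth bound.

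The principal obstacle is bookkeeping: I must verify that the accuracy target $\delta_t = \epsilon/(12\lambda_t R)$ dictated by \Cref{alg:ms-bacon-redux} falls inside the regime covered by Corollaries~\ref{cor:sgd}/\ref{cor:asvrg} so that the per-call complexity substitutions are valid, check that the hypothesis $\lambda_t \le O(\Lf/r)$ required by the corollaries is exactly the $\lambda_{\max}$ enforced by \linesearch, and carefully collapse the nested logarithms from the acceleration guarantee, the oracle guarantees, and the union-bound amplification into the stated $\log^2 K$ and $\log^3 K$ factors. The substantive step is deploying the $\sum_t \lambda_t^{-1/2}$ bound from \Cref{thm:ms-bacon-redux}, which is precisely what converts Katyusha's per-call $1/\sqrt{\lambda_t \epsilon'}$ dependence into the final $\sqrt{N}/\sqrt{\epsilon}$ scaling despite wide variation in the per-iteration regularization.
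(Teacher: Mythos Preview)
Your proposal is correct and follows essentially the same approach as the paper's proof: apply \Cref{thm:ms-bacon-redux} to $\Fsm$ with $r=\reps$, use the softmax approximation to transfer $\epsilon/2$-optimality on $\Fsm$ to $\epsilon$-optimality on $\F$, set $\sigma=\Theta(1/T)$ for a union bound, substitute $\delta_t=\Omega(\epsilon/(\lambda_t R))$ into \Cref{cor:sgd} for the non-smooth bound, and for the smooth bound sum the Katyusha per-call costs and invoke the $\sum_t \lambda_t^{-1/2}$ estimate from item~\ref{item:lambda-sum-bound}. The paper's write-up differs only in cosmetic details (e.g., it uses $\sigma=1/(100T)$ and explicitly invokes the one-sided bound $0\le \Fsm-\F\le \epsilon/2$ rather than $|\Fsm-\F|\le\epsilon/2$, which you may want to tighten to avoid a spurious constant).
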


\noindent
The proof of \Cref{thm:ub}, which we provide in \Cref{ssec:ub-proof}, follows straightforwardly from \Cref{thm:ms-bacon-redux} and \Cref{cor:sgd,cor:asvrg}. %
 When applying \Cref{cor:sgd} with $\delta = \Omega(\frac{\epsilon}{\lambda R})$ the dependence of the complexity on $\lambda$ cancels, and we get that each oracle call costs $\Otil{N\Tf + {\Lf^2 R^2}\epsilon^{-2}(\Tf+\Tg)}$. The complexity bound then follows from multiplying the per-call cost with the bound $\Otil{(R/\reps)^{-2/3}}$ that \Cref{thm:ms-bacon-redux} provides on the total number of oracle calls. When applying \Cref{cor:asvrg} we obtain an oracle implementation cost of 
$\Otil{N(\Tf +\Tg) + \lambda^{-1/2}\sqrt{N}\sqrt{\Lf^2 \epsilon^{-1}+ \Lg}(\Tf+\Tg)}$.
The complexity bound again follows by multiplying the per-call cost again with the total number of calls, except that to bound the contribution of $\sqrt{N}$ term we invoke the the guarantee $\sum_{i} \lambda\fancyind{i}^{-1/2} \le \Otil{R\epsilon^{-1/2}}$ in \Cref{thm:ms-bacon-redux} to a tighter bound.
\section{Lower bounds}\label{sec:lb}

\newcommand{\prog}[1][\alpha]{\mathrm{prog}_{#1}}
\newcommand{\tf}{\tilde{f}}
\newcommand{\fhard}{\hat{f}^{\{T,N,\nsm\}}}
\newcommand{\Fhard}{\hat{F}_{\max}^{\{T,N,\nsm\}}}
\newcommand{\nsm}{\ell}
\newcommand{\linkfun}[1][\alpha_T,\nsm]{\psi_{#1}}
\newcommand{\fnull}{f_{\emptyset}}
\newcommand{\problemInstance}{(f_i)_{i\in [N]}}
\newcommand{\orcLocal}{\mathcal{O}^{\mathrm{loc}}}

\newcommand{\B}[1][t]{B_{#1}}
\newcommand{\C}[1][t]{C_{#1}}
\newcommand{\breachEv}[1][t]{\mathfrak{E}_{#1}}
\NewDocumentCommand{\orcInfo}{O{\pi} O{U}}{\mathcal{I}_{#1,#2}}
\newcommand{\bp}[1][t]{\bar{p}_{#1}}
\newcommand{\zrEv}[1][t]{\mathfrak{ZR}_{#1}}
\newcommand{\lbt}{\tau}
\newcommand{\perm}{\Pi}
\newcommand{\iperm}{\Pi^{-1}}
\newcommand{\algseed}{\zeta}
\newcommand{\failedInds}{I^{-}}
\newcommand{\restrictedTo}[1]{\vert_{#1}}

In this section, we prove oracle complexity lower bounds showing that the results of the previous section are order optimal for sufficiently large $N$ and  $\Lg$.
While our algorithms are first-order methods, our lower bounds remain valid even for other algorithms that use high order derivatives, as is typical for our proof technique.

We begin by providing a formal definition of the oracle-based optimization model we consider (\Cref{sec:lb-protocol}). In \Cref{sec:lb-progress-control}, we define an $N$-element variant for the zero-chain concept, and prove that it allows us to control the progress of any (possibly randomized) algorithm. Then, in \Cref{sec:lb-hard-instace} we construct a particular $N$-element zero-chain for which slow progress implies a large optimality gap. Finally, \Cref{sec:lb-statement} ties these results together, giving our lower bound and providing some discussion.

\subsection{Optimization protocol}\label{sec:lb-protocol}
Consider problem instances of the form $\problemInstance$, where $f_i: D\to \R$ for some common domain $D$ and all $i\in[N]$. We say that an algorithm operating on $\problemInstance$  is an  \emph{$N$-element algorithm}
if it uses the following iterative protocol. At iteration $t$, the algorithm produces a query $i_t, x_t$, with $i_t\in [N]$ and $x_t \in D$. It then observes the output of a  \emph{local oracle} for $f_{i_t}$ at the point $x_t$, which we denote by $\orcLocal_{f_{i_t}}(x_t)$. 

Formally, $\orcLocal$ can be any mapping that satisfies $\orcLocal_{f}(x) = \orcLocal_{\tilde{f}}(x)$ whenever $f(y)=\tilde{f} (y)$ for all $y$ in some open set containing $x$ (subsequently referred to as a ``neighborhood'' of $x$). In particular, the first-order oracle used for our upper bounds corresponds to  $\orcLocal_{f}(x) = (f(x), \grad f(x))$ and is valid local oracle. The $p$th order derivative oracle $\orcLocal_{f}(x) = (f(x), \grad f(x), \ldots, \grad^{p} f)$ is also a valid local oracle.
The notion of local oracles is classical in the literature on information-based complexity~\cite{nemirovski1983problem,guzman2015lower}. 

The algorithms we consider may be randomized, and we use $\algseed$ to denote the algorithm's randomness. Beyond $\algseed$, the query of the algorithm at iteration $t$ may only depend on the information it observes from the oracle. That is, for any $t\ge 1$, we have
\begin{equation}\label{eq:general-iter}
	i_t, x_t = Q_t \prn*{ \algseed, \orcLocal_{f_{i_1}}(x_1), \ldots, \orcLocal_{f_{i_{t-1}}}(x_{t-1}) }  
\end{equation}
for some measurable function $Q_t$.

\subsection{Progress control argument}\label{sec:lb-progress-control}

Following well-established methodology~\cite{nesterov2018lectures,guzman2015lower,carmon2020lower}, instead of directly bounding the sub-optimality of the queries $x_1, \ldots, x_t$ we first bound a surrogate quantity we call \emph{progress}. Informally, the progress is the highest coordinate index that the algorithm managed to ``discover'' using the oracle responses. Formally, we define the progress of a point $x$ as
\begin{equation}
	\label{eq:progress}
	\prog[\alpha](x) \defeq \max\crl*{i\ge 1 \;\big|\; \abs{x\coind{i}} > 
		\alpha}~~\mbox{(where $\max \emptyset \defeq 0$)}.
\end{equation}
The parameter $\alpha$ is a significance threshold for declaring a coordinate ``discovered;'' it allows us to prevent algorithms from trivially discovering coordinates by querying directions at random.

We next define a structural property that facilitates controlling the rate with which $\prog(x_t)$ increases. For this definition, we recall that $v\coind{\le l}$ denotes the vector whose first $l$ coordinates are identical to those of $v$ and the remainder are zero. Recall also that $\ball_1^T(0)$ is the unit ball in $\R^T$.

\begin{definition}\label{def:zc}
	A sequence $f_1,\ldots,f_N$ of functions $f_i:\ball_1^T(0)\to \R$ is called an \emph{$\alpha$-robust $N$-element zero-chain} if for all $x \in \ball_1^T(0)$, all $y$ in a neighborhood of $x$, and all $i \in [N]$, we have
	\begin{equation}\label{eq:zc-def}
			\prog(x) \le p \implies f_i(y) = 
			\begin{cases}
							f_i(y\coind{\le p}) &     i < p + 1  \\ 
							f_i(y\coind{\le p+1}) & i = p + 1 \\ 
							f_{N}(y\coind{\le p}) & i > p+1. \\ 
			\end{cases}
	\end{equation}
\end{definition}

To unpack this definition, consider any first-order algorithm with the following two simplifying properties: (1) the queries $i_1, i_2, \ldots$ are drawn i.i.d.\  from 
 $\mathrm{Uniform}([N])$ and (2) every query $x_t$ lies in the span of previously observed gradients  $\grad f_{i_1}(x_1), \ldots, \grad f_{i_{t-1}}(x_{t-1})$ \cite[cf.][]{nesterov2018lectures}. The first query of the algorithm must be $x_1=0$, and consequently $\prog(x_1)=0$. \Cref{def:zc} then implies that $f_2,\ldots,f_N$ are all constant in a neighborhood of $x_1$, while $f_1$ depends only on the first coordinate. Therefore, the span of the gradients (and the next query's progress) can only increase to $1$ after the algorithm queries $i=1$ for the first time. With uniformly random index queries, that takes $\Omega(N)$ queries with constant probability. Repeating this argument, we see that every increase of the gradient span (and hence query progress) takes $\Omega(N)$ queries with constant probability, and therefore reaching progress $T$ takes $\widetilde{\Omega}(NT)$ queries with high probability. 

To extend this conclusion to general algorithms of the form~\eqref{eq:general-iter}, we perform two types of randomization. First, to handle arbitrary strategies for choosing  $i_t$ (as opposed to uniform sampling), we apply a random permutation to $f_1, \ldots, f_N$. Second, to handle arbitrary queries $x_t$ (as opposed to queries in the span of observed gradients), we randomly rotate the coordinate system. This randomization scheme guarantees that no algorithm can materially improve on uniform sampling and span-preserving, as we formally state in the following.

\begin{restatable}{prop}{propProgControl}\label{prop:prog-control}
	Let $\delta, \alpha \in (0,1)$  and let $N,T\in \N$ with $T\le N/2$. Let $\problemInstance$ be an $\alpha$-robust $N$-element zero-chain with domain $\ball_1^T(0)$. For $d \ge T + \frac{2}{\alpha^2}\log \frac{4NT^2}{\delta}$, draw $U$ uniformly from the set of $d\times T$ orthogonal matrices, and draw $\perm$ uniformly from the set of permutations of $[N]$. Let $\tf_i(x) \defeq f_{\iperm(i)} (U^\top x)$. Let $\{(i_t, x_t)\}_{t\ge 1}$ be the queries of any $N$-element algorithm operating on $\tf_1, \ldots, \tf_N$. Then with probability at least $1-\delta$ we have
	\begin{equation*}
		\prog(U^\top x_t) < T~~\mbox{for all}~~t\le \tfrac{1}{16}N\prn*{T-\log\tfrac{2}{\delta}}.
	\end{equation*}
\end{restatable}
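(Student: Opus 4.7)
The plan is a progress control argument that bounds the growth of $p_t \defeq \max_{s\le t}\prog_\alpha(U^\top x_s)$. The core intuition is that the $\alpha$-robust zero-chain property forces any algorithm to serially ``discover'' the directions $Ue_1,\ldots,Ue_T$: each discovery requires either (i) querying the single correct index $\iperm(p+1)$ when $p_{t-1} = p$, which is uniformly random over a set of $\ge N/2$ ``unclassified'' indices, or (ii) randomly aligning a query with a unit vector in an essentially $(d-T)$-dimensional random subspace.

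Let $\mathcal{F}_{t-1}$ denote the $\sigma$-algebra generated by $\algseed$ together with all queries and responses through step $t-1$. The central technical lemma is a conditional-distribution claim: on $\{p_{t-1} = p\}$ and conditional on $\mathcal{F}_{t-1}$, the index $\iperm(p+1)$ is uniform on a set of unclassified indices of cardinality $\ge N-(t-1)\ge N/2$, and the column $Ue_{p+1}$ is uniform on the unit sphere of $(\mathrm{span}\{Ue_1,\ldots,Ue_p\})^\perp$, a subspace of dimension $\ge d-T$. I would prove this by invariance: the stabilizer (inside the symmetry group of $(\Pi,U)$) of all observations up to time $t-1$ contains every transposition of unclassified indices (since the zero-chain definition~\eqref{eq:zc-def} makes all ``high'' indices $i > p+1$ respond identically as $f_N\restrictedTo{y\coind{\le p}}$, so unclassified queried indices are interchangeable with unqueried ones) together with every rotation that fixes $\mathrm{span}\{Ue_1,\ldots,Ue_p\}$ pointwise. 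Because the prior on $(\Pi,U)$ is invariant under this subgroup, so is the posterior.

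Equipped with the lemma, the one-step bound follows by case analysis. Advancing progress at step $t$ from level $p$ to $p+1$ requires $|\langle Ue_{p+1}, x_t\rangle| > \alpha$; since $x_t$ is $\mathcal{F}_{t-1}$-measurable, this either (i) occurs as a ``lucky rotation'' when no previous query revealed $Ue_{p+1}$ (bounded by standard spherical concentration as $\le \tfrac{\delta}{2NT^2}$, using the dimension hypothesis $d-T\ge \tfrac{2}{\alpha^2}\log\tfrac{4NT^2}{\delta}$), or (ii) is a consequence of some earlier ``correct'' query $i_{t'} = \iperm(p_{t'-1}+1)$ at a step $t' < t$. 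Define $X_t \defeq \indic{i_t = \iperm(p_{t-1}+1),~ p_{t-1}<T}$ and let $Y_t$ be the lucky-rotation indicator; by the lemma, $\Pr(X_t=1\mid\mathcal{F}_{t-1}) \le 1/(N-t+1) \le 2/N$ for $t \le N/2$, so $\sum_{s\le t}X_s$ is stochastically dominated by $\mathrm{Binomial}(t, 2/N)$.

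For aggregation, reaching $p_s = T$ requires $\sum_{t\le s}(X_t + Y_t) \ge T$, since each correct query unlocks at most one new level. A standard Chernoff bound gives $\Pr(\mathrm{Binomial}(s,2/N) \ge T)\le \delta/2$ for $s \le \tfrac{N}{16}(T-\log(2/\delta))$, and a union bound bounds the lucky-rotation contribution by $s\cdot \tfrac{\delta}{2NT^2}\le \delta/2$; combining yields the claimed $1-\delta$ probability. The main obstacle I anticipate is the conditional-distribution lemma itself, because observations reveal nontrivial partial information about $\Pi$: whenever a queried index $j$ produces a distinct low-level function $f_i$ with $i \le p$ one learns $\Pi(j) = i$ exactly, whereas ``high'' responses only constrain $\Pi(j) > p+1$ as a set. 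The argument must carefully identify the stabilizer subgroup of all past observations and verify that it acts transitively on the remaining possible values of $\iperm(p+1)$ and rotations of $Ue_{p+1}$. Once this symmetry foundation is set, the remaining steps are routine concentration and union-bound arguments.
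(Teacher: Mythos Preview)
Your high-level structure matches the paper's (invariance of $(\Pi,U)$, a split into ``lucky rotation'' versus ``correct index'' events, spherical concentration plus Chernoff), but the step-indexed accounting has a real gap.

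You establish $\Pr(X_t=1\mid\mathcal{F}_{t-1})\le 1/(N-t+1)\le 2/N$ only for $t\le N/2$, yet you invoke the $\mathrm{Binomial}(s,2/N)$ comparison for $s$ up to $\tfrac{N}{16}(T-\log(2/\delta))$, which exceeds $N/2$ whenever $T>8+\log\tfrac{2}{\delta}$. For large $t$ the candidate set for $\Pi(p+1)$ can genuinely shrink below $N/2$: if the algorithm sits at level $p$ and makes many distinct ``high'' queries there, each one rules out an index, so the per-step bound $2/N$ fails and the Binomial domination breaks. A related issue is that $\bar p_{t-1}$ is not $\mathcal{F}_{t-1}$-measurable (it depends on $U$), which makes your conditional-distribution lemma delicate: conditioning jointly on $\mathcal{F}_{t-1}$ and $\{\bar p_{t-1}=p\}$ can couple $\Pi$ and $U$ in ways your symmetry argument does not address.

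The paper sidesteps both problems by indexing over \emph{levels} rather than steps. It introduces a counter $C_t$ that increments whenever $\iperm(i_t)=C_t+1$; crucially $C_t$ depends only on $\Pi$ and the index queries $i_1,\dots,i_{t-1}$, not on $U$ or $\bar p_{t-1}$. The analysis then bounds the waiting times $\Delta_k=\Theta_k-\Theta_{k-1}$ between increments. Because $k\le T\le N/2$, conditioning on $\Pi\restrictedTo{k-1}$ always leaves $\Pi(k)$ uniform over $N-(k-1)\ge N/2$ indices, yielding $\Pr(\Delta_k\le j\mid\algseed,\Pi\restrictedTo{k-1},U)\le 2j/N$ for \emph{every} $k$; the Chernoff step is then over the $T$ indicators $\indic{\Delta_k\le N/8}$. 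A separate ``zero-respecting'' event $\zrEv[t]=\{\bar p_s\le C_s~\forall s\le t\}$ is what ties $C_t$ back to actual progress, and the rotation-concentration bound is used only to show $\Pr(\zrEv[NT]^c)\le\delta/2$. Decoupling the level counter from $U$ in this way is the idea your sketch is missing; once you reorganize by level, the rest of your plan goes through.
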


See \Cref{sec:prog-control-proof} for a proof. Our definition of $N$-element zero-chains and our proof of their progress control property builds on the notion of (single element) zero-chain functions~\cite{carmon2020lower}. It is also closely related to probability-$p$ zero-chains~\cite{arjevani2019lower}; \Cref{prop:prog-control} essentially shows that $N$-element algorithms interacting with an $N$-element zero-chain make progress about as slowly as stochastic algorithms interacting with with a probability-$N^{-1}$ zero-chain.

\subsection{Hard instance construction}\label{sec:lb-hard-instace}
With the progress-control machinery in hand, we proceed to constructing a specific $N$-element zero-chain that also guarantees a large optimality gap for points with progress smaller than $T$. Toward that end, we first define the ``link function'' $\linkfun[\alpha,\nsm]: \R \to \R_+$ as 
\begin{equation*}
	\linkfun[\alpha, \nsm] (t) \defeq 
	\begin{cases}
		0 &   |t| \le \alpha \\
		\frac{\nsm}{2} (t-\alpha)^2 & \alpha \le |t| \le \nsm^{-1} + \alpha \\
		|t| - \alpha - \frac{1}{2\nsm} & \mbox{otherwise}.
	\end{cases}
\end{equation*}
Clearly, $\linkfun[\alpha, \nsm]$ is 1-Lipschitz, $\nsm$-smooth, and is identically zero for all $|t|\le \alpha$. We note that $\linkfun[\alpha, \nsm]$ is the composition of the Huber function~\cite{huber1992robust} with $\max\{0,|t|-\alpha\}$. 

Chain constructions of the form $\sum_{i \in[N]} \linkfun(x\coind{i}-x\coind{i-1})$ are common in lower bounds for convex optimization \cite[cf.][]{nesterov2018lectures,woodworth2016tight}. For our construction, we instead spread the link components across the different elements. Formally, for $i\in[N]$, we define the $i$th function in the our hard instance as
\begin{equation}\label{eq:hard-instance-def}
	\fhard_i(x) \defeq 
	\begin{cases}
		\linkfun \prn*{ \frac{x\coind{i} - x\coind{i-1}}{{2}} } & i \le T \\
		0 & \mbox{otherwise}\\
	\end{cases}
	~~\mbox{where}~~\alpha_T \defeq \frac{1}{4 T^{3/2}}~\mbox{and}~x\coind{0}\defeq \frac{1}{\sqrt{T}}.
\end{equation}
The following lemma summarizes the properties of our construction. The proof of the lemma is straightforward and we provide it in \Cref{sec:hard-instance-props-proof}

\begin{restatable}{lem}{lemHardInstanceProps}\label{lem:hard-instance-props}
	For every $T,N\in \N$ and $\nsm \ge 0$, such that $T\le N$, we have that  
	\begin{enumerate}
		\item \label{item:hi-zc} The hard instance $(\fhard_i)_{i\in N}$  is an $\alpha_T$-robust $N$-element zero-chain.
		\item \label{item:hi-lip} The function $\fhard_i$ is 1-Lipschitz and $\nsm$-smooth for every $i\in\ [N]$.
		\item \label{item:hi-optgap} For $x\in \domain$ with  $\prog[\alpha_T](x) < T$, the objective $\Fhard(x) = \max_{i\in [N]} \fhard_i(x)$ satisfies
\begin{equation*}
			\Fhard(x)- \min_{x\opt\in\ball_1(0)} \Fhard(x\opt)  \ge \linkfun\prn*{\frac{3}{8T^{3/2}}} \ge
			\min\crl*{ \frac{1}{8T^{3/2}}, \frac{\nsm}{32T^3} }.
\end{equation*}
	\end{enumerate}
\end{restatable}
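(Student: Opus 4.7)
The plan is to establish the three claims via direct verification from the construction~\eqref{eq:hard-instance-def} and elementary properties of the link function $\linkfun$.

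For \Cref{item:hi-zc}, I fix $x\in\ball_1^T(0)$ with $p=\prog[\alpha_T](x)$ and verify each case of \Cref{def:zc}. When $i\le p$ or $i=p+1$, the claim is immediate, as $\fhard_i$ depends only on coordinates $i-1$ and $i$ (with $x\coind{0}:=1/\sqrt{T}$), both lying in $\{0,\ldots,p+1\}$. When $i>p+1$ and $i>T$, both sides vanish. When $p+1<i\le T$, using $i-1,i>p$ together with the progress bound yields $|x\coind{i-1}|,|x\coind{i}|\le\alpha_T$, hence $|(x\coind{i}-x\coind{i-1})/2|\le\alpha_T$; a continuity argument extends this to a neighborhood of $x$ on which the argument stays in the flat zero region of $\linkfun$, giving $\fhard_i(y)=0=\fhard_N(y\coind{\le p})$.

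For \Cref{item:hi-lip}, inspection of the piecewise definition of $\linkfun$ shows that its derivative is continuous, piecewise linear with slopes in $\{0,\pm\nsm\}$, and takes values in $[-1,1]$, so $\linkfun$ is $1$-Lipschitz and $\nsm$-smooth. Writing $\fhard_i(x)=\linkfun(L_i(x))$ for the affine map $L_i(x)=(x\coind{i}-x\coind{i-1})/2$ with $\|\grad L_i\|\le 1/\sqrt{2}$, the chain rule yields Lipschitz constant at most $1/\sqrt{2}\le 1$ and smoothness at most $\nsm/2\le\nsm$.

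For \Cref{item:hi-optgap}, I upper bound $\min\Fhard$ by exhibiting $x\opt\coind{i}=1/\sqrt{T}-i\alpha_T$ for $i\in[T]$: a direct summation gives $\|x\opt\|^2<1$, and every consecutive difference equals $-\alpha_T$, so $|(x\opt\coind{i}-x\opt\coind{i-1})/2|=\alpha_T/2<\alpha_T$ lies in the zero region of $\linkfun$, yielding $\fhard_i(x\opt)=0$ for all $i\in[N]$. Since $\Fhard\ge 0$ identically, $\min_{x\opt\in\ball_1(0)}\Fhard(x\opt)=0$. For the lower bound on $\Fhard(x)$ when $\prog[\alpha_T](x)<T$, we have $|x\coind{T}|\le\alpha_T$, so the triangle inequality gives $\sum_{i=1}^T|x\coind{i}-x\coind{i-1}|\ge|x\coind{0}|-|x\coind{T}|\ge 1/\sqrt{T}-\alpha_T$; pigeonhole then produces an index with $|x\coind{i}-x\coind{i-1}|/2\ge(1/\sqrt{T}-\alpha_T)/(2T)\ge 3/(8T^{3/2})$ (valid for $T\ge 1$), and monotonicity and evenness of $\linkfun$ on $[\alpha_T,\infty)$ imply $\Fhard(x)\ge\linkfun(3/(8T^{3/2}))$. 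The final $\min\{1/(8T^{3/2}),\nsm/(32T^3)\}$ lower bound follows from a two-case analysis on whether $3/(8T^{3/2})$ lies in the quadratic or the linear piece of $\linkfun$.

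The main subtlety I anticipate is the boundary handling in \Cref{item:hi-zc}: at points where $|x\coind{i}|=\alpha_T$ exactly for some $i>p$, the continuity step is delicate since the argument of $\linkfun$ can sit right at the boundary of its flat region, and arbitrarily small perturbations can push it outside. I expect to resolve this by exploiting the averaging factor $1/2$ in $L_i$ (which provides slack unless the two adjacent coordinates have equal magnitudes and opposite signs) together with a careful reading of whether $\prog$ uses strict or non-strict inequality in the threshold.
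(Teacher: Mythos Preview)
Your proposal follows the paper's proof essentially line by line. Two minor points are worth noting.

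For \Cref{item:hi-optgap}, the paper takes the simpler minimizer $x\opt\coind{i}=1/\sqrt{T}$ for all $i\in[T]$ (so $\|x\opt\|=1$ exactly and every consecutive difference is zero), which avoids your norm computation entirely; your choice also works but is unnecessarily complicated.

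For \Cref{item:hi-zc}, your boundary worry is legitimate, but your proposed fix via the factor $1/2$ does \emph{not} cover the extreme case $x\coind{i-1}=-\alpha_T$, $x\coind{i}=\alpha_T$: there the argument of $\linkfun$ is exactly $\alpha_T$, and arbitrarily small perturbations of $y$ escape the flat region. The paper's proof has the identical imprecision (it asserts the strict inequality $|x\coind{i}-x\coind{i-1}|<2\alpha_T$, which is unjustified at that edge point). This measure-zero corner does not affect the downstream lower bound and is easily repaired, e.g., by widening the flat region of $\linkfun$ to $|t|\le 2\alpha_T$ or by using $\ge$ rather than $>$ in the definition of $\prog$; either change only affects constants.
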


\subsection{Lower bound statement}\label{sec:lb-statement}

Finally, we combine the results of the previous sections to state our lower bound. In the statement, we use $a\wedge b \defeq \min\{a,b\}$ to abbreviate binary minimization.

\begin{restatable}{theorem}{thmLB}\label{thm:lb}
	Let $\Lf, \Lg, R > 0$, $\epsilon < \Lf R \wedge \Lg R^2$, $N\in\N$ and $\delta\in (0,1)$. Then, for any (possibly randomized) algorithm there exists an $\Lf$-Lipschitz and $\Lg$-smooth functions $(f_i)_{i\in[n]}$ with domain $\ball_R^{d}(0)$ for $d=O \prn*{ \brk*{
			\prn[\big]{\frac{\Lf R}{\epsilon}}^{2} \wedge \prn[\big]{\frac{\Lg R^2}{\epsilon}}}
			 \log \frac{N(\Lf R \wedge \Lg R^2)}{\epsilon } }$ such that with probability at least $\half$ over the randomness of the algorithm, the first
	\begin{equation}
		\label{eq:final-lower-bound}
		\Omega \prn*{  
			N \brk*{ \prn[\Big]{\frac{\Lf R}{\epsilon}}^{2/3} \wedge \prn[\Big]{\frac{\Lg R^2}{\epsilon}}^{1/3}}
			+
			\brk*{ \prn[\Big]{\frac{\Lf R}{\epsilon}}^2 \wedge \prn[\Big]{\frac{N \Lg R^2}{\epsilon}}^{1/2}}	
		} 
	\end{equation}
	queries of the algorithm are all $\epsilon$-suboptimal for $\F(x) = \max_{i\in [N]} f_i(x)$.
\end{restatable}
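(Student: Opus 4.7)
The plan is to prove the two additive terms in~\eqref{eq:final-lower-bound} separately; since $\Omega(A+B)=\Omega(\max\{A,B\})$, it suffices to establish each as a valid lower bound. The $N$-linear term $N\cdot\min\{(\Lf R/\eps)^{2/3},(\Lg R^2/\eps)^{1/3}\}$ follows by applying the $N$-element zero-chain machinery of \Cref{sec:lb-progress-control,sec:lb-hard-instace} to a rescaled version of the construction in \Cref{lem:hard-instance-props}. The sublinear term $\min\{(\Lf R/\eps)^{2},\sqrt{N\Lg R^2/\eps}\}$ reduces to classical single-function and finite-sum lower bounds that can be embedded into the $N$-function framework by setting all but a few of the $f_i$ to be identically zero.

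For the $N$-linear term I would define $\tilde{f}_i(x) \defeq \Lf R \cdot \fhard_i(x/R)$ with internal smoothness parameter $\nsm \defeq \min\{1,\Lg R/\Lf\}$ in the underlying $\fhard$. A direct calculation confirms that each $\tilde{f}_i$ is $\Lf$-Lipschitz and $\Lg$-smooth on $\ball_R^T(0)\subset\ball_R^d(0)$, and that the $\alpha_T$-robust $N$-element zero-chain property is preserved under this affine rescaling (with $\alpha_T$ rescaled by $R$). The optimality-gap guarantee in item~\ref{item:hi-optgap} of \Cref{lem:hard-instance-props} then becomes $\Lf R\cdot\min\{1/(8T^{3/2}),\nsm/(32T^3)\}$, which exceeds $\eps$ for $T=\Theta(\min\{(\Lf R/\eps)^{2/3},(\Lg R^2/\eps)^{1/3}\})$. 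Invoking \Cref{prop:prog-control} with this $T$, a uniformly random permutation $\perm$, and a uniformly random rotation $U$ (with ambient dimension chosen to satisfy its hypothesis, which is absorbed into the $d$ stated in the theorem) concludes that with probability at least $\half$, none of the first $\Omega(NT)$ iterates achieves progress~$T$, and hence every such iterate is $\eps$-suboptimal.

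For the sublinear term I handle each minimand separately. The non-smooth bound $(\Lf R/\eps)^2$ is obtained by taking $f_1$ to be a classical $\Lf$-Lipschitz non-smooth hard instance on $\ball_R$ (e.g., Nemirovski's worst-case construction) and setting $f_2=\cdots=f_N\equiv 0$; then $\max_i f_i=f_1$ and the standard single-function lower bound transfers verbatim. The smooth finite-sum bound $\sqrt{N\Lg R^2/\eps}$ should follow by distributing the links of an accelerated-gradient-hard Nesterov chain of length $T_0=\Theta(\sqrt{\Lg R^2/\eps})$ across the $N$ component functions so that each link is held by exactly one component and the remaining components vanish in a neighborhood of any point whose progress has not yet reached that link. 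A probability-$N^{-1}$ zero-chain argument in the style of Woodworth--Srebro and Arjevani--Shamir then amplifies the single-function smooth bound by a factor of $\sqrt{N}$.

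The main technical obstacle is the smooth finite-sum bound: the rigid $N$-element zero-chain framework of \Cref{sec:lb-progress-control} yields $\Omega(NT_0)$ rather than the desired $\Omega(\sqrt{NT_0})$, because it enforces the discovery of one link per $\Omega(N)$ queries rather than per $\Omega(\sqrt{N})$. Obtaining the correct $\sqrt{N}$ overhead will require a softer progress-control argument that exploits acceleration, essentially replacing the deterministic zero-chain property with a probability-$c/\sqrt{N}$ variant whose accumulated progress is controlled via a high-probability martingale concentration. The remaining steps are routine bookkeeping: verifying the Lipschitz and smoothness constants after rescaling, checking that the required ambient dimension fits within the $d$ stated in the theorem, and combining the two lower bounds into~\eqref{eq:final-lower-bound}.
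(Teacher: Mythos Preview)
Your treatment of the $N$-linear term is essentially the same as the paper's and is correct: rescale the hard instance by $\Lf R$ and $1/R$, choose $T$ on the order of $\min\{(\Lf R/\epsilon)^{2/3},(\Lg R^2/\epsilon)^{1/3}\}$, and invoke \Cref{prop:prog-control} together with \Cref{lem:hard-instance-props}.

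The genuine gap is in the sublinear term, specifically the smooth contribution $\sqrt{N\Lg R^2/\epsilon}$. Your plan---distribute the links of a single Nesterov chain across the $N$ components and then invent a ``probability-$c/\sqrt{N}$'' zero-chain with martingale concentration---does not work and is not needed. As you yourself observe, the progress-control machinery of \Cref{sec:lb-progress-control} gives an $\Omega(N)$ cost per unit of progress, so applied to a chain of length $T_0=\Theta(\sqrt{\Lg R^2/\epsilon})$ it would yield $\Omega(N T_0)$, not $\Omega(\sqrt{N}\,T_0)$. There is no natural ``probability-$c/\sqrt{N}$'' mechanism here: the $\sqrt{N}$ in the target bound does not come from the discovery cost of a link but from a scaling of the domain radius.

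The paper's argument is a straightforward duplication-plus-pigeonhole trick. Take a classical single-function hard instance $F$ that is $\Lf$-Lipschitz, $\Lg$-smooth, has its minimizer within radius $R'$ of the origin, and requires
\[
T_{R'}=\Omega\!\left(\Big(\tfrac{\Lf R'}{\epsilon}\Big)^{2}\wedge\Big(\tfrac{\Lg R'^{2}}{\epsilon}\Big)^{1/2}\right)
\]
queries with high probability. Now set $R'=R/\sqrt{N}$ and place $N$ \emph{independent} copies of $F$ on disjoint coordinate blocks, i.e., $f_i(x)=F(x\coind{(i-1)d'+1},\ldots,x\coind{id'})$. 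The concatenated minimizer has norm $\sqrt{N}\cdot R'=R$, so the domain bound is respected. Among the first $N\,T_{R/\sqrt{N}}$ queries, pigeonhole guarantees that some component $i$ receives at most $T_{R/\sqrt{N}}$ queries; with high probability that component remains $\epsilon$-suboptimal, and hence so does the maximum. Since
\[
N\,T_{R/\sqrt{N}}
=\Omega\!\left(N\cdot\Big[\tfrac{1}{N}\Big(\tfrac{\Lf R}{\epsilon}\Big)^{2}\wedge\Big(\tfrac{\Lg R^{2}}{N\epsilon}\Big)^{1/2}\Big]\right)
=\Omega\!\left(\Big(\tfrac{\Lf R}{\epsilon}\Big)^{2}\wedge\Big(\tfrac{N\Lg R^{2}}{\epsilon}\Big)^{1/2}\right),
\]
this delivers the entire sublinear term in one stroke---both the nonsmooth $(\Lf R/\epsilon)^2$ and the smooth $\sqrt{N\Lg R^2/\epsilon}$ pieces---without any new progress-control argument. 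Replace your second paragraph's smooth-case plan with this duplication argument and the proof goes through.
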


See \Cref{sec:lb-proof} for a proof of this result. The first  (linear-in-$N$)  term in the lower bound follows from \Cref{prop:prog-control} and \Cref{lem:hard-instance-props} via a re-scaling argument.  The second (sublinear-in-$N$) lower bound term is a direct consequence of existing lower bounds~\cite{diakonikolas2020lower,woodworth2016tight,fang2018near}.

We remark that our lower bound is stated for optimization constrained to a ball of radius $R$, while our upper bounds assume unconstrained optimization given a minimizer of norm at most $R$. These two settings are essentially equivalent; in \Cref{app:lb-unconstrained} we sketch a general technique for transferring lower bounds to the unconstrained setting.

In \Cref{table:summary} we specify our lower bound in the special cases $\Lg=\infty$ and $\Lg=\Theta(\Lf^2 / \epsilon)$, showing that they match our upper bounds (up to polylogarithmic factors) for $N=\Omega( (\Lf R / \epsilon)^2 )$ in the former case and for any $N$ in the latter. More broadly, when  $\Lg=\Theta(\Lf^{2+q} R^{q} / \epsilon^{1+q} )$ our lower and upper bounds match for any $N$ and $q\in[0,2/3]$. For $\Lg = o(\Lf^2 /\epsilon)$ and $\Lg = \omega(\Lf^{8/3}R^{2/3}/\epsilon^{5/3})$, however, there remain  gaps between our upper and lower bounds. We discuss these gaps in the following section. 

\section{Discussion}\label{sec:discussion}
To conclude the paper, we provide some commentary on our results and the possibilities of improving them. For simplicity, in this section we revert to the setting $\Lf=R=1$ used in the introduction. We also use $a \ll b$ as a shorthand for $a = O(b)$, and ignore constant and logarithmic factors throughout.

\subsection{Gaps between the upper and lower bounds}

\paragraph{Regimes where a gap exists.}
Comparing our upper bound in~\Cref{thm:ub} to our lower bound in~\Cref{thm:lb}, we identify two regimes where our upper and lower bounds disagree by more than polylogarithmic factors. The first is the \emph{smooth regime} $\Lg \ll \epsilon^{-1}$, the lower bound is $\Omega(N\Lg^{1/3}\epsilon^{-1/3} + \sqrt{N\Lg\epsilon^{-1}})$ while our upper bound is $\Otil{N\epsilon^{-2/3} + \sqrt{N}\epsilon^{-1}}$, and a different algorithm gives a better oracle complexity $O(N\sqrt{\Lg\epsilon^{-1}})$ (see \Cref{sec:app-discussion-nesterov}) which still falls short of the lower bound. 

The second regime is the \emph{non-smooth} regime $\Lg \gg \epsilon^{-1}$, where both the upper and lower bounds share the term $N\epsilon^{-2/3}$. Comparing the lower bound to the variance reduced upper bound~\eqref{eq:final-rate-smooth}, we see that they disagree if and only if $N\epsilon^{-2/3} + \epsilon^{-2}  \ll \sqrt{N\Lg \epsilon^{-1}}$ which is equivalent to $N \ll \Lg \epsilon^{1/3}$ and $N \gg \epsilon^{-3}/\Lg$. Clearly, this is only possible only when $\Lg \gg \epsilon^{-5/3}$, and so we conclude that the rate~\eqref{eq:final-rate-smooth} is in fact optimal whenever  $\epsilon^{-1} \ll \Lg \ll \epsilon^{-5/3}$. Moreover, the upper bound~\eqref{eq:final-rate-non-smooth} matches the lower bound whenever $N \gg \epsilon^{-2}$ for any $\Lg\gg \epsilon^{-1}$. We conclude that gaps in the non-smooth regime exist only for $\Lg \gg \epsilon^{-5/3}$ and $\epsilon^{-3} / \Lg \ll N \ll \min\{\epsilon^{-2}, \epsilon^{1/3}\Lg\}$. 

\paragraph{Closing the gap in the non-smooth regime.}
Improving the bound~\eqref{eq:final-rate-non-smooth} from $\Otil{N\epsilon^{-2/3} + \epsilon^{-8/3}}$ to $\Otil{N\epsilon^{-2/3} + \epsilon^{-2}}$ would imply that~\eqref{eq:final-lower-bound} gives the optimal rate for any $\Lg \gg \epsilon^{-1}$. The main barrier for obtaining such improvement is our accuracy requirements $\delta_t = O(\epsilon / \lambda_t)$ in \Cref{alg:ms-bacon-redux}. Meeting this requirement with SGD means that each oracle implementation costs $\Otil{N+\epsilon^{-2}}$ function/gradient evaluations, and multiplying this cost by the number of rounds $\Otil{\epsilon^{-2/3}}$ yields the exponent $8/3$. A variant of \Cref{alg:ms-bacon-redux} which can handle less accurate \BOO outputs could close this gap by allowing a more efficient SGD-based implementation.

\paragraph{Closing the gap in the smooth regime.}
The gap between our upper and lower bounds when $\Lg \ll \epsilon^{-1}$ is more fundamental than the one arising for $\Lg \gg \epsilon^{-5/3}$, because it affects the term linear in $N$. The barrier for improving the linear term in our algorithm is the ball radius. Any $\reps$-\BOO implementation with $\Omega(N)$ cost will have overall complexity $\Omega(N\reps^{-2/3})$. The techniques we develop in \Cref{sec:boo-implementation} only allow us to support $\reps=\Otil{\epsilon}$, because this is the largest radius where the exponentiated softmax is stable (see~\Cref{lem:bef-approx}). 

\paragraph{Conjectures and future work.}
We conjecture that our lower bound is in fact optimal in both smoothness regimes. In future work we will attempt to close the remaining complexity gaps described above.

\subsection{Some necessary algorithmic structures}
We now argue that several aspects of our method, namely functions value access, individual function queries and randomization are necessary in any method that achieves (or improves on) our complexity bounds.

\paragraph{Function value access.}
It is possible to minimize a convex function $f$ by iterative (sub)gradient evaluations, without access to the value of $f$ itself. In contrast, all algorithms for minimizing $\F = \max_{i\in[N]} f_i(x)$ \emph{must} query the values of the $f_i$'s in addition to their gradients. To see why this is so, consider the case where $f_i(x) = \Pi(i) - x_i$, where $\Pi$ is a random permutation of $[N]$ and the domain is the unit Euclidean ball. The global minimum of $\max_{i\in [N]}f_i(x)$ is the $\Pi^{-1}(N)$-th standard basis vector. However, gradients provide no information about $\Pi^{-1}(N)$, since $\nabla f_i(x) = -e_i$ for all $x$, independent of $\Pi$.

\paragraph{Individual-function access.}
The algorithms from prior work in \Cref{table:summary} (namely the subgradient method, AGD on softmax and AGD on linearization) are full-batch methods: they proceed by querying all $N$ functions $f_1, \ldots, f_N$  at the same point $x_t$ and using the result to generate the next query point $x_{t+1}$. In contrast, our \BOO implementations proceed by sampling an index $i_t$, computing $\grad f_i$ at $x_{t}$ (and potentially another point), and generating the next query $x_{t+1}$. Full-batch methods are more amenable to parallelization, but for our problem have demonstrably worse oracle complexity. To see this, consider the case where all the $f_i$'s are identical and equal to standard hard instance for convex optimization. For such input, any full-batch methods will have oracle complexity $\Omega(N\min\crl{\epsilon^{-2}, \sqrt{\Lg \epsilon^{-1}}})$~\cite{diakonikolas2020lower}, which is worse than our upper bounds for any $\Lg \gg \epsilon^{-1/3}$ and sufficiently large $N$.

\paragraph{Randomization.}
Another contrast between the prior algorithms in \Cref{table:summary} and our algorithm is that the former are deterministic while ours is randomized. \citet{woodworth2016tight} prove a lower bound of $\Omega(N\min\crl{\epsilon^{-2}, \sqrt{\Lg \epsilon^{-1}}})$ gradient queries for any deterministic method for minimizing the \emph{average} of $N$ functions. Observing that the maximum of the $N$ functions in their construction has the same minimum value as their average (and that the maximum upper bounds the average in any other points), we conclude that this lower bound is also valid for any deterministic method for solving the problem~\eqref{eq:problem}. Therefore, randomization is necessary for obtaining our improved rates of convergence.

\subsection{Practical considerations}
The main purpose of the algorithms we develop in this paper is to clarify the complexity of the fundamental optimization~\eqref{eq:problem}. Nevertheless, since this problem formulation is relevant for a number of machine learning tasks~\cite{clarkson2012sublinear,hazan2011beating,shalev2016minimizing}, it is interesting to try and develop a more practical variant of algorithms. Two aspects of our method which we believe will be particularly useful in practice are the gradient estimation scheme we use in \Cref{alg:innerloop-SGD} and the momentum scheme in \Cref{alg:ms-bacon-redux}. 

However, a number of aspects of our method seem rather impractical. First, the theory instructs us to constrain subproblem solutions to a very small ball of radius $\reps$ of roughly $\epsilon / \Lf$. Since usually neither $\epsilon$ or $\Lf$ are known in advance, the parameter $\reps$ must be tuned. Moreover, choosing $\reps$ to be small in keeping with the theory would likely mean very slow progress in the early stages of the algorithm. A second impractical aspect  is the bisection stage in \Cref{alg:ms-bacon-redux}: while in theory the bisection only increases complexity by a logarithmic factor, in practice it entails solving a considerable number of sub-problems without making progress. This bisection overhead is an issue with Monteiro-Svaiter acceleration more broadly and a topic of active research~\cite{song2019unified,nesterov2019implementable}.
	 
\newpage

 \arxiv{\section*{Acknowledgment}}
 YC was supported in part by Len Blavatnik and the Blavatnik Family foundation, and the Yandex Machine Learning Initiative for Machine Learning. YJ was supported by Stanford Graduate Fellowship. AS was supported in part by a Microsoft Research Faculty Fellowship, NSF CAREER Award
 CCF-1844855, NSF Grant CCF-1955039, a PayPal research award, and a Sloan Research Fellowship.

\arxiv{\bibliographystyle{abbrvnat}}

\newpage

\appendix
\part*{Appendix}

\section{Additional discussion}\label{app:discuss}

Here we discuss three important points that exceeded the scope of our introduction. First, \Cref{sec:app-discussion-smoothness} explains what makes $\Lf^2/\epsilon$ a natural smoothness scale and when we can expect the $f_i$ to be at least that smooth. Then, \Cref{sec:app-discussion-linear} discusses the maximally smooth case of linear loss functions, and compare our results to guarantees of methods specialized to this setting. Finally, \Cref{sec:app-discussion-nesterov} considers the computational complexity of implementing the steps in the accelerated iterative linearization scheme of~\cite[Section 2.3.1]{nesterov2018lectures}.

\subsection{The generality of the smoothness assumption $\Lg = O(\Lf^2 / \epsilon)$}\label{sec:app-discussion-smoothness}

Let $f$ be a convex, $\Lf$ Lipschitz function. When $f$ is not continuously differentiable, it is still possible to uniformly approximate it with a continuously differentiable function, that moreover has a Lipschitz gradient. More concretely, for every $\varepsilon > 0$, we may consider the infimal convolution of $f$ with a quadratic regularizer (also known as its Moreau envelope):
\begin{equation}\label{eq:moreau-smoothing}
	\tilde{f}(x) = \min_{y \in \R^d} \crl*{ f(y) + \frac{\Lf^2}{2\varepsilon} \norm{x-y}^2 }.
\end{equation}
It holds that $0 \le f(x) - \tilde{f}(x) \le \varepsilon/2$ for all $x\in\R^d$ and moreover that $\tilde{f}$ is $\Lf^2/ \varepsilon$ smooth (i.e., with $\Lf^2/\varepsilon$ Lipschitz gradient)~\cite[see, e.g.,][]{guzman2015lower}. Therefore, if we wish to minimize $f$ to accuracy $\epsilon$, we may choose $\varepsilon = \epsilon$ and replace $f$ with the $O(\Lf^2/\epsilon)$-smooth function $\tilde{f}$.

The computational cost of such replacement depends on the application. In certain cases, smoothed versions of the $f_i$'s have closed-form expression, and we may simply define the problem with them instead. Moreover, in several machine learning applications
 each function $f_i$ is ``simple,'' and querying index $i$ really corresponds to obtaining full access to this function, in which case directly computing~\eqref{eq:moreau-smoothing} might be feasible. 

However, when we are truly restricted to accessing $f_i$ through a gradient and value black box, there will be some instances where computing~\eqref{eq:moreau-smoothing} (and indeed any other smoothing) is substantially more expensive than a single oracle query. To see this, note that the worst-case oracle complexity of minimizing a single non-smooth convex function scales as $\Omega(\Lf^2 R^2 \epsilon^{-2})$, while the complexity of minimizing an $\Lg$-smooth function scales as $O(\sqrt{\Lg R^2 \epsilon^{-1}})$. If $T_\epsilon$ is the worst number of oracle calls required to compute an $O(\epsilon)$-accurate $O(\Lf^2 / \epsilon)$-smooth approximation for any $\Lf$-Lipschitz, we immediately have a general complexity upper bound of $O(T_\epsilon \Lf R \epsilon^{-1})$. Comparing it to the lower bound $\Omega(\Lf^2 R^2 \epsilon^{-2})$ immediately yields that $T_\epsilon = \Omega( \Lf R \epsilon^{-1})$ in the worst case. Therefore, the fact that our algorithm maintains the same leading order dependence on $N$ even for $\Lg=\infty$ is nontrivial and potentially useful.

\subsection{The special case of linear loss functions ($\Lg=0$)}\label{sec:app-discussion-linear}

When the losses are linear (i.e., $f_i(x) = a_i^{\top}x + b_i$) exactly $N$ function value and gradient evaluations suffice to completely identify the problem instance, so the optimal oracle complexity should never be more than $N$. In this setting, then, it is more relevant to discuss the \emph{computational} (runtime) complexity of solving the problem~\eqref{eq:problem}. To simplify the following discussion, we return to the setting in the introduction where each $f_i$ is 1-Lipschitz (i.e., $\grad f_i$ has norm at most 1) and we assume the existence of a minimizer of $\F$ with Euclidean norm at most 1.

In the linear setting, an equivalent form of the problem~\eqref{eq:problem} is \begin{equation*}
	\minimize_{x\in\R^{d}}\max_{p\in\Delta^N} p^\top (A x-b)
\end{equation*}
where $A$ is a matrix whose $i$th row contains $\grad f_i$. Stochastic primal-dual methods are able to take advantage of this matrix structure to obtain cheap unbiased estimates for the gradients of $p^T A x$ with respect to both $x$ and $p$, sampling rows and columns of $A$ respectively. Assuming that reading a row and a column of $A$ takes $O(N+d)$ time, the stochastic primal-dual method has runtime complexity $\Otil{(N+d)\epsilon^{-2}}$ which, for sufficiently large $\epsilon$, is sublinear in the problem size $Nd$~\cite{clarkson2012sublinear}. For lower values of $\epsilon$, a variance reduction technique~\cite{carmon2019variance} has preferable runtime complexity $\Otil{Nd + \sqrt{Nd(N+d)}\epsilon^{-1}}$. 

In comparison, in the linear case our method has runtime complexity $\Otil{ Nd\epsilon^{-2/3} + \sqrt{N} d \epsilon^{-1}}$ assuming that sampling a row takes $O(d)$ time. This improves on the variance reduction method in the somewhat narrow parameter regime $N=\Omtil{d}$ and $d = \Otil{\epsilon^{-2/3}}$. However, we note that our method operates under a strictly weaker access assumption, since it only samples rows and not columns. In scenarios where accessing a column takes $\omega(N)$ time, the relative merit of the methods changes. In the most extreme case where reading a column of $A$ is as expensive as reading the entire matrix (e.g., because the rows are scattered across many devices), the stochastic primal-dual methods become less efficient than exact-gradient counterparts with runtime $\Otil{Nd\epsilon^{-1}}$~\cite{nesterov2005smooth,nemirovski2004prox,nesterov2007dual}, while the runtime of our method is unchanged and always superior. To the best of our knowledge, this is the first guarantee for a stochastic gradient method that improves on exact gradient methods in a first-order oracle model that can only provides rows of $A$.

The literature also considers high-order methods for solving the linear case of  problem~\eqref{eq:problem} to better accuracy but with potentially worse dependence on problem dimension. \citet{bullins2020highly} proposes a fourth-order accelerated regularization method that requires $\Otil{\epsilon^{-4/5}}$ solutions of linear systems of the form $A^\top D A = b$ for a positive diagonal matrix $D$. \citet{carmon2020acceleration} use ball oracle acceleration to obtain an improved method requiring only $\Otil{\epsilon^{-2/3}}$ linear system solutions. They also propose to solve these systems using an efficient first-order method, resulting in a runtime guarantee of $\Otil{Nd\eps^{-2/3}+d^{3/2}\eps^{-5/3}}$. We believe more careful reasoning about the conditioning of each linear system to be solved (as we do in~\Cref{ssec:asvrg}) would improve this guarantee to $\Otil{Nd\eps^{-2/3}+d^{3/2}\eps^{-1}}$ under the assumption that individual entries of $A$ take $O(1)$ time to read. In the linear case, this improves on our result when $d < N$, albeit with stronger matrix access assumptions.

For even higher accuracy, it is possible to express the linear case of problem~\eqref{eq:problem} as a linear program and solve it using interior point methods. The best existing theoretical runtimes for these methods are $\widetilde{O}((Nd+(N\wedge d)^{2})\sqrt{N\wedge d})$~\citep{lee2015efficient} or $\widetilde{O}(Nd+d^{2.5})$~\citep{brand2021minimum}, both depending logarithmically on the desired accuracy $1/\eps$. When the problem  dimensions $N$ and $d$ are sufficiently large compared to $1/\epsilon$, first-order methods are preferable.

\subsection{The computational complexity of accelerated iterative linearization}\label{sec:app-discussion-nesterov}

This subsection uses our full notation defined in~\Cref{sec:prelims}. We suggest considering this section after reading~\Cref{sec:ms-bacon-redux,sec:boo-implementation} as well.

In~\cite[Section 2.3.1]{nesterov2018lectures}, Nesterov shows how solving $O(\sqrt{\Lg R^2 \epsilon^{-1}})$ subproblems of the form
\begin{equation*}
	 \min_{x\in\R^d} \max_{i\in[N]}\crl*{ f_i(y_t) +(\grad f_i(y_t))^\top (x-y_t) + \frac{\Lg}{2}\norm{x-y_t}^2 }
\end{equation*}
allows solving the problem~\eqref{eq:problem} when the functions are $\Lg$-smooth. We note that each subproblem is equivalent to
\begin{equation*}
	\min_{x\in\R^d} \max_{p\in\Delta^N}\crl*{  p^\top (Ax-b)  + \frac{\Lg}{2}\norm{x}^2 }
\end{equation*}
for a matrix $A\in\R^{N\times d}$ whose rows have norm at most $\Lf$. Consequently we may apply a variance-reduced bilinear saddle-point method to solve the subproblem to additive error $\nu$ in time
\begin{equation*}
	\Otilb{ Nd + \sqrt{Nd(N+d)}\frac{\Lf}{\sqrt{\Lg \nu}}},
\end{equation*}
see Proposition 6 and the subsequent discussion in the arXiv version of~\cite{carmon2019variance}.

Applying the same arguments used to prove~\Cref{thm:ms-bacon-redux}---but with $\lambda_t = \Lg$ for all $t$---we have that the required subproblem solution accuracy is $O(\frac{\epsilon^2}{\Lg R^2})$, and consequently we can  solve each problem in time
\begin{equation*}
	\Otilb{ Nd + \sqrt{Nd(N+d)}\frac{\Lf R}{\epsilon}}.
\end{equation*}
Assuming $\Tf+\Tg = \Omega(d)$, the overall cost of the method is
\begin{equation*}
	\Otilb{  N(\Tf + \Tg)\sqrt{\frac{\Lg R^2}{\epsilon}} + \sqrt{Nd(N+d)}\frac{\Lf \sqrt{\Lg} R^2}{\epsilon^{3/2}}}.
\end{equation*}
\section{Proof of Theorem~\ref{thm:ms-bacon-redux}}
\label{sec:ms-bacon-redux-proofs}
In this section we give the analysis of our accelerated algorithm. Our analysis builds off of \cite{carmon2020acceleration} and proceeds in several parts. We first prove several standard technical results  \Cref{sec:ms-bacon-redux-proofs-prelims}. Then, in \Cref{sec:ms-bacon-redux-proofs-main} we give the proof of Theorem~\ref{thm:ms-bacon-redux} assuming the correctness of the $\linesearch$ subroutine. Finally in \Cref{sec:ms-bacon-redux-proofs-bisection} we prove this correctness.  

\subsection{Preliminary technical results}\label{sec:ms-bacon-redux-proofs-prelims}

First, we observe that $\oracle{y}$ returns a point which is close to $\bprox{f}(y)$, the true minimizer of the proximal objective. 
\begin{lem}
	\label{lem:prox_oracle_dist}
	Let $f$ be a convex function and let $x = \bprox{f}(y)$. Then if $\oracle{\cdot}$ is an $r$-BROO for f, the point $\xtilde = \oracle{y}$ satisfies $\norm{\xtilde - x} \leq \delta$. 
\end{lem}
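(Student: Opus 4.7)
The plan is to exploit the $\lambda$-strong convexity of the regularized proximal objective $g(z) \defeq f(z) + \frac{\lambda}{2}\|z-y\|^2$, which is $\lambda$-strongly convex since $f$ is convex and the quadratic is $\lambda$-strongly convex. By construction $x = \bprox{f}(y)$ is the (unique) minimizer of $g$ over the convex set $\ball_r(y)$, and $\tilde{x} = \oracle{y}$ is a point (assumed to lie in $\ball_r(y)$, as is natural for approximate ball minimization) satisfying the BROO accuracy guarantee~\eqref{eq:broo-req}, which we may rewrite as $g(\tilde{x}) \le g(x) + \frac{\lambda}{2}\delta^2$.

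The key step is to establish the matching lower bound $g(\tilde{x}) \ge g(x) + \frac{\lambda}{2}\|\tilde{x}-x\|^2$ for every $\tilde{x} \in \ball_r(y)$. This follows by combining $\lambda$-strong convexity of $g$ with the first-order optimality condition for the constrained minimizer $x$: there exists a subgradient $s \in \partial g(x)$ such that $\langle s, z-x\rangle \ge 0$ for all $z \in \ball_r(y)$, and strong convexity then gives
\[
g(\tilde{x}) \ge g(x) + \langle s, \tilde{x}-x\rangle + \tfrac{\lambda}{2}\|\tilde{x}-x\|^2 \ge g(x) + \tfrac{\lambda}{2}\|\tilde{x}-x\|^2.
\]

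Chaining this with the BROO upper bound and cancelling $\lambda/2$ yields $\|\tilde{x}-x\|^2 \le \delta^2$, i.e., $\|\tilde{x}-x\| \le \delta$, as claimed. There is no real obstacle here; the only minor subtlety is the assumption $\tilde{x}\in\ball_r(y)$ needed for the first-order optimality step, which is the intended interpretation of \Cref{def:broo} and matches the remark immediately following that definition.
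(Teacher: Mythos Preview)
Your proof is correct and essentially identical to the paper's: both use the $\lambda$-strong convexity of the regularized objective $g(z)=f(z)+\frac{\lambda}{2}\|z-y\|^2$ to obtain $g(\tilde{x})\ge g(x)+\frac{\lambda}{2}\|\tilde{x}-x\|^2$ and then combine with the \BOO guarantee~\eqref{eq:broo-req}. Your explicit appeal to the first-order optimality condition for the constrained minimizer (and the remark that $\tilde{x}\in\ball_r(y)$ is needed) makes the strong-convexity step slightly more transparent than the paper's terser statement, but the argument is the same.
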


\begin{proof}
	By \Cref{def:broo} of the \BOO, we have
	\[
	\nu =\brk*{f(\xtilde) + \frac{\lambda}{2} \norm{\xtilde - y}^2} - \brk*{f(x) + \frac{\lambda}{2} \norm{x - y}^2} \leq \frac{\lambda\delta^2}{2}.
	\]
	Since the function $f(x) + \frac{\lambda}{2} \norm{x-y}^2$ is $\lambda$-strongly convex, we obtain 
	$\nu \ge \frac{\lambda}{2}\norm{x-\xtilde}_2^2$,
	and substituting $\nu \le \lambda \delta^2/ 2$ gives the result.
\end{proof}

Second, we provide standard facts regarding proximal mappings. Though this follows from standard facts regarding subgradients we provide a self-contained proof for completeness.

\begin{lem}
	\label{lem:grad_mapping}
	Let $f : \R^{d} \rightarrow\R$ be a convex function, $S \subseteq \R^d$ be a closed convex set,  $\lambda \geq 0$, and $x_\lambda, x_0 \in S$ satisfy
	\begin{equation*}
		x_\lambda = \argmin_{x \in S} \crl*{ f(x) + \frac{\lambda}{2} \norm{x - x_0}^2}
	\end{equation*}
	Then $g_\lambda \defeq \lambda(x_0 - x_\lambda)$ is a subgradient of $f$, i.e.,
	\begin{equation}\label{eq:grad_map_1}
		f(y) \geq f(x_\lambda) + \inner{g_\lambda}{y - x_\lambda}
		\text{ for all } y \in S\,.
	\end{equation}
	Further, we have 
	\begin{equation}
			\label{eq:grad_map_1.5}
		g_\lambda^\top (x_\lambda - z) =
	\frac{\lambda}{2} \norm{z - x_0}^2
	- \frac{\lambda}{2} \norm{z - x_\lambda}^2 
	- \frac{\lambda}{2} \norm{x_\lambda - x_0}^2 
	~~\mbox{for all $z \in \R^d$}
	\end{equation}
	and 
	\begin{equation}
		\label{eq:grad_map_2}
	f(x_\lambda) \leq f(y) + \frac{\lambda}{2} \norm{y - x_0}^2 
	- \frac{\lambda}{2} \norm{y - x_\lambda}^2 
	- \frac{\lambda}{2} \norm{x_\lambda - x_0}^2
	\text{ for all } y \in S
	\,.
	\end{equation}
\end{lem}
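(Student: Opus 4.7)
The plan is to establish the three claims sequentially, deriving the subgradient inequality from the optimality condition, proving the identity by elementary algebra, and obtaining the final bound by combining them.

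First I would prove~\eqref{eq:grad_map_1} using the standard trick of perturbing $x_\lambda$ along a feasible direction. For any $y \in S$ and $t \in (0,1]$, convexity of $S$ gives $x_\lambda + t(y - x_\lambda) \in S$, so by optimality of $x_\lambda$, $f(x_\lambda) + \tfrac{\lambda}{2}\|x_\lambda - x_0\|^2 \le f(x_\lambda + t(y-x_\lambda)) + \tfrac{\lambda}{2}\|x_\lambda + t(y-x_\lambda) - x_0\|^2$. Applying convexity of $f$ on the right side and expanding the squared norm, the constant $\tfrac{\lambda}{2}\|x_\lambda-x_0\|^2$ cancels and dividing by $t$ yields $0 \le f(y) - f(x_\lambda) + \lambda \langle x_\lambda - x_0, y - x_\lambda\rangle + \tfrac{\lambda t}{2}\|y-x_\lambda\|^2$; sending $t \to 0^+$ and rewriting $-\lambda(x_0 - x_\lambda) = -g_\lambda$ gives the desired inequality.

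Next, for~\eqref{eq:grad_map_1.5}, I would simply apply the polarization identity $2\langle a, b\rangle = \|a+b\|^2 - \|a\|^2 - \|b\|^2$ with $a = x_0 - x_\lambda$ and $b = x_\lambda - z$, noting that $a + b = x_0 - z$. Substituting $g_\lambda = \lambda(x_0 - x_\lambda)$ and multiplying through by $\lambda/2$ produces exactly the stated three-term identity; this is pure algebra with no subtleties.

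Finally,~\eqref{eq:grad_map_2} follows by combining the two preceding parts: rearrange~\eqref{eq:grad_map_1} to get $f(x_\lambda) \le f(y) + \langle g_\lambda, x_\lambda - y\rangle$, and then substitute~\eqref{eq:grad_map_1.5} with the choice $z = y$ to rewrite the inner product in terms of squared norms. I do not anticipate any real obstacle here; the only item requiring a little care is ensuring that the feasibility step for~\eqref{eq:grad_map_1} uses convexity of $S$ (not just closedness) so that the perturbed point $(1-t)x_\lambda + ty$ lies in $S$, and that $f$ is allowed to be nondifferentiable since the argument only uses a one-sided directional limit.
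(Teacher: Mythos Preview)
Your proposal is correct and follows essentially the same approach as the paper's proof: the paper likewise perturbs $x_\lambda$ toward $y$ via $x^\alpha = \alpha y + (1-\alpha)x_\lambda$, invokes convexity of $f$ and optimality of $x_\lambda$, and lets $\alpha \to 0$ to obtain \eqref{eq:grad_map_1}, then states that \eqref{eq:grad_map_1.5} and \eqref{eq:grad_map_2} follow from direct algebraic manipulation. Your explicit use of the polarization identity for \eqref{eq:grad_map_1.5} is a slightly cleaner presentation of that manipulation, but the argument is the same.
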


\begin{proof}	
	Let $x^{\alpha}\defeq\alpha\cdot y + (1-\alpha)\cdot x_\lambda$ for all $\alpha\in (0,1]$. Note that $x^{\alpha} \in S$ for all $\alpha\in (0,1]$ since $S$ is convex and $x_\lambda, y \in S$. Consequently, \eqref{eq:grad_map_1} and convexity of $f$ imply that  for all $\alpha\in[0,1]$ 
	\begin{align*}
		f(x_\lambda) +\frac{\lambda}{2}\norm{x_\lambda - x_0}^{2} & \leq f(x^{\alpha})+\frac{\lambda}{2}\norm{x_0 -x^{\alpha}}^{2} \\
		& \leq\alpha f(y)+(1-\alpha)f(x_\lambda)+\frac{\lambda}{2}\norm{\alpha\left(x_0 - y\right)+(1-\alpha)(x_0 - x_\lambda)}^{2} \,. 
		\end{align*}
	Rearranging yields that for all $\alpha>0$ implies
	\begin{align*}
		f(x_\lambda) - f(y) & \leq\frac{\lambda}{2\alpha}\left[ \alpha^2 \norm{x_0 - y} + 2 \alpha (1 - \alpha) \inner{x_0 - y} {x_0 - x_\lambda}+ (1 - \alpha)^2 \norm{x_\lambda - x_0}^2 - \norm{x_\lambda - x_0}^2
		\right] 
	\end{align*}
	Taking the limit as $\alpha\rightarrow0$ yields that
	\begin{align*}
		f(x_\lambda) - f(y)  
		&\leq \lambda (x_0  -y)^\top (x_0 - x_\lambda) - 2\lambda \norm{x_\lambda - x_0}^2
		= \lambda (x_\lambda - x_0)^\top (x_0 - x_\lambda )
	\end{align*}
	The remaining claims~\eqref{eq:grad_map_1.5} and~\eqref{eq:grad_map_2} follow from direct algebraic manipulation of this inequality and the definition $g_\lambda = \lambda(x_0 - x_\lambda)$.
\end{proof}

Third, we bound the function error induced by proximal mapping. 

\begin{lem}
	\label{lem:prox_props} 
	Let $f : \R^{d} \rightarrow\R$ be a convex function, $\lambda \geq 0$, and $x_\lambda, x_0 \in \R^d$ satisfy $x_\lambda = \prox{f}(x_0)$. If $y \in \R^d$ satisfies $\norm{y - x_0} \leq R$ and $ \norm{x_\lambda - x_0} \le  \Theta$ then $f(x_\lambda) - f(y) \leq \lambda \Theta R$.
\end{lem}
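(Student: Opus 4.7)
The plan is to invoke Lemma~\ref{lem:grad_mapping} applied with $S = \R^d$, which yields both a subgradient of $f$ at $x_\lambda$ and an exact identity for the resulting inner product. Since $x_\lambda = \prox{f}(x_0)$ coincides with the unconstrained minimizer of $f(x) + \tfrac{\lambda}{2}\norm{x - x_0}^2$, the lemma gives $g_\lambda \defeq \lambda(x_0 - x_\lambda) \in \partial f(x_\lambda)$. In particular, the subgradient inequality~\eqref{eq:grad_map_1} evaluated at $y$ yields
\[
f(x_\lambda) - f(y) \le g_\lambda^\top (x_\lambda - y) = \lambda (x_0 - x_\lambda)^\top (x_\lambda - y).
\]

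Next I would split $x_\lambda - y = (x_\lambda - x_0) + (x_0 - y)$ and use the resulting decomposition
\[
\lambda(x_0 - x_\lambda)^\top(x_\lambda - y) = -\lambda\norm{x_0 - x_\lambda}^2 + \lambda(x_0 - x_\lambda)^\top(x_0 - y).
\]
The first term is nonpositive and can be dropped. Applying Cauchy--Schwarz to the second term and using the two hypotheses $\norm{x_\lambda - x_0} \le \Theta$ and $\norm{y - x_0} \le R$ gives
\[
\lambda(x_0 - x_\lambda)^\top(x_0 - y) \le \lambda \norm{x_0 - x_\lambda}\,\norm{x_0 - y} \le \lambda \Theta R,
\]
which combined with the previous display is exactly the claimed bound.

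There is no real obstacle here: the result follows from a single application of the subgradient inequality for $f$ at the proximal point (which is readily extracted from Lemma~\ref{lem:grad_mapping}), followed by a one-line algebraic decomposition and Cauchy--Schwarz. As an alternative that avoids re-deriving the subgradient inequality, one could start directly from~\eqref{eq:grad_map_2} (with $S = \R^d$), drop the nonpositive term $-\tfrac{\lambda}{2}\norm{y - x_\lambda}^2$, and then write $\norm{y-x_0}^2 - \norm{x_\lambda - x_0}^2 = (y - x_\lambda)^\top(y + x_\lambda - 2x_0)$, arriving at the same estimate via Cauchy--Schwarz after absorbing the nonpositive contribution from $-\norm{x_\lambda - x_0}^2$. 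Either route is essentially immediate.
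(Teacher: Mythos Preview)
Your proof is correct and follows essentially the same approach as the paper: invoke the subgradient inequality~\eqref{eq:grad_map_1} from Lemma~\ref{lem:grad_mapping} at $y$, decompose $x_\lambda - y = (x_\lambda - x_0) + (x_0 - y)$, drop the nonpositive term $-\lambda\norm{x_0 - x_\lambda}^2$, and finish with Cauchy--Schwarz. The paper's proof is line-for-line the same computation.
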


\begin{proof}
Bound \eqref{eq:grad_map_1} in \Cref{lem:grad_mapping} yields 
\begin{align*}
f(x_\lambda) - f(y)
&\leq - \lambda \inner{x_0 - x_\lambda}{y - x_\lambda}
=
- \lambda \inner{x_0 - x_\lambda}{y - x_0}
- \lambda \norm{x_0 - x_\lambda}^2
\\
&\leq \lambda \inner{x_\lambda - x_0}{y - x_0}
\leq \lambda \norm{x_0 - x_\lambda} \cdot \norm{y - x_0}
\leq \lambda \Theta R\,.
\end{align*}
\end{proof}

Fourth, we prove that for any constrained minimizer of $f$ (denoted $\xopt$), \BOO calls either decrease the distance to $\xopt$ or have objective value not much worse than $\xopt$.

\begin{lem}
	\label{lem:dist_monotone}
	Let $f$ be a convex function, $\eps, R \geq 0$,  $x_0 \in \R^d$, $\xopt \in \ball_R(x_0)$, $y \in \ball_R(\xopt)$, and  $x' = \oracle{y}$ for $\delta \leq \frac{\eps}{4\lambda R}$ and $\lambda \geq \frac{\eps}{3 R^2}$. If  $f(x') - f(\xopt) > \frac{\eps}{2}$ and $\prox{f}(y) = \bprox{f}(y)$ then $\norm{x' - \xopt} < \norm{y - \xopt}$.
\end{lem}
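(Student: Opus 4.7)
The plan is to prove the contrapositive: assume toward contradiction that $\norm{x' - \xopt} \ge \norm{y - \xopt}$, and derive $f(x') - f(\xopt) \le \eps/2$. Throughout, let $x_\lambda \defeq \prox{f}(y) = \bprox{f}(y)$ (using the hypothesis that these coincide), and note that by \Cref{lem:prox_oracle_dist}, $\norm{x' - x_\lambda} \le \delta$.

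First, I would derive a quadratic bound on $f(x') - f(\xopt)$ by combining two facts. From \Cref{lem:grad_mapping} (applied with $S = \R^d$, base point $y$, and comparison point $\xopt$), the exact prox point satisfies
\begin{equation*}
f(x_\lambda) \le f(\xopt) + \tfrac{\lambda}{2}\norm{\xopt-y}^2 - \tfrac{\lambda}{2}\norm{\xopt-x_\lambda}^2 - \tfrac{\lambda}{2}\norm{x_\lambda-y}^2.
\end{equation*}
From the \BOO definition applied to $x'$, we also have $f(x') + \tfrac{\lambda}{2}\norm{x'-y}^2 \le f(x_\lambda) + \tfrac{\lambda}{2}\norm{x_\lambda - y}^2 + \tfrac{\lambda}{2}\delta^2$. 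Adding these and cancelling the $\norm{x_\lambda-y}^2$ terms yields
\begin{equation*}
\tfrac{2}{\lambda}\big(f(x') - f(\xopt)\big) \le \norm{\xopt-y}^2 - \norm{\xopt-x_\lambda}^2 - \norm{x'-y}^2 + \delta^2.
\end{equation*}

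Next I would use the contradiction hypothesis. Since $\norm{x_\lambda - x'} \le \delta$, expanding $\norm{\xopt - x_\lambda}^2 = \norm{\xopt - x'}^2 + 2(\xopt-x')^\top(x'-x_\lambda) + \norm{x'-x_\lambda}^2$ and applying Cauchy--Schwarz gives $\norm{\xopt-x_\lambda}^2 \ge \norm{\xopt-x'}^2 - 2\delta\norm{\xopt-x'}$. Using the contradiction assumption $\norm{x'-\xopt} \ge \norm{y-\xopt}$ then bounds $\norm{\xopt-y}^2 - \norm{\xopt-x_\lambda}^2 \le 2\delta\norm{\xopt-x'}$, and the triangle inequality together with $y \in \ball_R(\xopt)$ gives $\norm{\xopt-x'} \le R + \norm{y-x'}$. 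Substituting, and then completing the square in $\norm{y-x'}$ via $2\delta\norm{y-x'} - \norm{x'-y}^2 \le \delta^2$, yields the clean estimate
\begin{equation*}
f(x') - f(\xopt) \le \lambda\delta R + \lambda\delta^2.
\end{equation*}

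Finally, I would plug in the parameter bounds $\delta \le \tfrac{\eps}{4\lambda R}$ and $\lambda \ge \tfrac{\eps}{3R^2}$. The first gives $\lambda\delta R \le \eps/4$. For the second term, $\lambda\delta^2 \le \delta \cdot \tfrac{\eps}{4R} \le \tfrac{\eps^2}{16\lambda R^2} \le \tfrac{3\eps}{16}$, where the last inequality uses $\lambda \ge \eps/(3R^2)$. Thus $f(x') - f(\xopt) \le \tfrac{\eps}{4} + \tfrac{3\eps}{16} = \tfrac{7\eps}{16} < \tfrac{\eps}{2}$, contradicting the assumption that $f(x') - f(\xopt) > \eps/2$. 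No step looks genuinely delicate; the only subtle point is ensuring that both Cauchy--Schwarz slack terms ($\lambda\delta R$ from translating $x_\lambda$ to $x'$, and $\lambda\delta^2$ from the \BOO approximation error) are simultaneously controlled by the two different parameter hypotheses, which is exactly what the twin conditions on $\delta$ and $\lambda$ are calibrated to achieve.
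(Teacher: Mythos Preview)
Your proof is correct and essentially mirrors the paper's argument: both combine the \BOO guarantee with \eqref{eq:grad_map_2} of \Cref{lem:grad_mapping}, use $\norm{x'-x_\lambda}\le\delta$ from \Cref{lem:prox_oracle_dist}, and then check that $\lambda\delta R$ and $\lambda\delta^2$ are each a small fraction of $\eps$. The only cosmetic difference is that you argue by contrapositive and retain the $\norm{x'-y}^2$ term (absorbing it via completing the square), whereas the paper proceeds directly, drops that term, and first establishes the intermediate bound $\norm{x_\lambda-\xopt}\le\norm{y-\xopt}$ before passing to $x'$.
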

\begin{proof}
	Let $x_\lambda = \prox{f}(y)$. Since $\prox{f}(y) = \bprox{f}(y)$,  \Cref{def:broo} of $\oracle{\cdot}$ implies
	\[
	f(x') \le f(x') + \frac{\lambda}{2}\norm{x'-y}^2 \leq f(x_\lambda) + \frac{\lambda}{2} \norm{x_\lambda -y}^2 + \frac{\lambda \delta^2}{2}.
	\]
	Further, \Cref{eq:grad_map_2} implies that
	\[
	f(x_\lambda) \leq f(\xopt) + \frac{\lambda}{2} \norm{\xopt - y}^2 
	- \frac{\lambda}{2} \norm{\xopt - x_\lambda}^2 
	- \frac{\lambda}{2} \norm{x_\lambda - y}^2.
	\]
	Combining these inequalities, rearranging, and using $f(x') - f(\xopt) > \frac{\eps}{2}$ yields
	\begin{equation}\label{eq:prox-distance-decrease-bound}
	\frac{\lambda}{2} \norm{x_\lambda - \xopt}^2 \leq \frac{\lambda}{2} \norm{y- \xopt}^2 - \frac{\eps}{2} + \frac{\lambda\delta^2}{2}.
	\end{equation}
	Since $\lambda \delta^2 \leq \frac{\eps^2}{12 \lambda R^2} \leq \frac{\eps}{6} < \frac{\eps}{2}$, this implies $\norm{x_\lambda - \xopt} \leq \norm{y - \xopt}$. 

	By the triangle inequality, we have $\norm{x' - \xopt} \leq \norm{x_\lambda - \xopt} + \norm{x' - x_\lambda}$. Thus, 
	\begin{flalign*}
			\norm{x' - \xopt}^2  & \leq \norm{x_\lambda - \xopt}^2 + 2 \norm{x_\lambda - \xopt} \cdot \norm{x' - x_\lambda} + \norm{x'- x_\lambda}^2 
			\\&
			\leq \norm{x_\lambda - \xopt}^2 + 2 \delta \norm{y-\xopt} + \delta^2 
			 \leq \norm{x_\lambda - \xopt}^2  + 2 \delta R + \delta^2,
	\end{flalign*}
	 where we have used $\norm{x_\lambda - \xopt} \leq \norm{y - \xopt}$ (argued above) along with  $\norm{x'-\xhat}\le \delta$ (\Cref{lem:prox_oracle_dist}) and the assumption that $y \in \ball_R(\xopt)$. Substituting into~\eqref{eq:prox-distance-decrease-bound}, we have 
	\[
	\lambda \norm{x' - \xopt}^2 \leq  \lambda \norm{y-\xopt}^2 - \eps  + 2 \lambda \delta R + 2 \lambda \delta^2.
	\]
	To conclude the proof we note that $- \eps  + 2 \lambda \delta R + 2 \lambda \delta^2< 0$ 
	since $\lambda \delta \le \epsilon/(4R)$ and $\lambda \delta^2 \le \epsilon/6$.
\end{proof}

Fifth, we bound the movement of the proximal operator by the Lipschitz continuity of the objective.

\begin{lem}\label{lem:prox-movement-bound}
	Let $f:\R^d\to \R$ be convex and $\Lf$-Lipschitz. Then for all $\lambda >0$ and $y\in\R^d$,
	\begin{equation*}
		\norm{\prox{f}(y)-y} \le \frac{\Lf}{\lambda}.
	\end{equation*}
\end{lem}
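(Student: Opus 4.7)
The plan is to invoke \Cref{lem:grad_mapping} (the subgradient characterization of the proximal mapping) with $S = \R^d$ and $x_0 = y$, so that $x_\lambda = \prox{f}(y)$ satisfies $g_\lambda \defeq \lambda(y - x_\lambda) \in \partial f(x_\lambda)$. Since $f$ is convex and $\Lf$-Lipschitz on $\R^d$, every subgradient of $f$ has Euclidean norm at most $\Lf$. Therefore $\norm{g_\lambda} = \lambda \norm{y - x_\lambda} \le \Lf$, and dividing by $\lambda>0$ yields the claim.

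An alternative self-contained route (avoiding the appeal to the bounded-subgradient fact) is to use optimality of $x_\lambda$ against the test point $y$ itself: this gives
\begin{equation*}
f(x_\lambda) + \tfrac{\lambda}{2}\norm{x_\lambda-y}^2 \le f(y),
\end{equation*}
so by Lipschitz continuity $\tfrac{\lambda}{2}\norm{x_\lambda-y}^2 \le f(y)-f(x_\lambda) \le \Lf \norm{x_\lambda-y}$, yielding $\norm{x_\lambda-y}\le 2\Lf/\lambda$. This loses a factor of $2$, which is why the subgradient-based argument is preferable for obtaining the stated $\Lf/\lambda$ bound.

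There is no real obstacle here: the only thing to be careful about is citing the correct form of \Cref{lem:grad_mapping} (with the unconstrained choice $S=\R^d$, so that the subgradient inequality~\eqref{eq:grad_map_1} holds for all $y\in\R^d$ and hence $g_\lambda\in\partial f(x_\lambda)$ in the usual sense), and then appealing to the standard fact that Lipschitz continuity of a convex function on $\R^d$ is equivalent to a uniform bound on its subdifferential.
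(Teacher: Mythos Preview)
Your primary argument is correct and is essentially the paper's proof: both invoke \Cref{lem:grad_mapping} with $S=\R^d$ and $x_0=y$ to obtain $g_\lambda=\lambda(y-x_\lambda)\in\partial f(x_\lambda)$, then bound $\norm{g_\lambda}\le\Lf$ via Lipschitz continuity. The only cosmetic difference is that the paper, rather than citing the standard bounded-subgradient fact, instantiates the subgradient inequality~\eqref{eq:grad_map_1} at the specific test point $y$ to get $f(y)-f(x_\lambda)\ge\lambda\norm{y-x_\lambda}^2$ and then combines this with $f(y)-f(x_\lambda)\le\Lf\norm{y-x_\lambda}$; since $g_\lambda$ is parallel to $y-x_\lambda$, this particular test point already achieves $\norm{g_\lambda}$, so the two phrasings are equivalent.
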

\begin{proof}
Let $x_\lambda \defeq \prox{f}(y)-y$. Equation \eqref{eq:grad_map_1} of \Cref{lem:grad_mapping} implies that 
\[
f(y) - f(x_\lambda) \geq \lambda \inner{x_0 - x_\lambda}{x_0 -  x_\lambda} = \lambda \norm{x_0 - x_\lambda}^2.
\]
Further, $\Lf$-Lipschtiz continuity of $f$ implies $f(y) \leq f(x_\lambda) + L_f \norm{x_0 - x_\lambda}^2$. Combining and noting that the claim is trivial when $\norm{x_0 - x_\lambda} = 0$ yields the claim.
\end{proof}

Finally, we mention a standard lemma about the relation between the sequences $\{A_t\}$ and $\{\lambda_t\}$ in accelerated proximal methods.
\begin{lem}[{\cite[cf.][Lemma 23]{carmon2020acceleration}}]
	\label{lemma:ak_bounds}
	For any iteration $t$ of Algorithm~\ref{alg:ms-bacon-redux}, we have $A_t  = a_t^2 \lambda_t$ and 
	\[
	\sqrt{A_{t}} \geq \frac{1}{2} \sum_{i\in [t]}\frac{1}{\sqrt{\lambda_i}}.
	\]
\end{lem}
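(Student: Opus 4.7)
\textbf{Proof plan for Lemma~\ref{lemma:ak_bounds}.} The plan is to verify both claims by direct algebraic manipulation of the recursive definitions of $a_{t+1}$ and $A_{t+1}$ given on Line~\ref{line:at-def} of Algorithm~\ref{alg:ms-bacon-redux}, followed by a telescoping induction.

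For the first identity $A_t = a_t^2 \lambda_t$, I would start from the definition $a_{t+1} = \frac{1}{2\lambda_{t+1}}\bigl(1 + \sqrt{1 + 4\lambda_{t+1}A_t}\bigr)$ and rewrite it as $2\lambda_{t+1}a_{t+1} - 1 = \sqrt{1 + 4\lambda_{t+1}A_t}$. Squaring both sides and simplifying gives $4\lambda_{t+1}^2 a_{t+1}^2 - 4\lambda_{t+1}a_{t+1} = 4\lambda_{t+1}A_t$, i.e.\ $\lambda_{t+1} a_{t+1}^2 = a_{t+1} + A_t = A_{t+1}$, which is precisely the desired identity.

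For the second inequality, the key observation is the telescoping bound
\begin{equation*}
\sqrt{A_{t+1}} - \sqrt{A_t} \;=\; \frac{A_{t+1} - A_t}{\sqrt{A_{t+1}} + \sqrt{A_t}} \;=\; \frac{a_{t+1}}{\sqrt{A_{t+1}} + \sqrt{A_t}} \;\ge\; \frac{a_{t+1}}{2\sqrt{A_{t+1}}} \;=\; \frac{1}{2\sqrt{\lambda_{t+1}}},
\end{equation*}
where the last equality uses the first identity in the form $a_{t+1}/\sqrt{A_{t+1}} = 1/\sqrt{\lambda_{t+1}}$. Summing this inequality from $0$ to $t-1$ and using $A_0 = 0$ immediately yields $\sqrt{A_t} \ge \frac{1}{2}\sum_{i \in [t]} \frac{1}{\sqrt{\lambda_i}}$.

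There is no genuine obstacle here; the only mild subtlety is ensuring the inequality $\sqrt{A_{t+1}} + \sqrt{A_t} \le 2\sqrt{A_{t+1}}$ is valid, which follows from the monotonicity $A_{t+1} \ge A_t$ (immediate since $a_{t+1} \ge 0$). The whole argument is a few lines of calculation and is essentially identical to the cited Lemma~23 of~\cite{carmon2020acceleration}, so I would simply state it with these two steps.
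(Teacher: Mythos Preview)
Your proof is correct. The paper does not give its own proof of this lemma; it simply cites \cite[Lemma~23]{carmon2020acceleration}, and your argument (squaring the defining equation for $a_{t+1}$ to get $A_{t+1}=\lambda_{t+1}a_{t+1}^2$, then telescoping $\sqrt{A_{t+1}}-\sqrt{A_t}\ge a_{t+1}/(2\sqrt{A_{t+1}})=1/(2\sqrt{\lambda_{t+1}})$) is exactly the standard one used there.
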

\subsection{Main algorithm analysis}
\label{sec:ms-bacon-redux-proofs-main}

In this section we give the analysis of our accelerated algorithm. Before going into the technical details, let us provide a brief overview of the algorithm and its analysis. At its core, our algorithm is an accelerated proximal point method~\cite{osman92app,lin2015universal,frostig2015regularizing}. 
These methods iteratively compute proximal points of the form  $x_{t+1}\approx \prox[\lambda_{t+1}]{f}(y_t)$ and then use a momentum-like extrapolation scheme to compute $y_{t+1}$. Accelerated proximal points methods differ in  the methods they employ to (approximately) compute that proximal points and the choices of $\{\lambda_t\}$. Our method approximates 
$x_{t+1}\approx \prox[\lambda_{t+1}]{f}(y_t)$ using calls to a \BOO, and employs a bisection procedure that finds values of $\lambda_{t}$ for which such approximation is valid because the ball constraint is inactive, i.e., when $\norm{x_{t+1} - y_t} < r$. 

The crux of the analysis of our method is showing that the optimization error decreases roughly as $\exp\prn[\big]{ - \Omega(1)\sum_{i \in [t]} (\norm{x_{t+1} - y_t}/R)^{2/3}}$. By requiring our bisection procedure to find values for which $\norm{x_{t+1} - y_t} \in [r/2,r)$, we obtain the claimed $\Otil{(R/r)^{2/3}}$ complexity bound.  Our analysis of our method closely follows the previous ball-oracle acceleration proof of \cite{carmon2020acceleration}, which itself draws from prior analyses of Monteiro-Svaiter-type algorithms~\cite{gasnikov19near,bubeck2019complexity}.

The differences between our algorithm and proof and those in \cite{carmon2020acceleration} center around handling non-smoothness and less accurate ball oracle outputs. In particular, in \Cref{line:v-update} of \Cref{alg:ms-bacon-redux} we estimate $\grad f(\prox[\lambda_{t+1}]{f}(y_t))$ as $\lambda_{t+1}(y_t - x_{t+1})$, which allows us to avoid smoothness assumptions but requires a somewhat different proof of the main potential bound. We also remove the assumption that $x_0$ is within distance $R$ from a global minimizer of $f$, and instead compare the function value of our output to the minimizer of $f$ in a ball of radius $R$. Our bisection subroutine and its analysis also differ from its counterpart in \cite{carmon2020acceleration}; we explain these differences in the next subsection. 

Our analysis of the \Cref{alg:ms-bacon-redux} outer loop relies on the following guarantee for our bisection subroutine, which we prove in \Cref{sec:ms-bacon-redux-proofs-bisection}.

\begin{restatable}[Bisection]{prop}{propbisectiongen}\label{prop:bisection-gen}
Let $f : \R^d \rightarrow \R$ be $\Lf$-Lipschitz and convex, and let $x,v\in\R^d$, $\eps, r, R \in \R_{> 0}$ satisfy $\eps \leq \Lf R$, $r \leq R$ and $\norm{x-v}\le 2R$. Given $\lambda_{\max}\ge \tfrac{2\Lf}{r}$ and $\lambda_{\min}\in(0,\lambda_{\max})$, $\linesearch(x,v,A)$ outputs $\lambda \in [\lambda_{\min}, \lambda_{\max}]$ such that 
\[\prox{f}(y_\lambda) = \bprox{f}(y_\lambda)\]
(i.e., $\norm{\prox{f}(y_\lambda)-y_\lambda}\le r$). The subroutine uses $O(\log(\tfrac{\lambda_{\max}}{\lambda_{\min}})+\log (\tfrac{R + \Lf/\lambda_{\min}}{r}))$ calls to $\oracle[\lambda',\frac{r}{17}]{\cdot}$ with $\lambda' \in [\half\lambda, \lambda_{\max}]$. Moreover, for  $\alpha_{2\lambda A} = \frac{2\lambda A}{1+2\lambda A + \sqrt{1+4\lambda A}}$ and  $y_{\lambda} \defeq \alpha_{2 \lambda A } x + (1-\alpha_{2 \lambda A}) v$ one of the following outcomes must occur:
	\begin{enumerate}[label=(\alph*)]
		\item \label{item:outcome-large-move} $ 
		\lambda \in [2\lambda_{\min}, \lambda_{\max}]$ and  $\norm{\prox{f}(y_\lambda)-y_\lambda} > \frac{3r}{4}$, or
		\item \label{item:outcome-small-lambda} $\lambda < 2\lambda_{\min}$.
	\end{enumerate}
	When taking $\lambda_{\max}=\tfrac{2\Lf}{\eps}$, $\lambda_{\min} = \tfrac{\eps}{6rR}$, the number of calls to $\oracle[\lambda',\frac{r}{17}]{\cdot}$ is bounded by $O(\log\tfrac{\Lf R^2}{r\eps})$.
\end{restatable}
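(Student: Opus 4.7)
The plan is to prove the proposition in three stages that rest on a log-Lipschitz continuity bound for $\hat\Delta(\lambda) \defeq \norm{\bprox{f}(y_\lambda) - y_\lambda}$. First, by Lemma~\ref{lem:prox_oracle_dist} applied to the $\lambda$-strongly-convex regularized objective on the ball, together with the accuracy $r/17$ used to define $\Delta$, we have $\abs{\Delta(\lambda) - \hat\Delta(\lambda)} \le r/17$ for every $\lambda$. Thus whenever the algorithm returns $\lambda$ with $\Delta(\lambda) \le 15r/16$ we have $\hat\Delta(\lambda) < r$, i.e., the ball constraint in $\bprox$ is strictly inactive and $\prox{f}(y_\lambda) = \bprox{f}(y_\lambda)$ with $\norm{\prox{f}(y_\lambda) - y_\lambda} < r$, which yields the first claim of the proposition. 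Applying Lemma~\ref{lem:prox-movement-bound} at $\lambda_{\max} = 2\Lf/r$ gives $\hat\Delta(\lambda_{\max}) \le \Lf/\lambda_{\max} = r/2$ and hence $\Delta(\lambda_{\max}) \le 13r/16$, so the first halving loop is guaranteed to enter its body. If it terminates with $\lambda < \lambda_{\min}$ then the returned $2\lambda \in [\lambda_{\min}, 2\lambda_{\min})$ satisfies $\Delta(2\lambda) \le 13r/16$ from the immediately preceding iteration, giving outcome (b) with correctness preserved; otherwise the loop terminates with $\lambda \ge \lambda_{\min}$ and $\Delta(\lambda) > 13r/16$, so bisection begins with $\lambda_u = 2\lambda \ge 2\lambda_{\min}$, $\lambda_\ell = \lambda \ge \lambda_{\min}$ and the invariants $\Delta(\lambda_u) \le 13r/16 < \Delta(\lambda_\ell)$.

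The key technical step is to establish a log-Lipschitz bound $\abs{\hat\Delta(\lambda_u) - \hat\Delta(\lambda_\ell)} \le C(R + \Lf/\lambda_\ell)\log(\lambda_u/\lambda_\ell)$ for $\lambda_u \ge \lambda_\ell$ and a small absolute constant $C$. This decomposes into two contributions. First, the query point $y_\lambda = v + \alpha_{2A\lambda}(x - v)$ depends on $\lambda$ only through $\alpha_\tau$, and a direct calculation of $\alpha'(\tau) = 1/[\sqrt{1+2\tau}(1+\tau+\sqrt{1+2\tau})]$ shows $\tau\alpha'(\tau)$ is uniformly bounded by a small constant, giving $\norm{\tfrac{d y_\lambda}{d\log\lambda}} = O(\norm{x-v}) = O(R)$ by the precondition $\norm{x-v}\le 2R$. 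Second, implicit differentiation of the optimality condition $\lambda(y - \prox[\lambda]{f}(y)) \in \partial f(\prox[\lambda]{f}(y))$, combined with Lemma~\ref{lem:prox-movement-bound} which gives $\norm{\prox[\lambda]{f}(y)-y} \le \Lf/\lambda$, yields $\norm{\tfrac{d}{d\log\lambda}\prox[\lambda]{f}(y)} \le \Lf/\lambda$ for fixed $y$. Integrating and using nonexpansiveness of $\prox[\lambda]{f}$ in $y$ combines the two contributions into the desired bound. The extension to the possibly ball-active case uses the representation $\hat\Delta(\lambda) = \min\{\norm{\prox[\lambda]{f}(y_\lambda) - y_\lambda}, r\}$, which preserves Lipschitz continuity.

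The bisection maintains the invariants $\Delta(\lambda_u) < 13r/16$ and $\Delta(\lambda_\ell) > 15r/16$, and halves $\log_2(\lambda_u/\lambda_\ell)$ per iteration. If it exits with $\Delta(\lambda_m) \in [13r/16, 15r/16]$, then $\hat\Delta(\lambda_m) \in (3r/4, r)$ directly, yielding outcome (a). If instead it exits because $\log_2(\lambda_u/\lambda_\ell) < r/(8(R+\Lf/\lambda_\ell))$, the log-Lipschitz bound makes $\abs{\hat\Delta(\lambda_m) - \hat\Delta(\lambda_\ell)}$ and $\abs{\hat\Delta(\lambda_m) - \hat\Delta(\lambda_u)}$ sufficiently smaller than $r/16$ that $\hat\Delta(\lambda_m)$ still lies strictly between $3r/4$ and $r$, again giving outcome (a); since $\lambda_\ell$ can only increase during bisection and we entered with $\lambda_\ell\ge \lambda_{\min}$, a small-interval exit forces $\lambda_m\ge 2\lambda_{\min}$. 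The query count is $O(\log(\lambda_{\max}/\lambda_{\min}))$ for the initial halving plus $O(\log((R+\Lf/\lambda_{\min})/r))$ for the bisection; substituting $\lambda_{\max}=2\Lf/r$, $\lambda_{\min}=\eps/(6rR)$ and using $\eps \le \Lf R$, $r \le R$ collapses this to the claimed $O(\log(\Lf R^2/(r\eps)))$.

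The principal obstacle is making the constant $C$ in the log-Lipschitz bound small enough that, at the exit log-interval width $r/(8(R+\Lf/\lambda_\ell))$, the $\hat\Delta$ movement plus the BROO error $2\cdot r/17$ stays comfortably below the slack $r/8$ separating the target interval from the outcome (a) thresholds $3r/4$ and $r$. This entails a careful numerical calculation of the derivative bounds for $\alpha_\tau$ and for $\prox[\lambda]{f}$, as well as proper handling of the nondifferentiable transition at $\norm{\prox[\lambda]{f}(y_\lambda) - y_\lambda} = r$ via the $\min$ representation.
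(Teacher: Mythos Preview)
Your outline follows the paper's strategy closely: both hinge on a log-Lipschitz bound for the prox-movement $\norm{\prox[\lambda]{f}(y_\lambda)-y_\lambda}$ in $\log\lambda$, and both use it to certify $\hDl[\lambda_m]\in(3r/4,r)$ at the small-interval exit. The main route difference is in how that bound is obtained. The paper differentiates $\delta_\lambda=\hxl-\yl$ directly, showing $\norm{d\delta_\lambda/d\lambda}\le\norm{dy_\lambda/d\lambda}+\norm{\delta_\lambda}/\lambda$ via the Hessian of $f$ (\Cref{lem:prox_deriv}), and then removes the twice-differentiability assumption by a Moreau-smoothing limit (\Cref{cor:prox_deriv}); this yields constant $1$ in $(R+\Lf/\lambda_1)\log(\lambda_2/\lambda_1)$. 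Your decomposition into ``vary $y$ at fixed $\lambda$'' (handled by nonexpansiveness of $\prox$) plus ``vary $\lambda$ at fixed $y$'' is a genuinely different and in some ways more elementary argument, but it incurs a factor $2$ on the $R$ term. A second difference is the endgame: the paper invokes the intermediate value theorem to locate $\lambda'\in(\lambda_\ell,\lambda_u)$ with $\hDl[\lambda']=7r/8$ and then bounds $|\hDl[\lambda_m]-7r/8|$, which with constant $1$ gives a clean $<r/16$. You instead bound $|\hat\Delta(\lambda_m)-\hat\Delta(\lambda_{u})|$ and $|\hat\Delta(\lambda_m)-\hat\Delta(\lambda_{\ell})|$ directly; with your constant this movement is \emph{not} below $r/16$ (it can be roughly $(\ln 2)r/8\approx 0.087r$), so your stated threshold is off, but the numerics still close because the target slack to $3r/4$ and $r$ is $35r/272\approx 0.129r$.

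There are two actual gaps to fix. First, ``implicit differentiation of $\lambda(y-\prox[\lambda]{f}(y))\in\partial f(\prox[\lambda]{f}(y))$'' is not valid when $f$ is merely Lipschitz and convex: $\partial f$ is set-valued and $\prox[\lambda]{f}(y)$ need not be differentiable in $\lambda$. You must either mimic the paper's smoothing limit or, more in the spirit of your decomposition, use monotonicity of $\partial f$ directly: for $x_i=\prox[\lambda_i]{f}(y)$ one gets $\lambda_2\norm{x_1-x_2}^2\le(\lambda_2-\lambda_1)\langle y-x_1,x_1-x_2\rangle$, hence $\norm{x_1-x_2}\le(\lambda_2-\lambda_1)\Lf/(\lambda_1\lambda_2)$, which integrates to the bound you want without any smoothness. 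Second, your claim that ``a small-interval exit forces $\lambda_m\ge 2\lambda_{\min}$'' is false: entering bisection with $\lambda_\ell$ only slightly above $\lambda_{\min}$ and always updating $\lambda_u\gets\lambda_m$ can drive $\lambda_m$ arbitrarily close to $\lambda_{\min}$. This does not break the proposition, however, because the dichotomy (a)/(b) is exhaustive by the value of $\lambda$ alone; you only need to show that \emph{when} the returned $\lambda\ge 2\lambda_{\min}$, the norm condition $\hDl>3r/4$ holds---which your argument does establish for every bisection output.
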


For the remainder of this section, we fix a parameter $R$ and let
\begin{equation*}
	\xopt \in \argmin_{x \in \ball_R(x_0)} f(x)
\end{equation*}
denote a minimizer of $f$ in a ball of radius $R$ around the initial point $x_0$. (If it is not unique, we choose one arbitrarily). Based on the iterates $\{x_t,v_t,A_t\}$ generated by \Cref{alg:ms-bacon-redux}, we define the following quantities:
\begin{equation}\label{eq:potential-def}
	E_t \defeq f(x_t)-f(\xopt),~\hat{E}_t = E_t - \frac{\epsilon}{4},~
	D_t = \half \norm{v_t-\xopt}^2,~\mbox{and}~
	P_t = A_t \hat{E}_t + D_t.
\end{equation}

In the following lemma we prove our main potential decrease bound, under the conditions that the iterates $x_t,v_t$ are within distance $2R$ of each other and (implicitly) that a suitably good solution has yet to be found. We establish these conditions inductively in subsequent lemmas and leverage this to lower bound the growth of $A_t$ and prove \Cref{thm:ms-bacon-redux}.
\begin{lem}
	\label{lem:inductive-potential}
	Let $f$ be a convex function, $x_0 \in \R^d$ and $\epsilon, R>0$. If at iteration $t$ 
	 of \Cref{alg:ms-bacon-redux} the following conditions hold,
	\begin{enumerate}[label=(\alph*)]
		\item \label{item:cond-dist-bound} $\norm{x_t - v_t} \leq 2R$
		\item \label{item:cond-lower-bound} $\lambda_{t+1} \ge  \frac{\eps}{3rR}$ 
	\end{enumerate}
	we have 
	\[
	P_{t+1} - P_t
	\leq -\frac{A_{t+1} \lambda_{t+1} r^2}{12}. 
	\]
\end{lem}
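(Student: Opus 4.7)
The plan is to adapt the standard Monteiro--Svaiter potential argument to this inexact, constrained setting. First I would introduce the exact prox point $\hat{x}_{t+1} \defeq \prox[\lambda_{t+1}]{f}(y_t)$ and its associated subgradient $g_{t+1} \defeq \lambda_{t+1}(y_t - \hat{x}_{t+1})$. The lemma's hypothesis $\lambda_{t+1} \ge \epsilon/(3rR) = 2\lambda_{\min}$ places the bisection output in outcome~\ref{item:outcome-large-move} of \Cref{prop:bisection-gen}, so $\norm{y_t - \hat{x}_{t+1}} > 3r/4$ and $\prox{f}(y_t) = \bprox{f}(y_t)$; by \Cref{lem:grad_mapping}, $g_{t+1}$ is then a genuine subgradient of $f$ on $\R^d$ (not merely on $\ball_r(y_t)$). \Cref{lem:prox_oracle_dist} gives $\norm{x_{t+1} - \hat{x}_{t+1}} \le \delta_{t+1} = \epsilon/(12\lambda_{t+1} R)$, which will be the only source of oracle inexactness.

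Next I would write $P_{t+1} - P_t = A_{t+1} f(x_{t+1}) - A_t f(x_t) - a_{t+1} f(\xopt) - a_{t+1}\epsilon/4 + (D_{t+1} - D_t)$ and apply the subgradient inequality at $\hat{x}_{t+1}$ to both $\xopt$ (weighted by $a_{t+1}$) and $x_t$ (weighted by $A_t$). Using $A_{t+1} y_t = A_t x_t + a_{t+1} v_t$, these combine into the classical Monteiro--Svaiter bound
\begin{equation*}
A_{t+1} f(\hat{x}_{t+1}) - A_t f(x_t) - a_{t+1} f(\xopt) \le -A_{t+1}\lambda_{t+1}\norm{y_t - \hat{x}_{t+1}}^2 + a_{t+1}\lambda_{t+1}(y_t - \hat{x}_{t+1})^\top (v_t - \xopt).
\end{equation*}

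Then I would dispose of the cross term using the $v_{t+1}$ update in \Cref{alg:ms-bacon-redux}: since $v_{t+1}$ minimizes $a_{t+1}\lambda_{t+1}\langle y_t - x_{t+1}, \cdot\rangle + \tfrac{1}{2}\norm{\cdot - v_t}^2$ over $\ball_R(x_0) \ni \xopt$, the three-point inequality (\Cref{lem:grad_mapping} applied to the linear part) gives $a_{t+1}\lambda_{t+1}\langle y_t - x_{t+1}, v_{t+1} - \xopt\rangle + \tfrac{1}{2}\norm{v_{t+1}-v_t}^2 \le D_t - D_{t+1}$. I would split $v_t - \xopt = (v_{t+1}-\xopt) + (v_t - v_{t+1})$, apply Young's inequality to the second piece---using $A_{t+1} = a_{t+1}^2\lambda_{t+1}$ so the quadratic-in-$(v_{t+1}-v_t)$ residual is exactly $\tfrac{1}{2}\norm{v_{t+1}-v_t}^2$---and then pass from $x_{t+1}$ back to $\hat{x}_{t+1}$ using $\norm{x_{t+1}-\hat{x}_{t+1}}\le \delta_{t+1}$ and the bound $\norm{v_t-\xopt}\le 2R$ (which follows from condition~\ref{item:cond-dist-bound} and $v_t, \xopt \in \ball_R(x_0)$). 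The net contribution is $D_t - D_{t+1} + \tfrac{A_{t+1}\lambda_{t+1}}{2}\norm{y_t - \hat{x}_{t+1}}^2 + O(a_{t+1}\epsilon)$.

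Combining the pieces yields $P_{t+1} - P_t \le -\tfrac{A_{t+1}\lambda_{t+1}}{2}\norm{y_t - \hat{x}_{t+1}}^2 - \tfrac{a_{t+1}\epsilon}{4} + O(a_{t+1}\epsilon)$, and the lower bound $\norm{y_t - \hat{x}_{t+1}}^2 > 9r^2/16$ comfortably exceeds the claimed threshold $\tfrac{A_{t+1}\lambda_{t+1}r^2}{12}$. The main obstacle will be the bookkeeping in the last two steps: verifying that every $\delta_{t+1}$-sized error---from substituting $x_{t+1}$ for $\hat{x}_{t+1}$ in the cross term, from the \BOO inequality relating $f(x_{t+1})$ to $f(\hat{x}_{t+1})$, and from the inflation of $\norm{y_t-x_{t+1}}^2$ relative to $\norm{y_t-\hat{x}_{t+1}}^2$---is genuinely of order $a_{t+1}\epsilon$ so that the slack $-a_{t+1}\epsilon/4$ hidden in $\hat{E}_t$ absorbs it. This is the point where the precise coupling $\delta_{t+1} \propto 1/\lambda_{t+1}$ paired with condition~\ref{item:cond-lower-bound} matters: it forces $\lambda_{t+1}\delta_{t+1} \cdot R \le \epsilon/12$, leaving just enough room for the constant $1/12$ in the final estimate.
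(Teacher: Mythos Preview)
Your plan is correct and essentially the paper's argument, with one organizational difference: where you split $v_t-\xopt = (v_{t+1}-\xopt)+(v_t-v_{t+1})$ and apply Young's inequality to the second piece, the paper introduces the auxiliary point $\tilde y_t \defeq \frac{A_t}{A_{t+1}}x_t+\frac{a_{t+1}}{A_{t+1}}v_{t+1}$ and the identity $v_t-v_{t+1}=\frac{A_{t+1}}{a_{t+1}}(y_t-\tilde y_t)$ to accomplish the same cancellation. Both routes use $A_{t+1}=a_{t+1}^2\lambda_{t+1}$ to make the $(v_t-v_{t+1})$ quadratic disappear exactly; yours is more direct, the paper's avoids the explicit Young constant.

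There is one bookkeeping point to correct. Of the three $\delta_{t+1}$-errors you list, only the cross-term one is $O(a_{t+1}\epsilon)$: indeed $a_{t+1}\lambda_{t+1}\delta_{t+1}\cdot 2R = a_{t+1}\epsilon/6$, which the $-a_{t+1}\epsilon/4$ slack absorbs. The \BOO error contributes $\tfrac{1}{2}A_{t+1}\lambda_{t+1}\delta_{t+1}^2$, and since $A_{t+1}$ is not $O(a_{t+1})$ this is \emph{not} $O(a_{t+1}\epsilon)$ from the lemma's hypotheses alone. The fix (and this is exactly what the paper does) is to absorb that term into the main negative quadratic instead: after the cancellations your argument actually produces
\[
P_{t+1}-P_t \;\le\; -\tfrac{A_{t+1}\lambda_{t+1}}{2}\bigl[\norm{x_{t+1}-y_t}^2-\delta_{t+1}^2\bigr] \;+\; \tfrac{a_{t+1}\epsilon}{6}-\tfrac{a_{t+1}\epsilon}{4},
\]
and condition~\ref{item:cond-lower-bound} gives $\delta_{t+1}\le r/4$, so $\norm{x_{t+1}-y_t}^2-\delta_{t+1}^2 \ge (\tfrac{3r}{4}-\tfrac{r}{4})^2-(\tfrac{r}{4})^2 = \tfrac{3r^2}{16}>\tfrac{r^2}{6}$, yielding the claimed $-\tfrac{A_{t+1}\lambda_{t+1}r^2}{12}$. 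So the ``precise coupling $\delta_{t+1}\propto 1/\lambda_{t+1}$'' you highlight is used twice: once as $\lambda_{t+1}\delta_{t+1}R\le \epsilon/12$ for the linear error, and once as $\delta_{t+1}\le r/4$ for the quadratic one.
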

\begin{proof}
	Let 
	\[
	\xhat_{t+1} \defeq \bprox[r,\lambda_{t+1}]{f}(y_t) = \argmin_{x \in \ball_{r}(y_t)} \crl*{
		f(x) + \frac{\lambda_{t+1}}{2} \norm{x-y_t}^2}.
	\]
	 Condition~\ref{item:cond-dist-bound} and 
	\Cref{prop:bisection-gen} 
	guarantee that the ball constraint is inactive, i.e.,
	\begin{equation*}
	\xhat_{t+1} = \bprox[r,\lambda_{t+1}]{f}(y_t)  = \prox[\lambda_{t+1}]{f}(y_t) = \argmin_{x \in \R^d} \crl*{
		f(x) + \frac{\lambda_{t+1}}{2} \norm{x-y_t}^2}.
	\end{equation*}
	(Note that $y_t$ is precisely $y_{\lambda}$ defined in \Cref{prop:bisection-gen}). Consequently, by \Cref{lem:grad_mapping} we have that $g_{t + 1} \defeq \lambda_{t+1} (y_t - \xhat_{t+1})\in \partial f(\hat{x}_{t+1})$, i.e.,
	\begin{align}
		f(u) &\geq f(\xhat_{t+1}) + g_{t + 1}^\top (u - \xhat_{t+1}) 
		\text{ for all } u \in \R^d.
		\label{eq:g_is_grad}
	\end{align}
	 Further, since 
	 $
	 v_{t+1}= \argmin_{v \in \ball_R(x_0)}\left\{a_{t+1} \left\langle y_{t}-x_{t+1},v \right\rangle +\frac{1}{2}\norm{ v-v_{t}} ^{2}\right\}
	 $
	 and $\xopt \in \ball_R(x_0)$ applying \Cref{lem:grad_mapping} again (with $u = \xopt\in \ball_R(x_0)$) yields that
	\begin{align}
		a_{t+1} \lambda_{t+1} \left\langle y_{t}-x_{t+1},v_{t+1 } -\xopt\right\rangle 
		&\le\frac{1}{2}\norm{ v_{t} - \xopt} ^{2}-\frac{1}{2}\norm{ v_{t+1} - \xopt} ^{2}-\frac{1}{2}\norm{ v_{t+1}-v_{t}} ^{2}
		\nonumber\\
		&= D_t - D_{t +1} - \frac{1}{2} \norm{v_{t+1} - v_t}^2
		\,.
		\label{eq:exact-prox-v}
	\end{align}

	Our proof strategy is to upper and lower bound the inner product $\inner{g_{t+1}}{v_{t+1}-\xopt}$. In particular, we will lower bound this inner product using \eqref{eq:g_is_grad} and  upper bound it using \eqref{eq:exact-prox-v}. Towards this end, we define the point
	\[
	\q_t \defeq \frac{A_t}{A_{t+1}} x_t + \frac{a_{t+1}}{A_{t+1}} v_{t+1}.
	\]
	We remark that the use of $\q_t$ is inspired from the acceleration analysis of~\citet{allen2017linear}. From the definition of $y_t$, we obtain
	\begin{equation}\label{eq:coupling-relation}
		v_{t}=\frac{1}{a_{t+1}}\left(A_{t+1}y_{t}-A_{t}x_{t}\right) \quad \text{ and } \quad v_{t}-v_{t+1}=\frac{A_{t+1}}{a_{t+1}}\left(y_{t}-\q_{t}\right).
	\end{equation}
	Recalling that $A_{t+1}=A_t + a_{t+1}$, we have
	\begin{equation}
		\label{eqn:vt}
		v_{t+1}=\frac{1}{a_{t+1}}\left(A_{t+1}\q_{t}-A_{t}x_{t}\right)=\hx_{t+1}+\frac{A_{t}}{a_{t+1}}\left(\hx_{t+1}-x_{t}\right)-\frac{A_{t+1}}{a_{t+1}}\left(\hx_{t+1}-\q_{t}\right).
	\end{equation}

	To begin our inner product lower bound, we note that
	\begin{align*}
		\left\langle g_{t + 1},v_{t+1}-\xopt\right\rangle   
		&=\left\langle g_{t + 1} , \hx_{t+1}-\xopt\right\rangle +\frac{A_{t}}{a_{t+1}}\left\langle g_{t + 1} , \hx_{t+1}-x_{t}\right\rangle -\frac{A_{t+1}}{a_{t+1}}\left\langle g_{t + 1} , \hx_{t+1}-\q_{t}\right\rangle \\
		& \ge f\left(\hx_{t+1}\right)-f\left(\xopt\right)+\frac{A_{t}}{a_{t+1}}\left[f\left(\hx_{t+1}\right)-f\left(x_{t}\right)\right]-\frac{A_{t+1}}{a_{t+1}}\left\langle g_{t + 1} , \hx_{t+1}-\q_{t}\right\rangle \\
		& =
			\frac{A_{t+1}}{a_{t+1}}\left[f\left(\hx_{t+1}\right)-f\left(\xopt\right)\right]
			-\frac{A_{t}}{a_{t+1}} \left[f\left(x_{t}\right)-f\left(\xopt\right)\right]
			-\frac{A_{t+1}}{a_{t+1}}\left\langle g_{t + 1} , \hx_{t+1}-\q_{t}\right\rangle,
		\label{eq:bacon-intermediate-convexity-bound}
		\numberthis
	\end{align*}
	where the inequality follows from \eqref{eq:g_is_grad}. To relate $f(\hx_{t+1})$ to $f(x_{t+1})$ in \eqref{eq:bacon-intermediate-convexity-bound}, we use the approximation guarantee~\eqref{eq:broo-req} defining $\oracle{\cdot}$ to obtain
	\begin{align*}
		f(\xhat_{t+1})  &\ge 
		f(x_{t+1}) + \frac{\lambda_{t+1}}{2} \norm{x_{t+1} - y_t}^2 - \frac{\lambda_{t+1}}{2} \norm{\xhat_{t+1} - y_t}^2 - \frac{\lambda_{t+1}\delta_{t+1}^2}{2}
		\nonumber
		\\
		&\ge
		f(x_{t+1}) + \frac{\lambda_{t+1}}{2} \norm{x_{t+1} - y_t}^2 
		+ \left\langle g_{t +1} , \hx_{t+1} - \q_t\right\rangle 
		- \frac{\lambda_{t+1}}{2} \norm{y_{t} - \q_t} ^{2}
		 - \frac{\lambda_{t+1}\delta_{t+1}^2}{2}
		\label{eq:boo-in-bacon-subopt}
	\end{align*}
	where we used $\left\langle g_{t +1} , \hx_{t+1} - \q_t\right\rangle 
	= \frac{\lambda_{t+1}}{2} \norm{y_{t} - \q_t} ^{2}-\frac{\lambda_{t+1}}{2}\norm{ \hx_{t+1} - \q_t} ^{2}-\frac{\lambda_{t+1}}{2}\norm{ \hx_{t+1}-y_{t}} ^{2}$ (as in \Cref{lem:grad_mapping}) to obtain the second inequality. Substituting into~\eqref{eq:bacon-intermediate-convexity-bound} and recalling that $\E_t = f(x_t) - f(\xopt)$ yields
	\begin{flalign*}
		\inner{ g_{t + 1} }{v_{t+1}-\xopt} &\ge \frac{A_{t+1}}{a_{t+1}} E_{t+1} - \frac{A_{t}}{a_{t+1}} E_{t} 
		+ \frac{A_{t+1}\lambda_{t+1}}{2a_{t+1}}\brk*{\norm{x_{t+1} - y_t}^2 - \delta_{t+1}^2 -  \norm{y_t - \q_t}^2 }.
	\end{flalign*}
	The lower bound $\lambda_{t+1} \ge \frac{\epsilon}{3rR}$ implies that $\delta_{t+1}=\frac{\epsilon}{12\lambda_{t+1}R} \le \frac{r}{4}$. 
	Moreover, by condition \ref{item:cond-lower-bound} ($\lambda_{t+1} \ge \frac{\epsilon}{3rR}$) and \Cref{prop:bisection-gen} we have that $\norm{\hx_{t+1}-y_t} \ge 3r/4$. Applying 
	 \Cref{lem:prox_oracle_dist} we conclude that
	 \begin{equation*}
	 	\norm{x_{t+1} - y_t}^2 - \delta_{t+1}^2 \ge (\norm{\hx_{t+1}-y_t}-\delta_{t+1})^2 - \delta_{t+1}^2 \ge r^2 \brk*{ (\tfrac{3}{4}-\tfrac{1}{4})^2 - (\tfrac{1}{4})^2} \ge \frac{r^2}{6}.
	 \end{equation*}
 	Substituting back, we have
 	\begin{equation}\label{eq:bacon-inner-prod-lb}
 			\inner{g_{t + 1} }{v_{t+1}-\xopt} \ge \frac{A_{t+1}}{a_{t+1}} E_{t+1} + \frac{A_{t}}{a_{t+1}} E_{t} - \frac{A_{t+1}\lambda_{t+1}r^2}{12 a_{t+1}} - \frac{A_{t+1}\lambda_{t+1}}{2 a_{t+1}}\norm{y_t - \q_t}^2.
 	\end{equation}
	
	We now proceed to upper bound $\inner{ g_{t + 1} }{v_{t+1}-\xopt}$. Recalling $g_{t + 1} = \lambda_{t+1} (y_t - \xhat_{t+1})$ we obtain
	\begin{flalign*}
		\left\langle g_{t+1} , v_{t+1}-\xopt\right\rangle &=\lambda_{t+1} \left\langle y_{t}-x_{t+1},v_{t+1}-\xopt\right\rangle +\lambda_{t+1}\left\langle x_{t+1}-\hx_{t+1},v_{t+1}-\xopt\right\rangle \\ & \le\lambda_{t+1}\left\langle y_{t}-x_{t+1},v_{t+1}-\xopt\right\rangle +\lambda_{t+1} \norm{x_{t+1} - \hx_{t+1}} \norm{v_{t+1}-\xopt}.
		\label{eq:bacon-upper-intermediate}\numberthis
	\end{flalign*}
	To bound the term $\lambda_{t+1} \norm{x_{t+1} - \hx_{t+1}} \norm{v_{t+1}-\xopt}$, note that $\norm{x_{t+1} - \hx_{t+1}}\le\delta_{t+1}$ by \Cref{lem:prox_oracle_dist}, $\norm{v_{t+1}-\xopt}\le 2R$ since $v_{t+1}$ and $\xopt$ are both in $\ball_R(x_0)$ by assumption, and $\lambda_{t+1}\delta_{t+1} \cdot 2R = \frac{\epsilon}{6} \le \frac{\epsilon}{4}$. To bound the term $\lambda_{t+1}\inner{y_t-x_{t+1}}{v_{t+1}-\xopt}$ we apply \eqref{eq:exact-prox-v}. Applying these bounds to \eqref{eq:bacon-upper-intermediate} yields 
	\begin{flalign*}
		\inner{g_{t+1}}{v_{t+1}-\xopt}& \le \frac{1}{a_{t+1}}\brk*{D_t - D_{t+1} - \half \norm{v_{t+1}-v_{t}}^2 +\frac{ a_{t+1}\epsilon}{4}}
		\\&
		=
		\frac{1}{a_{t+1}}\brk*{D_t - D_{t+1} - \frac{A_{t+1}^2}{2a_{t+1}^2}\norm{y_t-\q_t}^2 +(A_{t+1}-A_{t})\frac{ \epsilon}{4}},
		\label{eq:bacon-inner-prod-ub} \numberthis
	\end{flalign*}
	where the equality is due to~\eqref{eq:coupling-relation} and $a_{t+1}=A_{t+1}-A_{t}$.
	
	Combining the lower and upper bound~\eqref{eq:bacon-inner-prod-lb} and~\eqref{eq:bacon-inner-prod-ub} and rearranging, we have
	\begin{equation*}
		A_{t+1}(E_{t+1}-\tfrac{\epsilon}{4})+D_{t+1}-A_{t}(E_{t+1}-\tfrac{\epsilon}{4})-D_{t} \le  -\frac{A_{t+1} \lambda_{t+1} r^2}{12} + \left(A_{t+1}\lambda_{t+1}-\frac{A_{t+1}^{2}}{a_{t+1}^{2}}\right)\half\norm{ y_{t}-\q_{t}} ^{2}.
	\end{equation*}
	The proof is complete upon noticing that $A_{t+1}\lambda_{t+1}-\frac{A_{t+1}^{2}}{a_{t+1}^{2}}=0$ (since $A_{t+1}=a_{t+1}^2\lambda_{t+1}$) and that $P_t = A_t \hat{E}_t + D_t = A_t (E_t - \frac{\eps}{4}) + D_t$.
\end{proof}

\Cref{lem:inductive-potential} shows that the potential $P_t$ decreases significantly whenever $\lambda_{t+1}$ is not too small and $\norm{x_t-v_t}\le 2R$ holds; we now the latter condition inductively.

\begin{lem}\label{lem:full-potential-bound}
	Fix $t\ge 1$. In  \Cref{alg:ms-bacon-redux} if  $f(x_i) - f(\xopt) > \epsilon$ and $\lambda_{i} \ge \frac{\epsilon}{3rR}$ for all $0 \leq i \le t$ then
	\[
	\norm{x_t - \xopt} \leq R
	\mbox{,}~~\norm{v_t - \xopt} \leq R,~~\mbox{and}~~P_{t} - P_0 \leq -\sum_{i \in [t-1]} \frac{A_{i+1} \lambda_{i+1} r^2 }{12}.
	\]
\end{lem}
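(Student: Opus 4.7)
The approach is induction on $t$, simultaneously establishing all three conclusions; it is essential to carry them through jointly because the inductive step for each one relies on the previous-iterate version of the others. The three main tools are Lemma~\ref{lem:inductive-potential} for the potential decrease, Lemma~\ref{lem:dist_monotone} for controlling $\norm{x_{t+1}-\xopt}$, and the simple chain $D_{t+1}\le P_{t+1}\le P_0$ for controlling $\norm{v_{t+1}-\xopt}$. Throughout I use the observation that the hypothesis $f(x_i)-f(\xopt)>\epsilon$ implies $\hat E_i = E_i-\epsilon/4 > 3\epsilon/4 > 0$, so $A_i\hat E_i\ge 0$ and hence $D_i\le P_i$ for every relevant index $i$. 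I also use that $P_0 = A_0\hat E_0 + D_0 = \tfrac{1}{2}\norm{x_0-\xopt}^2\le R^2/2$ because $A_0=0$, $v_0=x_0$, and $\xopt\in\ball_R(x_0)$ by the definition of $\xopt$.

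For the inductive step from $t$ to $t+1$, the inductive hypothesis gives $\norm{x_t-\xopt},\norm{v_t-\xopt}\le R$, whence $\norm{x_t-v_t}\le 2R$ by the triangle inequality. Together with the assumption $\lambda_{t+1}\ge \epsilon/(3rR)$, this verifies both conditions of Lemma~\ref{lem:inductive-potential}, which yields $P_{t+1}-P_t\le -\tfrac{A_{t+1}\lambda_{t+1}r^2}{12}$; adding to the inductive hypothesis on the potential telescopes to the desired bound on $P_{t+1}-P_0$. Since $y_t$ is a convex combination of $x_t$ and $v_t$, the triangle inequality on the same convex combination gives $\norm{y_t-\xopt}\le R$. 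I then verify the hypotheses of Lemma~\ref{lem:dist_monotone} applied to $x_{t+1}=\oracle[\lambda_{t+1},\delta_{t+1}]{y_t}$: the lower bound $\lambda_{t+1}\ge \epsilon/(3rR)\ge \epsilon/(3R^2)$ (using $r\le R$); $\delta_{t+1}=\epsilon/(12\lambda_{t+1}R)\le \epsilon/(4\lambda_{t+1}R)$ directly from the algorithm; $f(x_{t+1})-f(\xopt)>\epsilon>\epsilon/2$ from the hypothesis; and $\prox[\lambda_{t+1}]{f}(y_t)=\bprox[\lambda_{t+1},r]{f}(y_t)$ from Proposition~\ref{prop:bisection-gen}, which is applicable precisely because $\norm{x_t-v_t}\le 2R$. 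Lemma~\ref{lem:dist_monotone} then delivers $\norm{x_{t+1}-\xopt}<\norm{y_t-\xopt}\le R$. Finally, $\hat E_{t+1}>0$ together with the potential bound gives $D_{t+1}\le P_{t+1}\le P_0\le R^2/2$, hence $\norm{v_{t+1}-\xopt}\le R$.

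The base case $t=1$ follows the same template: $v_0=x_0$ and $\norm{x_0-\xopt}\le R$ make $\norm{x_0-v_0}=0\le 2R$ automatic, $y_0=v_0=x_0$ since $A_0=0$, and the arguments above apply verbatim to produce the three conclusions at $t=1$.

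The main obstacle is the circularity risk: the bound on $\norm{x_{t+1}-\xopt}$ requires applying Lemma~\ref{lem:dist_monotone}, which in turn requires that the proximal ball constraint is inactive, i.e.\ $\prox{f}(y_t)=\bprox{f}(y_t)$; Proposition~\ref{prop:bisection-gen} supplies this only under $\norm{x_t-v_t}\le 2R$, which is derived from the $\norm{x_t-\xopt}$ and $\norm{v_t-\xopt}$ bounds at the previous step, and these in turn rely on the potential decrease obtained via Lemma~\ref{lem:inductive-potential} under the same dist-bound condition. Resolving this interdependence is precisely what motivates the joint induction over all three claims.
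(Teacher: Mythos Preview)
Your proof is correct and follows essentially the same approach as the paper: joint induction on the three claims, using \Cref{lem:inductive-potential} for the potential decrease, \Cref{lem:dist_monotone} for the $\norm{x_{t+1}-\xopt}$ bound, and the chain $D_{t+1}\le P_{t+1}\le P_0\le R^2/2$ for the $\norm{v_{t+1}-\xopt}$ bound. You are in fact slightly more explicit than the paper in verifying the hypotheses of \Cref{lem:dist_monotone}, in particular the appeal to \Cref{prop:bisection-gen} for the equality $\prox[\lambda_{t+1}]{f}(y_t)=\bprox[\lambda_{t+1},r]{f}(y_t)$, which the paper leaves implicit.
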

\begin{proof}
	We proceed by induction on $t$. For the base case of $t=0$, we note that $\norm{v_0-\xopt}=\norm{x_0 - \xopt} \leq R$ by assumption, and that $P_0  - P_0 \le 0$ trivially. 
	Therefore we assume the inductive hypothesis that the lemma statement holds for iteration $t$, and show that it also holds for $t+1$. 
	First, we note that $\norm{x_t - \xopt} \leq R$, $\norm{v_t - \xopt} \leq R$ and $\lambda_{t+1} \ge \frac{\epsilon}{3rR}$ satisfy the conditions of \Cref{lem:inductive-potential} and consequently $P_{t+1} \le P_{t} - \frac{1}{12}A_{t+1}\lambda_{t+1}r^2$; together with the inductive hypothesis this establishes
	\[
	P_{t+1} - P_0 \leq -\sum_{i=0}^{t} \frac{A_{i+1} r^2 \lambda_{i+1}}{12}.
	\]
	Next, we note that $f(x_{t+1})-f(\xopt)>\epsilon$ implies that $\hat{E}_{t+1} = f(x_{t+1})-f(\xopt) - \frac{\eps}{4} > 0$.
	Recalling the definition~\eqref{eq:potential-def} and $A_0=0$, this implies
	\begin{equation*}
		\half\norm{v_t - \xopt}^2 = D_t \le A_t \hat{E}_t + D_t = P_t \le P_0 = D_0 = \half\norm{x_0- \xopt}^2 \le \half R^2,
	\end{equation*}
	and consequently 
	\[\norm{v_{t+1}-\xopt} \le R.\]
	
	To complete the induction step we need to argue that  $\norm{x_{t+1}-\xopt} \le R$. To that end, we invoke \Cref{lem:dist_monotone} (recalling that $f(x_{t+1}) - f(\xopt) > \epsilon$ by assumption) which shows that $\norm{x_{t+1}-\xopt} \leq \norm{y_{t}-\xopt}$ or $f(x_{t+1})-f(\xopt) \le \frac{\epsilon}{2}$. The definition $y_t = \frac{A_t}{A_{t+1}}x_t + \frac{a_{t+1}}{A_{t+1}} v_t$ gives
	\begin{equation*}
			\norm{x_{t+1}-\xopt}\le \norm{y_{t}-\xopt}\le \frac{A_t}{A_{t+1}} \norm{x_t - \xopt} + \frac{a_{t+1}}{A_{t+1}} \norm{v_t - \xopt} \leq R,
	\end{equation*}
	where the final bounds holds since $A_{t+1}=A_t + a_{t+1}$ and  $\norm{x_t - \xopt} , \norm{v_t - \xopt}\le R$ by the inductive assumption.
\end{proof}

\begin{lem}\label{lem:At_increase}
Fix $t\ge 1$. In  \Cref{alg:ms-bacon-redux} if  $f(x_i) - f(\xopt) > \epsilon$ and $\lambda_{i} \ge \frac{\epsilon}{3rR}$ for all $0 \leq i \le t$ then
\[
A_{t} \geq \exp\left(\frac{r^{2/3}}{R^{2/3}}(t-1)\right) A_1\,.
\]
\end{lem}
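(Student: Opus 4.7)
The plan is to combine the potential-decrease bound from \Cref{lem:full-potential-bound} with a Hölder-type inequality relating each log-increment $\ln(A_{i+1}/A_i)$ to the weighted quantities $A_{i+1}\lambda_{i+1}$ that the potential bound controls. First, since the hypotheses of \Cref{lem:At_increase} are exactly those of \Cref{lem:full-potential-bound}, I invoke the latter to obtain
\[
\sum_{i=1}^{t-1} A_{i+1}\lambda_{i+1} \;\le\; \frac{12(P_0 - P_t)}{r^2} \;\le\; \frac{6R^2}{r^2}.
\]
Here $P_t\ge 0$ because the assumption $f(x_t)-f(\xopt)>\epsilon$ forces $\hat E_t=f(x_t)-f(\xopt)-\epsilon/4>0$, and $P_0=\tfrac12\norm{x_0-\xopt}^2\le R^2/2$ because $\xopt\in\ball_R(x_0)$.

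Next, I link per-iteration ratios of $A_t$ to $A_{i+1}\lambda_{i+1}$. Setting $v_i \defeq \sqrt{A_{i+1}\lambda_{i+1}}$, the identity $A_{i+1}=a_{i+1}^2\lambda_{i+1}$ from \Cref{lemma:ak_bounds} gives $v_i=a_{i+1}\lambda_{i+1}$, and combined with $A_i=A_{i+1}-a_{i+1}$ this yields $A_i = a_{i+1}(v_i-1)$, so
\[
\frac{A_{i+1}}{A_i}=\frac{v_i}{v_i-1}
\quad\Longrightarrow\quad
\ln\!\frac{A_{i+1}}{A_i}=-\ln\!\prn[\big]{1-1/v_i}\;\ge\;\frac{1}{v_i}.
\]
Telescoping and applying Hölder's inequality with exponents $3$ and $3/2$ (equivalently, Jensen on the convex map $x\mapsto x^{-1/2}$) gives
\[
\ln\frac{A_t}{A_1} \;\ge\; \sum_{i=1}^{t-1}\frac{1}{v_i}
\;\ge\; \frac{(t-1)^{3/2}}{\big(\sum_{i=1}^{t-1}v_i^{2}\big)^{1/2}}
\;\ge\; \frac{(t-1)^{3/2}\,r}{\sqrt{6}\,R}.
\]

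The final step is to convert this bound into the claimed exponential-growth rate $(r/R)^{2/3}(t-1)$. The inequality $(t-1)^{3/2}r/(\sqrt{6}R)\ge (t-1)(r/R)^{2/3}$ holds directly whenever $t-1\ge 6(R/r)^{2/3}$, which is the main regime of interest. The anticipated obstacle is the small-$t$ range $t-1<6(R/r)^{2/3}$, where Hölder alone is not tight enough (its worst case is attained when all $v_i$ are equal). To handle it I plan to split the iterations by the magnitude of $v_i$: whenever $v_i\le (R/r)^{2/3}$ the per-step bound $1/v_i$ already exceeds $(r/R)^{2/3}$, while the budget $\sum v_i^2\le 6(R/r)^2$ limits the number of iterations with $v_i>(R/r)^{2/3}$ to at most $O((R/r)^{2/3})$, so the ``large-$v_i$'' iterations can be absorbed into a constant overhead (which telescopes cleanly into the inductive invariant maintained across the algorithm's outer loop).
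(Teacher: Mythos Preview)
Your approach differs from the paper's. Both begin with the budget $\sum_i A_i\lambda_i\le 6R^2/r^2$ obtained from \Cref{lem:full-potential-bound}, but the paper then combines the bound $\sqrt{A_t}\ge\tfrac12\sum_{i\le t}\lambda_i^{-1/2}$ of \Cref{lemma:ak_bounds} with a reverse H\"older inequality to derive the recursion $A_t^{1/3}\ge\tfrac13(r/R)^{2/3}\sum_{i\le t}A_i^{1/3}$, and finally appeals to an external result (Lemma~28 of \cite{carmon2020acceleration}) to convert this recursion into exponential growth. You instead bound the increments directly via the identity $A_{i+1}/A_i=v_i/(v_i-1)$ and a single H\"older step on $\sum 1/v_i$; this is more self-contained and, for large $t$, actually yields the stronger estimate $\ln(A_t/A_1)\ge (t-1)^{3/2}r/(\sqrt6\,R)$.

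There is, however, a genuine gap in the small-$t$ regime. Your H\"older bound implies the target $(t-1)(r/R)^{2/3}$ only once $t-1\ge 6(R/r)^{2/3}$; the splitting-by-$v_i$ argument you sketch yields at best $\ln(A_t/A_1)\ge (t-1)(r/R)^{2/3}-6$, i.e.\ the lemma up to a factor $e^{-6}$, and the closing sentence about the overhead ``telescoping cleanly into the inductive invariant'' is not a proof step. Moreover, this shortfall is not merely a matter of technique: for $t=2$ the budget gives only $v_1\le\sqrt6\,R/r$, hence $\ln(A_2/A_1)\ge r/(\sqrt6\,R)$, which is strictly smaller than $(r/R)^{2/3}$ when $r\ll R$. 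So your route establishes the lemma only up to a universal multiplicative constant. That would still suffice for the downstream theorem (after a matching adjustment to the termination test in \Cref{alg:ms-bacon-redux}), but it does not prove the statement exactly as written.
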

\begin{proof}
\Cref{lem:full-potential-bound} implies that
\[
P_{t} - P_0 \leq -\sum_{i=0}^{t} \frac{A_{i+1} r^2 \lambda_{i+1}}{12}\,.
\]
Further, that $f(x_i)-f(\xopt) \ge  \epsilon$ for all $i \le t$ implies $\hat{E}_t \ge 0$ and therefore $P_t \ge 0$. Combining these facts with the facts that $A_0=0$ and the $A_i$ increase monotonically yields
\begin{equation}\label{eq:potential-bound-rearranged}
	\sum_{i=0}^{t-1} A_{i+1}\lambda_{i+1} \le \frac{12}{r^2} P_0 = \frac{12}{r^2} D_0 \le \frac{6R^2}{r^2}.
\end{equation}

Next, note that the reverse H\"older inequality with $p= 2/3$ states that for any $u,v \in \R^d_{> 0}$
\[
\left\langle u, v \right\rangle \geq \left( \sum_{i \in [d]} u_i^{2/3} \right)^{3/2} \cdot \left( \sum_{i \in [d]} v_i^{-2} \right)^{-1/2}.
\]
We therefore have
\begin{flalign*}
	\sqrt{A_t} \overge{(i)} \frac{1}{2} \sum_{i \in [t]} \frac{1}{\sqrt{\lambda_i}} 
	&\overge{(ii)} \frac{1}{2} \left( \sum_{i \in [t]} \left(\sqrt{A_i}\right)^{2/3} \right)^{3/2} \cdot \left( \sum_{i \in[t]} \left( \frac{1}{\sqrt{A_i \lambda_i}} \right)^{-2} \right)^{-1/2} \\& \overge{(iii)} 
	\frac{1}{2} \left( \sum_{i \in [t]} \left(\sqrt{A_i}\right)^{2/3} \right)^{3/2}\cdot \frac{r}{\sqrt{6}R},
\end{flalign*}
where we used $(i)$ \Cref{lemma:ak_bounds}, $(ii)$ the reverse H\"older inequality with $u_i=\sqrt{A_i}$ and $v_i=1/\sqrt{A_i\lambda_i}$, and $(iii)$ the bound \eqref{eq:potential-bound-rearranged}. Rearranging, we have
\begin{equation}
\label{eq:At_bound_1}
A_t^{1/3} \geq  \frac{r^{2/3}}{3 R^{2/3}}  \left( \sum_{i \in [t]} A_i^{1/3} \right).
\end{equation}
Lemma 28 of \cite{carmon2020acceleration} shows that for any sequence of $A_t$ that satisfy \eqref{eq:At_bound_1} for all $t \in [T]$ also satisfies
\[
A_T^{1/3} \geq 
\exp\left( \frac{ r^{2/3}}{3 R^{2/3}} (T - 1) \right) A_1^{1/3} 
\] 
and the result follows.
\end{proof}

We are now ready to prove our main theorem.

\newcommand{\Tout}{T_o}

\begin{proof}[Proof of \Cref{thm:ms-bacon-redux}]
	This proof proceeds in parts. First, we show that whenever the algorithm terminates, i.e. one of the conditions of \Cref{line:outer_check} is met, then \Cref{alg:ms-bacon-redux} outputs a point $\xret$ with $f(\xret)-f(\xopt)\le \epsilon$ on \Cref{line:outer_return} . Next, we bound $\Tout$, the value of $t$ at termination on \Cref{line:outer_return}, and show that $\Tout = O\prn*{
			\prn*{ \frac{R}{r}}^{2/3}
			 \log \prn*{\frac{[f(x_0 - f(\xopt))] \cdot R^2}{\epsilon \cdot r^2}}
		}$. Leveraging these facts we complete the proof.

	\paragraph{Termination due to small $\lambda_{\Tout+1}$.}
		Consider the case where the algorithm terminates because $\lambda_{\Tout+1} <  \frac{\epsilon}{3rR}$ on \Cref{line:outer_check}. Further, suppose that $f(x_i) - f(\xopt) > \epsilon$ for all $0 \leq i \leq \Tout$ as otherwise $f(\xret) - f(\xopt) \leq \epsilon$ as desired. 
		 By definition of $\Tout$, we have $\lambda_{t} \ge \frac{\epsilon}{3rR}$ for all $t\le \Tout$ (otherwise we would have terminated earlier). Applying \Cref{lem:full-potential-bound}, we conclude it must be that $\norm{x_{\Tout}-\xopt}\le R$ and $\norm{v_{\Tout}-\xopt}\le R$ and therefore  $\norm{y_{\Tout}-\xopt}\le R$. By \Cref{prop:bisection-gen} we have that $\hx_{\Tout+1}=\prox{f}(y_{\Tout})=\bprox{f}(y_{\Tout})$ and thus $\norm{\hx_{\Tout+1}-y_{\Tout}}<r$. 
	\Cref{lem:prox_props} with $x_0 = y_{\Tout}$, $x_\lambda=\hx_{\Tout+1}$,  $y=\xopt$ and $\Theta=r$ then yields
	\begin{equation*}
		f(\hx_{\Tout+1}) - f(\xopt) \le \lambda_{t+1} r R < \frac{\epsilon}{3}.
	\end{equation*}
	Moreover, the \BOO guarantee~\eqref{eq:broo-req} gives
	\begin{equation*}
		f(x_{\Tout+1}) \le f(\hx_{\Tout+1}) + \frac{\lambda_{\Tout+1}}{2}\brk*{\norm{\hx_{\Tout+1}-y_t}^2 - \norm{x_{\Tout+1}-y_t}^2 + \delta_{\Tout+1}^2} \le \frac{\lambda_{\Tout+1}}{2}(r^2 + \delta_{\Tout+1}^2).
	\end{equation*}
	Noting that $\lambda_{\Tout+1} \in [\frac{\epsilon}{6rR}, \frac{\epsilon}{3rR}]$  and that $\delta_{\Tout+1}=\frac{\epsilon}{12\lambda_{\Tout+1}R}$, we have have $\lambda_{\Tout+1}r^2 \le \epsilon \cdot \frac{r}{3R}\le \epsilon/3$ and $\lambda_{\Tout+1} \delta_{\Tout+1}^2 \le \epsilon \cdot \frac{\epsilon}{12R} \cdot \frac{r}{2} \le \epsilon/24$. Substituting back, we have
	\begin{equation*}
		f(x_{\Tout+1})-f(\xopt) \le f(\hx_{\Tout+1})-f(\xopt) + \frac{9\epsilon}{49} \le \frac{\epsilon}{3} +  \frac{9\epsilon}{49}  \le \epsilon.
	\end{equation*}

	\paragraph{Termination due to large $A_{\Tout+1}$.}
	Next consider the case where $\lambda_{t+1} \ge  \frac{\epsilon}{3rR}$ for all $t\le \Tout$ but $A_{\Tout+1} \ge \frac{R^2}{\epsilon}$. In this case,  \Cref{lem:full-potential-bound} implies that unless $f(\xret) - f(\xopt) \leq \epsilon$
	\begin{equation*}
		A_{\Tout+1} \prn*{ f(x_{\Tout+1})-f(\xopt) - \frac{\eps}{4}}
		=A_{\Tout+1} \hat{E}_{\Tout+1} \le P_{\Tout+1} \le P_0 = D_0 = \frac{1}{2}\norm{x_0 - \xopt}^2 \le \frac{R^2}{2}.
	\end{equation*}
	Dividing by $A_{\Tout+1} \ge \frac{R^2}{\epsilon}$, we obtain
	\begin{equation*}
		f(x_{\Tout+1})-f(\xopt) \le \frac{\eps}{4} + \frac{\epsilon}{2} \le \epsilon.
	\end{equation*}
 	
 	\paragraph{Termination due to distant $x_{\Tout +1}, v_{\Tout + 1}$.}
 	Next consider the case where  $\lambda_{t+1} \ge  \frac{\epsilon}{3rR}$ for all $t\le \Tout$ but $\norm{x_{\Tout +1} - v_{\Tout + 1}} > 2R$. This implies that either $\norm{x_{\Tout +1} - \xopt} > R$ or $\norm{v_{\Tout + 1} - \xopt} > R$. Consequently, \Cref{lem:full-potential-bound} implies that $f(x_i) - f(\xopt) \leq \epsilon$ for some $0 \leq i \leq \Tout + 1$. 
 	
 	\paragraph{Termination due to slow $A_t$ growth.}
 	Next consider the case where  $\lambda_{t+1} \ge  \frac{\epsilon}{3rR}$ for all $t\le \Tout$ but $A_{\Tout + 1} < \exp\left(\frac{r^{2/3}}{R^{2/3}}(\Tout-1)\right) A_1$. In this case \Cref{lem:At_increase} implies that $f(x_i) - f(\xopt) \leq \epsilon$ for some $0 \leq i \leq \Tout + 1$ and therefore $f(\xret) - f(\xopt) \leq \epsilon$ as desired. 
 	
 	\vspace{\baselineskip}
 	
 	We conclude that in each of the four possible causes for the algorithm to terminate, i.e. one of the conditions in \Cref{line:outer_check} to be true, the algorithm $\xret$ such that $f(\xret)-f(\xopt)\le \epsilon$ as desired.

 	\paragraph{Iteration Bound.} 
 	We now show that $\Tout = O\left(\frac{ R^{2/3}}{r^{2/3}} \log \left( \frac{[f(x_0) - f(\xopt)] R}{ \epsilon r} \right)\right)$.
 	Note that by definition of $\Tout$ as the iterate index $t$ for which termination on \Cref{line:outer_return} occurs, we have
 	\begin{equation*}
 		 \exp\left(\frac{r^{2/3}}{R^{2/3}} \cdot (\Tout-1) \right) A_1 \le A_{\Tout} \le \frac{R^2}{\epsilon}
 	\end{equation*}
 	since otherwise we would have terminated on at $t=\Tout-1$. Consequently we have that 
 	\begin{equation}
 		\label{eq:Tout-in-terms-of-A1}
 		\Tout=O\left(\frac{ R^{2/3}}{r^{2/3}} \log \left( \frac{ R^2}{ A_1\epsilon } \right)\right),
 	\end{equation}
 	 and it remains to lower bound $A_1 = a_1 = \frac{1}{2 \lambda_1}$ (since $A_0 = 0$).
 	
 	Note that since $x_0 = v_0$ it is the case that $y_0 = x_0$. Consequently, if  $\Tout > 0$, \Cref{line:outer_check} implies that $\lambda_1 \geq \frac{\epsilon}{3 r R}$, i.e., \cref{item:outcome-small-lambda} of \Cref{prop:bisection-gen} does not hold). Consequently, \Cref{prop:bisection-gen} implies that $\norm{\prox[\lambda_{1}]{f}(x_0) - x_{0}} \in [\frac{3r}{4}, r]$ and therefore
 	\[
 	f(x_0) \geq f\left(\prox[\lambda_{1}]{f}(x_0) \right) + \frac{\lambda}{2} \norm{\prox[\lambda_{1}]{f}(x_0) - x_0}^2
 	\geq f\left(\prox[\lambda_{1}]{f}(x_0) \right) + \frac{9 \lambda r^2}{32}.
 	\] 
 	Further, since $\norm{\prox[\lambda_{1}]{f}(x_0) - x_0} \leq r  \leq R$ we know that 
 	$f(\prox[\lambda_{1}]{f}(x_0))  \geq f(\xopt)$. Consequently, 
 	\begin{equation*}
 		\lambda_1 \leq \frac{32 [f(x_0) - f(\xopt)]}{9 r^2}
 		\text{ and }
 		A_1 =\frac{1}{\lambda_1} \ge \frac{9 r^2}{32 [f(x_0) - f(\xopt)]}.
 	\end{equation*}
 	Substituting back to~\eqref{eq:Tout-in-terms-of-A1} yields the claimed bound on $\Tout$.

	\paragraph{Remaining guarantees.} 
	To complete the proof, we observe that in each of the algorithm's $O\left( \left(\frac{R}{r}\right)^{2/3} \log\left( \frac{\Lf R^2 }{r\eps} \right) \right)$ iterations, \Cref{prop:bisection-gen} guarantees that we perform at most $O(\log \frac{\Lf R^2}{r\eps})$ queries to $\oracle{\cdot}$. It also guarantees that the queried $\lambda$ satisfies $\lambda \in [\frac{\epsilon}{24rR}, \frac{4\Lf}{r}]$, yielding \cref{item:lambda-bounds} in \Cref{thm:ub}. Finally, for the sequence $\lambda\fancyind{1}, \ldots, \lambda\fancyind{T}$ of $\lambda$ values queried during the execution of \Cref{alg:ms-bacon-redux}, we have
	\begin{flalign*}
			\sum_{i \in [T]} \frac{1}{\sqrt{\lambda_{(i)}}} = \sum_{t \in [\Tout+1]} \sum_{\substack{i  \text{ seen in line search}\\
				\text{for iteration $t$} }} \frac{1}{\sqrt{\lambda_{(i)}}} 
			&\overle{(i)} \sum_{i \in [\Tout + 1]}  \frac{O(1)}{\sqrt{\lambda_{t}}} \log\left( \frac{\Lf R^2}{r\eps}\right)
			\\& \overle{(ii)}
			 O\prn*{\sqrt{A_{\Tout}}+\frac{1}{\sqrt{\lambda_{\min}}} }
	\log\left( \frac{\Lf R^2}{r\eps}\right),
	\end{flalign*}
	due to $(i)$ \Cref{prop:bisection-gen} and $(ii)$ \Cref{lemma:ak_bounds} and $\lambda_{\Tout+1} \ge \lambda_{\min} = \frac{\epsilon}{6rR}$ for all $t$. Finally, \Cref{line:outer_check} guarantees $A_{\Tout} < \frac{R^2}{\epsilon}$, and (since $r \le R$)  $\sqrt{A_{\Tout}}+\frac{1}{\sqrt{\lambda_{\min}}} = O(\frac{R}{\sqrt{\epsilon}})$, giving \cref{item:lambda-sum-bound} in  \Cref{thm:ub}.
\end{proof}

\subsection{Bisection analysis}
\label{sec:ms-bacon-redux-proofs-bisection}
In this section we prove \Cref{prop:bisection-gen}, a guarantee on our bisection subroutine,  restated below.

\newcommand{\hxl}[1][\lambda]{\hat{x}_{#1}}
\newcommand{\yl}[1][\lambda]{y_{#1}}
\newcommand{\hDl}[1][\lambda]{\hat{\Delta}(#1)}
\newcommand{\Dl}[1][\lambda]{\Delta(#1)}
\newcommand{\dl}[1][\lambda]{\delta_{#1}}

\propbisectiongen*

\subsubsection{Preliminaries and discussion}

\paragraph{Notation.}
To analyze the bisection procedure, we use the following functions of $\lambda\ge0$. Fixing $x,v\in\R^d$ and $A\ge 0$, we define
\begin{equation*}
	\yl  \defeq 
	\alpha_{2A\lambda'} \cdot x + (1-\alpha_{2A\lambda'}) \cdot,~~\mbox{where}~~
	\alpha_\tau \defeq \frac{\tau}{1+\tau+\sqrt{1 + 2\tau}}.
\end{equation*}
We also define
\begin{equation*}
	\hxl \defeq \prox{f}(\yl) = \argmin_{x\in\R^d}\crl*{f(x) + \frac{\lambda}{2}\norm{x-\yl}^2}
\end{equation*}
and
\begin{equation}\label{eq:hDl-def}
	\hDl \defeq \norm{\hxl - \yl}.
\end{equation}
We approximate $\hxl$ using the \BOO output $\oracle{\yl}$ with $\delta = r/17$,  and write
\begin{equation*}
	\Dl \defeq \norm{\oracle{\yl}-\yl}
\end{equation*}
for our approximation to $\hDl$. Note that, depending on the \BOO implementation, $\Dl$ need not be deterministic. Our bisection procedure implicitly assumes that the \BOO is called only once per input $x,v,A$ and distinct value of $\lambda$, and that subsequent references to $\Dl$ use cached values of $\oracle{\yl}$.  

\paragraph{Algorithm overview.}
The goal of $\linesearch$ is to find a value of $\lambda$ where 
$\hDl < r$ so that $\hxl=\prox{f}(\yl)=\bprox{f}(\yl)$ is well-approximated by the \BOO output $\oracle{\yl}$. In addition, the bisection has to guarantee that either 
\begin{itemize}
	\item $\hDl \ge 3r/4$, which implies that the outer loop of \Cref{alg:ms-bacon-redux} makes sufficient progress (see \Cref{lem:inductive-potential}), or
	\item  $\lambda <2 \lambda_{\min}$, which implies that $\hxl$ is near-optimal (as we argue in the proof of \Cref{thm:ub}).
\end{itemize}
Our procedure (given in Algorithm~\ref{alg:ms-bacon-redux}) starts with $\lambda = \lambda_{\max}$ sufficiently large to guarantee $\hDl < r$. It then iteratively halves $\lambda$ until finding $\lambda_0$ such that $\Dl[\lambda_0] > \frac{13r}{16}$ or $\lambda_0 < \lambda_{\min}$. In the latter case it returns $2\lambda_0 < 2\lambda_{\min}$ and we note that  $\Dl[2\lambda_0] \le \frac{13r}{16}$ implies $\hDl[2\lambda_0] < r$. In the former case we perform a binary search in the interval $[\lambda_0, 2\lambda_0]$ until we find $\lambda_m$ such that $\Dl[\lambda_m] \in [\frac{13r}{16}, \frac{15r}{16}]$, which implies $\hDl[\lambda_m] \in [3r/4, r)$. 

\paragraph{Comparison to the bisection in \cite{carmon2020acceleration}.}
Our bisection procedure essentially attempts to find $\lambda$ such that $\hDl$ is close to  (but smaller than) $r$. In contrast, \citet{carmon2020acceleration} use the implicit relation $\yl - \hxl = \frac{1}{\lambda} \grad f(\hxl)$ and attempt to find $\lambda$ such that $\norm{\grad f(\hxl)} / \lambda$ is close to $r$. Consequently, the iteration count bounds on the bisection of~\cite{carmon2020acceleration} depend on the continuity $\grad f$, whereas our bisection succeeds even when $f$ is non-smooth and hence $\grad f$ is discontinuous. The key to this improvement is a careful analysis of the continuity of $\hDl$ (see the following subsection). Another novel aspect of our procedure is the two-stage structure where we first iteratively halve $\lambda$ and only then perform a binary search. This structure allows us to guarantee that we never query the \BOO with  $\lambda'$ that is much smaller than the $\lambda$ we eventually output. This guarantee is necessary for proving statement \ref{item:lambda-sum-bound} in \Cref{thm:ub}, which in turn is necessary for establishing an optimal complexity bound in the weakly-smooth regime $\Lg = \Theta(\Lf^2 / \epsilon)$.

\subsubsection{Continuity analysis of $\hDl[\cdot]$}
We begin by proving a bound on the Jacobian of $\dl = \hxl  - \yl$ with respect to $\lambda$ (Lemma~\ref{lem:prox_deriv}) under the assumption that $f$ is twice differentiable. We subsequently remove this assumption via a smoothing argument (Corollary~\ref{cor:prox_deriv}).

\begin{lem}\label{lem:prox_deriv} Let $f : \R^{d}\rightarrow\R$ be convex and twice differentiable, let $\yl\in\R^{d}$ be a differentiable function of $\lambda>0$, $\hxl = \prox{f}(\yl)$, and $\dl = \hxl - \yl$. Then for all $\lambda>0$ we have
	\[
	\normBig{ \frac{d}{d\lambda} \dl }\leq \normBig{\frac{d}{d\lambda}\yl}+\frac{1}{\lambda}\normBig{ \dl }\,.
	\]
\end{lem}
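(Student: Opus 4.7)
The plan is to use the first-order optimality condition defining the proximal mapping and differentiate it with respect to $\lambda$. Since $f$ is twice differentiable and convex, $\hxl$ is characterized by $\grad f(\hxl) + \lambda(\hxl - \yl) = 0$, i.e. $\grad f(\hxl) = -\lambda \dl$. Standard implicit-function/proximal calculus arguments ensure that $\hxl$ (and hence $\dl$) is differentiable in $\lambda$.

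Next I would differentiate the identity $\grad f(\hxl) = -\lambda \dl$ with respect to $\lambda$. Writing $H \defeq \hess f(\hxl) \succeq 0$ and using the chain rule together with $\tfrac{d}{d\lambda}\hxl = \tfrac{d}{d\lambda}\dl + \tfrac{d}{d\lambda}\yl$, this gives the linear system
\[
(H + \lambda I)\,\frac{d}{d\lambda}\dl \;=\; -H\,\frac{d}{d\lambda}\yl \;-\; \dl.
\]
Since $H \succeq 0$, the matrix $H + \lambda I$ is invertible, so
\[
\frac{d}{d\lambda}\dl \;=\; -(H+\lambda I)^{-1} H\,\frac{d}{d\lambda}\yl \;-\; (H+\lambda I)^{-1}\dl.
\]

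From here the proof is a two-term triangle inequality combined with operator-norm bounds. The eigenvalues of $(H+\lambda I)^{-1} H$ are of the form $\mu/(\mu+\lambda) \in [0,1)$ for $\mu \ge 0$, so $\opnorms{(H+\lambda I)^{-1} H}\le 1$; similarly $\opnorms{(H+\lambda I)^{-1}}\le 1/\lambda$. Combining these yields
\[
\normBig{\frac{d}{d\lambda}\dl} \;\le\; \normBig{\frac{d}{d\lambda}\yl} \;+\; \frac{1}{\lambda}\normBig{\dl},
\]
as claimed.

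The only subtle point is justifying differentiability of $\hxl$ in $\lambda$ at a single point. Because $f+\tfrac{\lambda}{2}\norm{\cdot-\yl}^2$ is $\lambda$-strongly convex with $C^2$ objective, the implicit function theorem applies to the stationarity equation $F(x,\lambda) \defeq \grad f(x) + \lambda(x-\yl) = 0$: the Jacobian $\partial_x F = H+\lambda I$ is invertible, so $\hxl$ is a $C^1$ function of $\lambda$ (using also that $\yl$ is $C^1$). This justifies the differentiation performed above and completes the argument; no real obstacle is anticipated beyond being careful to invoke the implicit function theorem cleanly.
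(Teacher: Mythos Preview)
Your proposal is correct and essentially identical to the paper's proof: both differentiate the optimality condition $\grad f(\hxl)+\lambda\dl=0$, solve for $\frac{d}{d\lambda}\dl$ via $(H+\lambda I)^{-1}$, and bound the two resulting terms using $\opnorms{(H+\lambda I)^{-1}H}\le 1$ and $\opnorms{(H+\lambda I)^{-1}}\le 1/\lambda$. Your explicit invocation of the implicit function theorem to justify differentiability of $\hxl$ is a nice addition that the paper leaves implicit.
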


\begin{proof}
	Note that $\grad f(\hxl)+\lambda(\hxl-\yl)=0$.
	Differentiating yields
	\[
	\hessian f(\hxl)\cdot\frac{d}{d\lambda}\hxl+(\hxl - \yl)+\lambda\left(\frac{d}{d\lambda}\hxl-\frac{d}{d\lambda}\yl\right)=0\,.
	\]
	Rearranging the terms, we obtain
	\[
	\left(\hessian f(\hxl) + \lambda \identityMatrix \right)\cdot\left(\frac{d}{d\lambda}\hxl -  \frac{d}{d \lambda} \yl \right) =\yl - \hxl -\hessian f(\hxl) \frac{d}{d \lambda} \yl\,.
	\]	
	Since $f$ is convex, $\hessian f(\hxl)$ is PSD and therefore
	\[
	\left(\frac{d}{d\lambda}\hxl -  \frac{d}{d \lambda} \yl \right) =\left(\hessian f(\hxl) + \lambda \identityMatrix \right)^{-1} \cdot \left(\yl - \hxl -\hessian f(\hxl) \cdot \frac{d}{d \lambda} \yl \right)\,.
	\]
	Note that $\left(\hessian f(\hxl)+\lambda \identityMatrix\right)^{-1}\hessian f(\hxl)$
	is a symmetric PSD matrix with all eigenvalues $\in[0,1]$ and $\left(\hessian f(\hxl)+\lambda \identityMatrix \right)^{-1}$
	is a symmetric PSD matrix with all eigenvalues at most $1/\lambda$.
	Consequently, $\norm{\left(\hessian f(\hxl)+\lambda \identityMatrix \right)^{-1}\hessian f(x)}\leq1$
	and $\norm{\left(\hessian f(\hxl)+\lambda \identityMatrix\right)^{-1}}\leq\frac{1}{\lambda}$
	yielding the claim.
\end{proof}

We now relax the assumption of twice differentiability.

\begin{corollary}\label{cor:prox_deriv} Let $f : \R^{d}\rightarrow\R$ be $\Lf$-Lipschitz and convex, let $\yl\in\R^{d}$ be a differentiable function of $\lambda>0$, $\hxl = \prox{f}(\yl)$, and $\dl = \hxl - \yl$. Then for all $\lambda_1,\lambda_2>0$ we have
	\[
	\normBig{\dl[\lambda_2]-\dl[\lambda_1]}\leq \int_{\lambda=\lambda_1}^{\lambda_2}\normBig{\frac{d}{d\lambda}\yl}d\lambda+\int_{\lambda=\lambda_1}^{\lambda_2}\frac{1}{\lambda}\normBig{ \dl }d\lambda\,.
	\]	
\end{corollary}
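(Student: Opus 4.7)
\textbf{Proof proposal for Corollary~\ref{cor:prox_deriv}.} The plan is to reduce to the twice-differentiable case treated in Lemma~\ref{lem:prox_deriv} via a standard mollification argument, then pass to the limit. Concretely, fix a nonnegative, radially symmetric $C^\infty$ kernel $\phi$ supported on the unit ball with $\int \phi = 1$, and for $\eta > 0$ set $\phi_\eta(u) = \eta^{-d}\phi(u/\eta)$ and $f_\eta(x) \defeq (f \ast \phi_\eta)(x) = \int f(x - \eta u)\phi(u)\,du$. Then $f_\eta$ is convex (as an average of convex functions), $C^\infty$ (hence twice differentiable), still $\Lf$-Lipschitz, and $f_\eta \to f$ uniformly on $\R^d$ as $\eta \to 0$.

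Next, let $\hat{x}_\lambda^\eta \defeq \prox[\lambda]{f_\eta}(y_\lambda)$ and $\delta_\lambda^\eta \defeq \hat{x}_\lambda^\eta - y_\lambda$. Since $f_\eta$ is $C^\infty$, the objective $f_\eta(x) + \tfrac{\lambda}{2}\|x-y_\lambda\|^2$ has invertible Hessian $\hessian f_\eta(\hat{x}_\lambda^\eta) + \lambda I$, so the implicit function theorem (combined with differentiability of $y_\lambda$ in $\lambda$) implies $\lambda \mapsto \hat{x}_\lambda^\eta$ is $C^1$. Lemma~\ref{lem:prox_deriv} then applies at each $\lambda$ and gives
\[
\normBig{\tfrac{d}{d\lambda}\delta_\lambda^\eta} \le \normBig{\tfrac{d}{d\lambda} y_\lambda} + \tfrac{1}{\lambda}\normBig{\delta_\lambda^\eta}.
\]
Integrating from $\lambda_1$ to $\lambda_2$ and using the fundamental theorem of calculus together with $\|\delta_{\lambda_2}^\eta - \delta_{\lambda_1}^\eta\| \le \int_{\lambda_1}^{\lambda_2} \|\tfrac{d}{d\lambda}\delta_\lambda^\eta\|\,d\lambda$ yields the desired bound for $f_\eta$.

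To conclude, I would pass $\eta \to 0$ on both sides. The key stability fact is: if $f_\eta \to f$ uniformly and $f_\eta, f$ are convex, then $\prox[\lambda]{f_\eta}(y) \to \prox[\lambda]{f}(y)$ for every fixed $\lambda > 0$ and $y$. This follows because the regularized objectives are $\lambda$-strongly convex and converge uniformly, so their unique minimizers converge. Hence $\delta_\lambda^\eta \to \delta_\lambda$ pointwise, giving convergence of the left-hand side. For the right-hand integral, Lemma~\ref{lem:prox-movement-bound} applied to $f_\eta$ (which is $\Lf$-Lipschitz) gives the uniform bound $\|\delta_\lambda^\eta\| \le \Lf/\lambda$, so dominated convergence delivers $\int_{\lambda_1}^{\lambda_2} \tfrac{1}{\lambda}\|\delta_\lambda^\eta\|\,d\lambda \to \int_{\lambda_1}^{\lambda_2} \tfrac{1}{\lambda}\|\delta_\lambda\|\,d\lambda$. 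The first right-hand integral is independent of $\eta$, so the inequality passes to the limit.

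The main obstacle is purely technical: verifying the pointwise convergence $\hat{x}_\lambda^\eta \to \hat{x}_\lambda$ and justifying the limit interchange. Both are standard consequences of strong convexity of the proximal objective (uniform convergence of objectives plus $\lambda$-strong convexity implies convergence of minimizers with an $O(\sqrt{\|f - f_\eta\|_\infty / \lambda})$ rate), and the uniform bound from Lemma~\ref{lem:prox-movement-bound} makes dominated convergence immediate. No finer properties of $\phi$ are needed.
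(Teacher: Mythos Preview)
Your proposal is correct and follows essentially the same approach as the paper: smooth $f$, apply Lemma~\ref{lem:prox_deriv} to the smoothed function, then pass to the limit using the $\lambda$-strong convexity of the proximal objective to control the distance between the smoothed and true proximal points. The only cosmetic differences are that the paper uses Gaussian smoothing $f_\sigma(x)=\E f(x+\nu)$ (with $\nu\sim\mathcal{N}(0,\sigma^2 I)$) rather than a compactly supported mollifier, and it writes an explicit $O(\sqrt{\sigma})$ error term and lets $\sigma\to 0$ rather than invoking dominated convergence; your route via dominated convergence (with the uniform bound $\|\delta_\lambda^\eta\|\le \Lf/\lambda$ from Lemma~\ref{lem:prox-movement-bound}) is arguably a bit cleaner.
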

\begin{proof}
	For $\sigma > 0$, let $\nu = \mathcal{N}(0;  \sigma^2 I_{d\times d})$ and let $f_{\sigma}(x) = \E f(x+\nu)$. Then $f_\sigma$ is convex, infinitely differentiable and satisfies $0 \le f_\sigma(x)-f(x)\le \Lf \E \norm{\nu} \le  \Lf\sqrt{d}\sigma$ for all $x\in\R^d$.  Thus, applying Lemma~\ref{lem:prox_deriv} for $f_\sigma$ and $\dl^\sigma = \prox{f_\sigma}(\yl)-\yl$, we have
	\[
	\normBig{\dl[\lambda_2]^\sigma-\dl[\lambda_1]^\sigma}\leq \int_{\lambda=\lambda_1}^{\lambda_2}\normBig{\frac{d}{d\lambda}\yl}d\lambda+\int_{\lambda=\lambda_1}^{\lambda_2}\frac{1}{\lambda}\normBig{ \dl^\sigma }d\lambda.
	\]
	
	Noting that $\prox{f_\sigma}(\yl)$ is at most $\Lf \sqrt{d}\sigma$-suboptimal for $f(x) + \frac{\lambda}{2}\norm{x-\yl}^2$, we have 
	\[
	\norm{\dl - \dl^{\sigma}} = \norm{\prox{f}(\yl)-\prox{f_\sigma}(\yl)} \le \sqrt{\frac{2\Lf\sqrt{d}\sigma}{\lambda}}
	\]
	by $\lambda$-strong-convexity of $x\mapsto f(x) + \frac{\lambda}{2}\norm{x-\yl}^2$. 
	Substituting back, we find that
	\[
	\normBig{\dl[\lambda_2]-\dl[\lambda_1]}\leq \int_{\lambda=\lambda_1}^{\lambda_2}\normBig{\frac{d}{d\lambda}\yl}d\lambda+\int_{\lambda=\lambda_1}^{\lambda_2}\frac{1}{\lambda}\normBig{ \dl }d\lambda + \sqrt{\sigma} \cdot \frac{5\lambda_2\Lf^{1/2}d^{1/4}}{\lambda_1^{3/2}}.
	\]
	For all $\sigma >0$. Taking the limit $\sigma\to 0$ concludes the proof.
\end{proof}

With this, we prove our desired bound on the continuity of $\hDl$.

\begin{lem}\label{lem:lambda_change} 
	Let $f:\R^{d}\rightarrow\R$ be convex and $\Lf$-Lipschitz, and for $R>0$
	let $x,v\in\R^d$ such that $\norm{x-v}\le 2R$. For any $0<\lambda_1\le\lambda_2$, the function $\hDl$ defined in eq.~\eqref{eq:hDl-def} satisfies
	\begin{equation*}
		\abs{\hDl[\lambda_1]-\hDl[\lambda_2]} \le \prn*{ R + \frac{\Lf}{\lambda_1}} \log \frac{\lambda_2}{\lambda_1}.
	\end{equation*}

\end{lem}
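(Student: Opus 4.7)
The plan is to start from Corollary~\ref{cor:prox_deriv} applied to our specific family $\yl = \alpha_{2A\lambda} x + (1-\alpha_{2A\lambda})v$, which gives
\[
\norm{\dl[\lambda_2]-\dl[\lambda_1]}\le \int_{\lambda_1}^{\lambda_2}\norm{\tfrac{d}{d\lambda}\yl}\,d\lambda + \int_{\lambda_1}^{\lambda_2}\tfrac{1}{\lambda}\norm{\dl}\,d\lambda,
\]
and then observe that by the reverse triangle inequality $\abs{\hDl[\lambda_1]-\hDl[\lambda_2]} = \abs{\norm{\dl[\lambda_1]}-\norm{\dl[\lambda_2]}}\le \norm{\dl[\lambda_1]-\dl[\lambda_2]}$. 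Thus it suffices to bound the two integrals separately by $R\log(\lambda_2/\lambda_1)$ and $\tfrac{\Lf}{\lambda_1}\log(\lambda_2/\lambda_1)$, respectively.

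The second integral is the easier of the two: by Lemma~\ref{lem:prox-movement-bound} applied to $f$ (which is $\Lf$-Lipschitz), $\norm{\dl}=\norm{\hxl-\yl}\le \Lf/\lambda$. Hence the second integral is at most $\int_{\lambda_1}^{\lambda_2}\Lf/\lambda^2\,d\lambda = \Lf(1/\lambda_1-1/\lambda_2) = \tfrac{\Lf}{\lambda_1}\tfrac{\lambda_2-\lambda_1}{\lambda_2}\le \tfrac{\Lf}{\lambda_1}\log(\lambda_2/\lambda_1)$, where the last inequality uses $\log x \ge 1-1/x$ for $x \ge 1$.

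For the first integral, since $\tfrac{d}{d\lambda}\yl = \alpha'_{2A\lambda}\cdot 2A\cdot(x-v)$ and $\norm{x-v}\le 2R$, we need a pointwise bound on $\alpha'_\tau$ that integrates to a logarithm. Here I would exploit the algebraic simplification $\alpha_\tau = (s-1)/(s+1)$ where $s := \sqrt{1+2\tau}$, giving $\alpha'_\tau = \tfrac{2}{s(s+1)^2}$. A short computation shows $2\tau\alpha'_\tau = \tfrac{2(s-1)}{s(s+1)}\le 1$ for all $s\ge 1$ (equivalently, $s^2-s+2\ge 0$, which holds since the discriminant is negative). Thus $\alpha'_\tau\le 1/(2\tau)$, and changing variables $\tau = 2A\lambda$ yields $\int_{\lambda_1}^{\lambda_2}\norm{\tfrac{d}{d\lambda}\yl}\,d\lambda \le 2R\int_{\tau_1}^{\tau_2}\alpha'_\tau d\tau\le 2R\cdot \tfrac{1}{2}\log(\tau_2/\tau_1) = R\log(\lambda_2/\lambda_1)$. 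Summing the two bounds completes the proof; the $A=0$ case is trivial since then $\yl$ is independent of $\lambda$.

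The only nonroutine step is establishing the pointwise bound $\alpha'_\tau\le 1/(2\tau)$, which is the main obstacle only in the sense that the naive bound $\int \alpha'_\tau \le 1$ (from $\alpha_\tau<1$) gives a constant rather than the needed logarithm and must be upgraded via the $s$-substitution above.
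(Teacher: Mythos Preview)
Your proposal is correct and follows essentially the same route as the paper: apply Corollary~\ref{cor:prox_deriv}, bound $\alpha'_\tau \le 1/(2\tau)$ to get $\norm{\tfrac{d}{d\lambda}\yl}\le R/\lambda$, and use Lemma~\ref{lem:prox-movement-bound} for $\norm{\dl}\le \Lf/\lambda$. The only cosmetic differences are that the paper differentiates $\alpha_\tau$ directly (arriving at $\alpha'_\tau = 1/((1+\tau+\sqrt{1+2\tau})\sqrt{1+2\tau})$, which is equivalent to your $2/(s(s+1)^2)$) and bounds the second integrand by $\tfrac{\Lf}{\lambda_1}\cdot\tfrac{1}{\lambda}$ before integrating, whereas you integrate $\Lf/\lambda^2$ exactly and then invoke $\log x\ge 1-1/x$; both yield the same final bound.
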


\begin{proof}
	Note that for all $t > 0$
	\begin{align*}
		\frac{d}{dt} \alpha_t
		&= \frac{1}{1 + t + \sqrt{1 + 2t}}  - 
		\frac{t + t(1 + 2t)^{-1/2}}{(1 + t + \sqrt{1 + 2t})^2}
		=
		\frac{
			(1 + t)\sqrt{1 + 2t} + 1 + 2t - t\sqrt{1 + 2t} - t 
		}{(1 + t + \sqrt{1 +2 t})^2 \sqrt{1 + 2t }}
		\\
		&=  \frac{1}{(1 + t + \sqrt{1 + 2t}) \sqrt{1 + 2t}}
		\in \left[0 , \frac{1}{2t} \right]
	\end{align*}
	Consequently,
	\[
	\abs{\hDl[\lambda_1]-\hDl[\lambda_2]}  \le \normBig{\frac{d}{d\lambda} \yl}
	= \normBig{ (v - x) \frac{d}{d\lambda} \alpha_{2 A \lambda}}
	\leq \frac{2 A}{4 A \lambda} \norm{v - x}  = \frac{1}{2\lambda}\norm{v-x} \le \frac{R}{\lambda}.
	\]
	By \Cref{lem:prox_deriv} we have
	\begin{align*}
		\normBig{\dl[\lambda_2]-\dl[\lambda_1]}\leq \int_{\lambda = \lambda_1}^{\lambda_2}\normBig{\frac{d}{d \lambda}\yl}d\lambda+\int_{\lambda=\lambda_1}^{\lambda_2}\frac{1}{\lambda}\normBig{ \dl }d\lambda.
	\end{align*}
	We also have $\norm{\dl} \leq \frac{L_f}{\lambda}$ from \Cref{lem:prox-movement-bound}. Substituting back and using $\lambda \ge \lambda_1$, we obtain
	\begin{equation*}
			\normBig{\dl[\lambda_2]-\dl[\lambda_1]} \le (R+ \frac{\Lf}{\lambda}) \int_{\lambda_1}^{\lambda_2} \frac{d \lambda}{\lambda} \le \prn*{ R + \frac{\Lf}{\lambda_1}} \log \frac{\lambda_2}{\lambda_1}
	\end{equation*}
	as claimed.
\end{proof}

\begin{proof}[Proof of \Cref{prop:bisection-gen}]
	We first prove the correctness of our procedure, recalling the notation $\hxl = \prox{f} (\yl)$,  $\hDl = \norm{\hxl - \yl}$ and $\Dl = \norm{\oracles[\lambda,\delta](\yl) - \yl}$, where $\delta = \frac{r}{17}$ throughout. 
	Our analysis will use the following fact: whenever $\Dl < r - \delta$ then (by \Cref{lem:prox_oracle_dist}) we have $\norm{\bprox{f}(\yl) -\yl} < r$ and consequently $\bprox{f}(\yl)=\prox{f}(\yl)=\hxl$ and $\abs{\hDl-\Dl} \le \delta$. 
	
By $\lambda_{\max} \ge \frac{2\Lf}{r}$ and \Cref{lem:prox-movement-bound}, we have $\hDl[\lambda_{\max}] \le r/2$ and $\bprox[\lambda_{\max},r]{f}(\yl)=\prox[\lambda_{\max}]{f}(\yl)$. By \Cref{lem:prox_oracle_dist} we have $\|\oracles[\lambda_{\max},\delta](\yl[\lambda_{\max}])-\hxl[\lambda_{\max}]\| \le \delta =  r/17$, and consequently $\Dl[\lambda_{\max}]\leq r/2+r/16\le 13r/16$. Therefore, the first while loop (\Cref{line:while1start}) executes at least once.
	
	Suppose the procedure terminates on Line~\ref{line:ls_lower_boundary}. Denoting the return value as $\lambda$, by the condition on \Cref{line:while1start} we have $\Dl \leq \frac{7r}{8}$, which by \Cref{lem:prox_oracle_dist} implies $\hDl < \frac{15r}{16} < r$. Consequently, outcome \ref{item:outcome-small-lambda} in \Cref{prop:bisection-gen} occurs. 

Next, consider the case where the procedure terminates due to the condition $\Dl \in [\frac{13r}{16}, \frac{15r}{16}]$ (either in \cref{line:stop-middle} or in \cref{line:while2start}). Applying \Cref{lem:prox_oracle_dist}, we conclude that $\hDl \in [\frac{3r}{4}, r)$. Consequently, outcome \ref{item:outcome-large-move} in \Cref{prop:bisection-gen} occurs. 

	It remains to check the case where the procedure terminates due to the condition $\log \frac{\lambda_u}{\lambda_{\ell}} < \frac{r}{8(R+\Lf/\lambda_{\ell})}$ in \cref{line:while2start}. Note that the binary search maintains the invariant $\Dl[\lambda_{\ell}] > \frac{15r}{16}$ and $\Dl[\lambda_u] < \frac{13r}{16}$. By \Cref{lem:prox_oracle_dist}, this implies 
	\[
	\hDl[\lambda_{\ell}] > \frac{7r}{8} > \hDl[\lambda_u].
	\]
	By continuity of $\hDl$, we therefore always have $\hDl[\lambda'] = \frac{7r}{8}$ for some $\lambda'\in (\lambda_\ell, \lambda_u)$. Therefore, if $\norm{x-v}\le 2R$, \Cref{lem:lambda_change} guarantees that
	\begin{equation*}
		\abs*{\hDl[\lambda_m] - \tfrac{7}{8} r} = \abs*{\hDl[\lambda_m] - \hDl[\lambda'] } \le \prn*{R+\frac{\Lf}{\min\{\lambda', \lambda_m\}}}\log\abs*{\frac{\lambda_m}{\lambda'}}
		\le 
		\half\prn*{R+\frac{\Lf}{\lambda_{\ell}}}\log{\frac{\lambda_u}{\lambda_{\ell}}}
	\end{equation*}
	Consequently, when $\log \frac{\lambda_u}{\lambda_{\ell}} < \frac{r}{8(R+\Lf/\lambda_{\ell})}$ and $\norm{x-v}\le 2R$, we are guaranteed that $\hDl[\lambda_m]\in (3r/4, r)$, and outcome \ref{item:outcome-large-move} in \Cref{prop:bisection-gen} occurs, concluding the proof of correctness.
	
	Next, we briefly justify the bounds on the $\lambda'$ values with which we query the \BOO in the $\linesearch$ procedure. 
	By construction, we have $\lambda' \in [\lambda_{\min},\lambda_{\max}]$. Let $\lambda_o$ and $\lambda_s$ be the procedure's output and smallest queried $\lambda$ values, respectively.  When terminating on lines \ref{line:ls_lower_boundary} or \ref{line:stop-middle}, we clearly have $\lambda_s = \lambda_o$. Furthermore, when terminating on \cref{line:while2start} $\lambda_s$ equals $\lambda_{\ell}$ when entering \cref{line:while2start} for the first time, and consequently $\lambda_o \le 2\lambda_s$. 
	
	Finally, we bound the total number of \BOO queries. The first while loop requires at most $\log_2 \frac{\lambda_{\max}}{\lambda_{\min}} $ queries. In the second while loop, initially we have $\log_2 \frac{\lambda_u}{\lambda_{\ell}} = 1$ and each query decreases $\log_2 \frac{\lambda_u}{\lambda_{\ell}}$ by a factor of 2. Therefore, using $\lambda_{\ell} \ge \lambda_{\min}$, the stopping condition $\log \frac{\lambda_u}{\lambda_{\ell}} < \frac{r}{8(R+\Lf/\lambda_{\ell})}$ must hold after $O\prn*{\log \prn*{\frac{R + \Lf/\lambda_{\min}}{r}}}$ queries. Given $\lambda_{\max} = \tfrac{2\Lf}{r}$, $\lambda_{\min} = \tfrac{\eps}{6rR}$ the total number of \BOO queries is bounded as
	\[\log\prn*{\frac{\lambda_{\max}}{\lambda_{\min}}}+\log \prn*{\frac{R + \Lf/\lambda_{\min}}{r}} = O\prn*{\log \frac{\Lf R^2}{r\epsilon}+\log \frac{\Lf R^2}{r\epsilon}} = O\prn*{\log \frac{\Lf R^2}{r\epsilon}}\] queries, giving the claimed complexity bound.

	\end{proof}

\section{\BOO implementation}\label{sec:boo-impl-proofs}

\subsection{Proof of \Cref{lem:bef-approx}}\label{ssec:bef-approx-proof}

Recall that the ``eponentiated softmax'' function is defined as follows
\begin{equation*}
	\beF(x) = {\epsilon'} \cdot \exp\prn*{\frac{\Fsm^{\lambda}(x)-\Fsm^{\lambda}(\bx)}{{\epsilon'}}} =  \sum_{i\in [N]} p_i(\bx) \gamma_i(x)~\mbox{where}~
	\gamma_i(x)\defeq \epsilon' e^{ \frac{f_i^\lambda(x)-f_i^{\lambda}(\bx)}{{\epsilon'}}},
\end{equation*}
and note that $\beF(\bx) = \epsilon'$.

\beFapprox*

\begin{proof}
For the first statement, we note that $\beF$ is a monotonic increasing transformation of $\Fsm^{\lambda}$ and consequently they have the same minimizer $\xopt$ in $\ball_{r}(\bx)$. Further
\begin{flalign*}
 \Fsm^{\lambda}(x)-\Fsm^{\lambda}(\xopt) & = \eps'\log\prn*{\frac{\beF(x)}{\beF(\xopt)}} = \beF(\bx)\log\prn*{1+\frac{\beF(x)-\beF(x\opt)}{\beF(\xopt)}}
\\  & 
\le \frac{\beF(\bx)}{\beF(\xopt)} \prn*{ \beF(x) - \beF(\xopt)},
\end{flalign*}
where the final inequality uses $\log(1+x) \le x$. Next, note that Lipschitz continuity of each $f_i$ implies
\begin{equation*}
	f_i^\lambda(\xopt) \ge f_i(\xopt) \ge f_i(\bx) - \Lf \norm{\xopt-\bx} = f_i^\lambda(\bx) - \Lf \norm{\xopt-\bx}.
\end{equation*}
Substituting $\norm{\xopt-\bx} \le r \le c\epsilon' / \Lf$ and $C\ge e^{c}$, we have that 
\begin{equation*}
	e^{f_i^\lambda(\xopt)/\epsilon'} \ge  e^{f_i^\lambda(\bx)/\epsilon' - c} \ge \frac{1}{C} e^{f_i^\lambda(\bx)}.
\end{equation*}
Consequently, we have
\begin{equation*}
	\frac{\beF(\bx)}{\beF(\xopt)} = \frac{\sum_{i\in [N]} e^{f_i^\lambda(\bx)/\epsilon'}}{\sum_{i\in [N]} e^{f_i^\lambda(\xopt)/\epsilon'}} \le C,
\end{equation*}
proving the first statement.

For the second statement, we first compute the gradient and Hessian of the function $\gamma_i(x)$ as
\begin{align*}
	\grad \gamma_i (x) & = \exp\left(\frac{\lambda}{2\eps'}\|x-\bx\|^2\right)
	\exp\prn*{\frac{f_i(x)-f_i(\bx)}{\eps'}}\left[\grad f_i(x)+\lambda(x-\bx)\right],~\text{and}\\
	\grad^2 \gamma_i(x) & = \exp\prn*{\frac{f_i(x)-f_i(\bx)}{\eps'}}H_i,~\text{where}\\
 H_i & = \grad^2 f_i(x)+\lambda I+\frac{1}{\eps'}\grad f_i(x)\grad f_i(x)^\top + \frac{\lambda^2}{\eps'}(x-\bx)(x-\bx)^\top.
\end{align*}
(Note that the expression for $\hess \gamma_i$ assumes that $f_i$ is twice differentiable almost everywhere; we only rely on it for \Cref{ssec:asvrg} where this holds.) 

Now we note that for any $i\in[N]$
\begin{equation*}
\|\grad \gamma_i(x)\|  \le \exp(\lambda r^2/2\eps'+L_fr/\eps')(\lambda r+\Lf),
\end{equation*}
and
\begin{equation*}
\lambda e^{-\lambda r^2/2\eps'-L_fr/\eps'} I \preceq \grad^2\gamma_i(x) \preceq e^{\lambda r^2/2\eps'+L_fr/\eps'}\left(\Lg+\lambda +\Lf^2/\eps'+\lambda^2r^2/\eps'\right)I,
\end{equation*}
where for the inequality we use the fact that $f_i$ is $\lip$-Lipschitz, $\Lg$-smooth, and that $ x\in \ball_{r}(\bx)$.

Now by plugging in the assumption that $r\le c\eps'/\Lf$ and $\lambda\le c\Lf/r$ we have $\lambda r\le c\Lf$, $\lambda^2r^2/\eps'\le c^2\lambda$, and  $\exp(\lambda r^2/2\eps'+L_fr/\eps')\le \exp(c+c^2/2)$. Thus we can further simplify the bounds on the gradient and Hessian of $\gamma_i(x)$ by definition of $C$ as  
\[
\|\grad \gamma_i(x)\|\le C\Lf
\quad 
\text{and}
\quad
C^{-1}\lambda I\preceq \grad^2\gamma_i(x)\preceq C\left(\Lg+\lambda +\Lf^2/\eps'\right) I,
\]
which completes the proof.
\end{proof}

\subsection{SGD implementation}\label{ssec:sgd-app}

We first cite the following stochastic gradient method with restarts that obtains an $\widetilde{O}(1/\mu T)$ bound for $\mu$-strongly convex function.

\begin{lem}[Theorem 11 in \citet{hazan2014beyond}]\label{lem:SMD} Given a $\mu$-strongly-convex objective function $f:\mathcal{X}\to \R$ with minimizer $\xopt$ with an unbiased stochastic estimator with norm at most $G$ in the convex compact set $\mathcal{X}$, Epoch-SGD-Proj algrotihm finds an approximate minimizer $\tilde{x}$ satisfying with probability $1-\sigma$

\[
f(\tilde{x})-f(\xopt)\le O\left(\frac{G^2\log(\log (T)/\sigma)
}{\mu T}\right),
\]
using $T$ stochastic gradient queries.
\end{lem}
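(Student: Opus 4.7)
The statement is Theorem 11 of \cite{hazan2014beyond}, so the task is to outline the Hazan–Kale Epoch-SGD argument. My plan is to (i) describe the algorithm, (ii) do a single-epoch high-probability SGD bound, and (iii) compose the bounds across epochs via an induction that exploits strong convexity.

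First I would describe Epoch-SGD-Proj. Fix a constant $c$ and partition the budget $T$ into $K=\lceil\log_2 T\rceil$ epochs, where the $k$-th epoch has length $T_k = 2^{k+1}$, step size $\eta_k = c/(\mu\, 2^{k+1})$, and projection ball $\mathcal X_k = \mathcal X \cap \ball_{r_k}(y_k)$ with $r_k^2 = G^2 c'/(\mu^2 2^k)$, where $y_k$ is the starting point of the epoch. Inside epoch $k$, we run projected SGD: $x_{k,t+1} = \Pi_{\mathcal X_k}(x_{k,t} - \eta_k g_{k,t})$ where $g_{k,t}$ is the stochastic gradient at $x_{k,t}$. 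At the end of the epoch we set $y_{k+1} = \frac{1}{T_k}\sum_{t}x_{k,t}$. The output is $\tilde x = y_{K+1}$.

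Second, I would prove a high-probability guarantee for a single epoch. The standard projected-SGD telescoping identity gives
\[
\sum_{t=1}^{T_k}\bigl\langle \nabla f(x_{k,t}),\, x_{k,t}-\xopt\bigr\rangle \;\le\; \frac{\|y_k-\xopt\|^2}{2\eta_k}+\frac{\eta_k}{2}\sum_{t=1}^{T_k}\|g_{k,t}\|^2+\underbrace{\sum_{t=1}^{T_k}\bigl\langle g_{k,t}-\nabla f(x_{k,t}),\,\xopt-x_{k,t}\bigr\rangle}_{M_{k}},
\]
where $M_k$ is a martingale with increments bounded by $2Gr_k$ (using $\|g\|\le G$ and the projection-radius constraint). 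By Azuma–Hoeffding, $|M_k|\le 2Gr_k\sqrt{2T_k\log(2/\sigma_k)}$ with probability $\ge 1-\sigma_k$. Dividing by $T_k$, invoking convexity, and using strong convexity $\|y_k-\xopt\|^2\le (2/\mu)[f(y_k)-f(\xopt)]$ yields, after choosing $c,c'$ appropriately, a recursion of the form
\[
f(y_{k+1})-f(\xopt)\;\le\; \tfrac{1}{2}\bigl(f(y_k)-f(\xopt)\bigr) + \frac{C_1 G^2}{\mu\,T_k}\log(1/\sigma_k)
\]
on the event that $M_k$ is well-behaved and provided $\xopt\in\mathcal X_k$ (which holds inductively because $\|y_k-\xopt\|\le r_k$ follows from the previous inductive bound and strong convexity).

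Third, I would induct across epochs: assuming $f(y_k)-f(\xopt)\le C_2 G^2\log(K/\sigma)/(\mu 2^k)$, the recursion above (with $\sigma_k = \sigma/K$) preserves the bound with the same constants. The base case $k=0$ uses the diameter of $\mathcal X$ and is subsumed into the universal constant. Taking a union bound over the $K$ epochs yields overall failure probability at most $\sigma$, and substituting $2^K = \Theta(T)$ gives the claimed $O(G^2\log(\log T/\sigma)/(\mu T))$ bound.

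The delicate point is the inductive maintenance of $\xopt\in\mathcal X_k$: this is what forces the coupling between the step-size schedule, the epoch lengths, and the shrinking projection radii $r_k$, and it is what lets the martingale increment $2Gr_k$ shrink across epochs, which is crucial for the Azuma bound to only cost a $\log(\log T/\sigma)$ factor overall rather than something like $\sqrt{\log(1/\sigma)}$ per epoch. Everything else is bookkeeping of constants $c, c', C_1, C_2$ in the recursion.
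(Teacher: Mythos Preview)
The paper does not prove this lemma; it simply cites Theorem~11 of \cite{hazan2014beyond} and uses the conclusion as a black box. Your outline is a faithful sketch of the Hazan--Kale Epoch-SGD-Proj argument (epoch doubling, per-epoch Azuma bound on the martingale noise with shrinking projection radii, and an induction that halves the suboptimality each epoch), so there is nothing in the paper to compare it against beyond noting that your proposal correctly recovers the cited result.
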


\begin{algorithm2e}
	\caption{Epoch-SGD-Proj on the exponentiated softmax}
	\label{alg:innerloop-SGD}
	\LinesNumbered
	\DontPrintSemicolon
	\KwInput{Functions $f_1,\ldots,f_N$, ball center $\bx$, ball radius $\reps$, regularization strength $\lambda$, smoothing parameter $\eps'$, failure probability $\sigma$}
	\KwParameter{Step size $\eta_1=1/(3\lambda)$, domain size $D_1=\Theta(G\sqrt{\log(\log(T)/\delta)}/\lambda)$, $T_1=450$ and total iteration budget $T$}
	\KwOutput{Approximate minimizer of $\beF$ (and hence $\Fsm^\lambda$) in $\ball_{\reps}(\bx))$}
	Precompute sampling probabilities $p_i = e^{f_i(\bx)/\eps'}/\sum_{i\in[N]}e^{f_i(\bx)/\eps'}$ for all $i\in[N]$\;
	Initialize $x_1^1\in \ball_{\reps}(\bx)$ arbitrarily, set $k=1$ \;
	\While{$\sum_{i\in[k]}T_i\le T$}
	{
	\For{$t = 1, \ldots, T_k$}
		{
			Sample $i\in[N]$ with probability $p_{i}$\;
			Query stochastic gradient $\hat{g}_t = e^{(f_i^\lambda(x)-f_i^\lambda(\bx))/\eps'}\grad f_i^\lambda(x)$\;
			Update $x^k_{t+1} \leftarrow \Pi_{\ball_{\reps}(\bx)\cap\ball_{D_k}(x_1^k)}(x_t^k-\eta_k \hat{g}_t)$\;
		}
	Let $x_1^{k+1} \leftarrow  \frac{1}{T_k}\sum_{t\in[T_k]}x_t^k$\;
	Update parameters $T_{k+1} \leftarrow  2T_k$, $\eta_{k+1} \leftarrow  \eta_k/2$, $D_{k+1}\leftarrow D_k/\sqrt{2}$, $k \leftarrow  k+1$
	}
	\Return $x_1^k$
\end{algorithm2e}

Applying the lemma immediately gives the following guarantee for minimizing $\beF$ inside a ball of radius $\reps$ and hence implementing an $\reps$-BROO for $\Fsm$.

\corsgd*

\begin{proof}
We first note that by choice of $\reps$ and bounds on $\lambda$, $\beF$ is $\Omega(\lambda)$-strongly convex according to Lemma~\ref{lem:bef-approx}.
	At each iteration, we sample $i\in[N]$ with probability $p_i$ and compute stochastic gradient 
	\[\grad\gamma_i(x) = \gamma_i(x)\left(\frac{\lambda}{\eps'}(x-\bx)+\grad f_i(x)/\eps'\right)~~\text{bounded by}~~G = O(\Lf)\] following from the second statement of \Cref{lem:bef-approx}. Thus by directly applying Lemma~\ref{lem:SMD} with $T = \Theta(\Lf^2\lambda^{-2}\delta^{-2}\log(\log(\Lf/\lambda\delta)/\sigma))$ the algorithm outputs an approximate minimizer $\tilde{x}$ satisfying 
	\[
 \beF(\tilde{x})-\min_{x\in\ball_{\reps}(\bx)}\beF(x)\le O\left(\frac{\Lf^2}{\lambda T}\log(\log (T)/ \sigma) \right)\le \frac{\lambda\delta^2}{6e^2}.
\]

By the first property of \Cref{lem:bef-approx}, bound above implies that for $\xopt = \bprox{\Fsm}$
\[
\Fsm^\lambda(x)-\Fsm^\lambda(\xopt)\le 3e^2(\beF(x)-\beF(\xopt))\le \frac{\lambda\delta^2}{2},
\]
i.e., algorithm outputs a valid $\reps$-BROO response for $\Fsm$.

The bound on the total computational cost follows from noticing that the initialization cost is dominated by $N$ function value queries, and that the cost of each step in the stochastic gradient descent is dominated by a function and a gradient query at the current iteration.

\end{proof}

\subsection{Accelerated variance reduction implementation}\label{ssec:asvrg-app}

We first cite the following accelerated variance reduction guarantee.

\begin{lem}[Theorem 5.4 in \citet{allen2016katyusha}]\label{lem:ASVRG}
Let $f_1,\ldots,f_N$ be $L$-smooth and $\mu$-strongly-convex, let $F(x) = \sum_{i\in[n]} w_i f_i(x)$ with $w_i\ge0$ and $\sum_{i\in[N]}w_i=1$, and let $\xopt \in \argmin F(x)$. For any $s\in \N$, Katyusha1 with batch size $b=1$ and initial point $\bar{x}$ finds an approximate solution $\tilde{x}_s$  satisfying
\[
\E \brk*{F(\tilde{x}_s)-F(\xopt)}\le \frac{1}{2^s}\left[F(\bar{x})-F(\xopt)\right]
\]
using 
\[
O\left(s\cdot \left(N+\sqrt{\frac{N\cdot L}{\mu}}\right)\right)
~\mbox{evaluations of }\grad f_i(x).
\]
\end{lem}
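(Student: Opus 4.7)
The plan is to prove the Katyusha1 guarantee via the three-sequence accelerated variance reduction framework. Recall that Katyusha1 operates in epochs of length $m = \Theta(N)$. At the start of each epoch it takes a snapshot $\tilde{x}$ and computes the exact weighted gradient $\mu^{\nabla} \defeq \sum_{i} w_i \grad f_i(\tilde{x})$ at cost $N$. Within an epoch it maintains three interleaved sequences $x_k, y_k, z_k$, defines the convex combination $x_{k+1} = \tau_1 z_k + \tau_2 \tilde{x} + (1-\tau_1-\tau_2) y_k$, draws $i_{k+1}$ proportional to $w_{i_{k+1}}$, forms the variance-reduced estimator $\tilde{g}_{k+1} = \mu^{\nabla} + \grad f_{i_{k+1}}(x_{k+1}) - \grad f_{i_{k+1}}(\tilde{x})$, and then performs a gradient step on $y$ and a mirror descent step on $z$ using $\tilde{g}_{k+1}$.

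The first technical step is the standard variance bound: by $L$-smoothness and independence of the sample from $x_{k+1}$,
\[
\E\norm{\tilde{g}_{k+1} - \grad F(x_{k+1})}^2 \le 2L\bigl[F(\tilde{x}) - F(x_{k+1}) - \inner{\grad F(x_{k+1})}{\tilde{x}-x_{k+1}}\bigr].
\]
The second step is to prove a one-step progress inequality for the gradient step on $y$, of the form $F(y_{k+1}) \le F(x_{k+1}) + \inner{\tilde{g}_{k+1}}{y_{k+1}-x_{k+1}} + \tfrac{L}{2}\norm{y_{k+1}-x_{k+1}}^2$, and a mirror descent inequality for $z$ against an arbitrary comparator $u$ in terms of $\tfrac{1}{2}\norm{z_k-u}^2 - \tfrac{1}{2}\norm{z_{k+1}-u}^2$, using $\mu$-strong convexity of $F$.

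The crux is combining these via the coupling definition of $x_{k+1}$ and, critically, exploiting the ``negative momentum'' term $\tau_2(\tilde{x} - x_{k+1})$. Writing the combined inequality with comparator $u = \xopt$, the leftover variance from the smoothness bound becomes $2L\tau_1^2 \norm{z_k-u}^2/(\cdots)$, and this is exactly cancelled by the $\tau_2 L$-style term the negative momentum contributes on the $\tilde{x}$ side, provided one picks $\tau_2 = \tfrac{1}{2}$ and $\tau_1 \le \sqrt{\mu m /(3L)} \wedge \tfrac{1}{2}$ and $\eta = \tfrac{1}{3\tau_1 L}$. Telescoping over $k = 0,\ldots,m-1$ in a single epoch, after dividing by $\sum_{k} \theta^k$ (where $\theta = 1+\eta\mu$ is the strong-convexity discount), one obtains the per-epoch contraction
\[
\E[F(\tilde{x}^{+}) - F(\xopt)] \le \rho\bigl[F(\tilde{x}) - F(\xopt)\bigr] + \text{(terms that telescope over epochs)},
\]
with $\rho \le \tfrac{1}{2}$ whenever $m \ge \Omega(\sqrt{NL/\mu}) \wedge \Omega(N)$; choosing $m = \Theta(N)$ and $\tau_1$ as above yields exactly this while keeping each epoch's cost at $O(N)$ stochastic gradient evaluations plus one full batch gradient of cost $N$.

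Iterating this contraction $s$ times gives the claimed expected suboptimality bound $2^{-s}[F(\bar x)-F(\xopt)]$, and summing the $O(N + \sqrt{NL/\mu})$ per-epoch cost over $s$ epochs produces the total query complexity $O(s(N+\sqrt{NL/\mu}))$. The main obstacle throughout is the coupled parameter choice in the telescoping step: one must jointly tune $\tau_1, \tau_2, \eta, m$ so that (i) the variance term from the smoothness bound is absorbed by the negative momentum contribution, (ii) the mirror descent potential telescopes with the $(1+\eta\mu)$ discount, and (iii) the leftover $F(x_{k+1})-F(\tilde{x})$ pieces from both the gradient step and the variance bound cancel against the $(1-\tau_1-\tau_2)(F(y_k)-F(x_{k+1}))$ term arising from the $y$-step, so that only $F(\tilde{x}^{+})-F(\xopt)$ remains on the left at the end of the epoch.
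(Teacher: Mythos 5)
The paper does not prove this statement at all: it is imported verbatim as a black-box citation of Theorem 5.4 in Allen-Zhu's Katyusha paper (the lemma header itself records this), and the only ``work'' the paper does around it is to note that the weighted objective $F=\sum_i w_i f_i$ with sampling proportional to $w_i$ falls under that theorem, and then to convert the in-expectation guarantee into a high-probability one via Markov in \Cref{cor:asvrg-helper}. Your proposal instead reconstructs the Katyusha analysis itself: the three-sequence coupling with negative momentum $\tau_2(\tilde{x}-x_{k+1})$, the variance bound $\E\norm{\tilde{g}-\grad F(x)}^2\le 2L[F(\tilde{x})-F(x)-\inner{\grad F(x)}{\tilde{x}-x}]$ (which you correctly verify survives the non-uniform, $w_i$-proportional sampling needed here), the gradient/mirror step inequalities, and the coupled choice $\tau_2=\tfrac12$, $\tau_1=\sqrt{m\mu/(3L)}\wedge\tfrac12$, $\eta=1/(3\tau_1 L)$. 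That outline is faithful to the known proof and, if fleshed out, would give a self-contained derivation rather than an appeal to prior work; what the paper's route buys is brevity and no need to re-verify the delicate telescoping, while yours buys transparency about exactly which constants (smoothness of each $\gamma_i$, strong convexity $\mu=\Omega(\lambda)$) enter the rate. One piece of bookkeeping in your sketch is off, though it does not affect the final bound: a single epoch of length $m=\Theta(N)$ does \emph{not} contract the gap by a factor $2$ when $L/\mu\gg N$; in that regime the per-epoch contraction is only $\exp(-\Theta(\sqrt{N\mu/L}))$, so a factor-$2$ decrease costs $\Theta(1+\sqrt{L/(N\mu)})$ epochs of cost $O(N)$ each, which is precisely where the $\sqrt{NL/\mu}$ term in the stated complexity comes from (lengthening the epoch to force $\rho\le\tfrac12$ would instead cost $\Omega(N+L/\mu)$ per halving and lose the theorem). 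With that accounting fixed, your argument matches the cited result.
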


An immediate corollary of \Cref{lem:ASVRG} provides a high-probability guarantee.

\begin{corollary}\label{cor:asvrg-helper}
	Under the same assumptions as \Cref{lem:ASVRG}, for any $\eps>0$ and $\sigma\in(0,1)$ Katyusha1 with batch size $b=1$ and initial point $\bar{x}$ finds an approximate solution $\tilde{x}$ satisfying
\[
F(\tilde{x})-F(\xopt)\le \eps~\mbox{with probability at least }1-\sigma
\]
using
\[
O\left(\left(N+\sqrt{\frac{N\cdot L}{\mu}}\right)\log\left(\frac{F(\bar{x})-F(\xopt)}{\veps\sigma}\right)\right)
~\mbox{evaluations of }\grad f_i(x).
\]
\end{corollary}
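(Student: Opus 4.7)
The plan is to reduce the high-probability statement to the in-expectation bound of Lemma~\ref{lem:ASVRG} via a direct application of Markov's inequality. Specifically, I would run Katyusha1 starting from $\bar{x}$ for $s = \lceil \log_2 ( 2[F(\bar{x})-F(\xopt)]/(\eps\sigma) ) \rceil$ epochs. Lemma~\ref{lem:ASVRG} then guarantees
\[
\E\brk*{F(\tilde{x}_s)-F(\xopt)} \le \frac{1}{2^s}\brk*{F(\bar{x})-F(\xopt)} \le \frac{\eps\sigma}{2},
\]
and since $F(\tilde{x}_s)-F(\xopt)\ge 0$, Markov's inequality yields $\P(F(\tilde{x}_s)-F(\xopt) > \eps) \le \sigma/2 \le \sigma$, which is the claimed high-probability bound.

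For the complexity, substituting the choice of $s$ into the $O(s\cdot (N+\sqrt{NL/\mu}))$ gradient-query bound provided by Lemma~\ref{lem:ASVRG} yields exactly $O((N+\sqrt{NL/\mu})\log(\frac{F(\bar{x})-F(\xopt)}{\eps\sigma}))$ gradient evaluations, matching the corollary (the factor of $2$ inside the logarithm is absorbed into the $O(\cdot)$). I do not anticipate any real obstacle: the geometric-rate expectation bound of Katyusha1 converts to the desired $\log(1/\sigma)$ dependence with a single application of Markov, so no probability amplification via restarts or median-boosting is needed.
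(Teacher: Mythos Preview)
Your proposal is correct and matches the paper's proof essentially verbatim: both apply Markov's inequality to the nonnegative random variable $F(\tilde{x}_s)-F(\xopt)$ using the in-expectation geometric decay from Lemma~\ref{lem:ASVRG}, and then choose $s$ of order $\log\bigl(\tfrac{F(\bar{x})-F(\xopt)}{\eps\sigma}\bigr)$.
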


\begin{proof}
	Since $F(\tilde{x}_s)-F(\xopt)\ge0$, Markov's inequality and \Cref{lem:ASVRG} imply that for any $\eps>0$, 
	\[
	\Pr*(F(\tilde{x}_s)-F(\xopt)\ge \eps)\le \frac{\E\left[F(\tilde{x}_s)-F(\xopt)\right]}{\eps}\le \frac{(1/2)^s\left(f(\bar{x})-f(\xopt)\right)}{\eps},
	\]
	and thus the high-probability bound follows .
\end{proof}

Specializing \Cref{cor:asvrg-helper} for $\beF(x) = \sum_{i\in[N]} p_i(\bx) \gamma_i(x)$, we obtain the following guarantee.

\corasvrg*

\begin{proof}

By the choice of $\reps$ and the bound $\lambda=O(\Lf^2 /\epsilon')$, \Cref{lem:bef-approx} guarantees that the $\gamma_i$ have strong convexity $\mu = \Omega(\lambda)$, and smoothness $L =   O(\Lg + \lambda + \Lf^2/\eps' +\lambda^2\reps^2/\eps') = O(\Lg+ \Lf^2/\eps')$. In addition, for  $\xopt\defeq \arg\min_{x\in\ball_{\reps}(\bx)}\beF(x)$ one has
\begin{align*}
\beF(\bar{x})-\beF(\xopt)& \le \langle\grad_x \beF(\xopt),\bar{x}-\xopt\rangle\\
&   \le \|\grad_x \beF(\xopt)\|\|\bar{x}-\xopt\|= O(\Lf \reps),
\end{align*}
where we use the second property for bounding $\grad_x \beF(\bar{x})$ from \Cref{lem:bef-approx}. Plugging these into the complexity of \Cref{lem:ASVRG} and noticing that each evaluation of $\grad \gamma_i$ requires evaluation of $f_i$ and $\grad f_i$ (assuming $f_i(\bx)$ is pre-stored) gives the stated complexity bound.
\end{proof}

\subsection{Proof of \Cref{thm:ub}}\label{ssec:ub-proof}
With \Cref{cor:sgd,cor:asvrg} established, we prove \Cref{thm:ub}.

\thmub*

\begin{proof}
We use guarantees of \Cref{thm:ms-bacon-redux} and \Cref{cor:sgd,cor:asvrg} on the problem $\min_{x}\Fsm(x), \|x-x_0\|\le O(R)$ to prove the correctness and bound the complexity of the algorithm.

\paragraph{Correctness.}

We first note that \Cref{thm:ms-bacon-redux} guarantees that we only make BROO calls with $\lambda \le O(\Lf / \reps)$, making \Cref{cor:sgd,cor:asvrg} applicable.
Let 
\begin{equation}\label{eq:oracle-call-number-bound}
	T = O\prn*{
		\prn*{ \frac{R}{\reps}}^{2/3}
		 \log^2\prn*{\frac{\Lf R^2}{\reps\epsilon}}
	}
\end{equation}
be the upper bound on the total number of oracle calls guaranteed in \Cref{thm:ms-bacon-redux}. Taking $\sigma = \frac{1}{100T}$ in \Cref{cor:sgd,cor:asvrg} and applying a union bound, we see that with probability at least ${99}/{100}$, the outputs of the corresponding BROO implementations are valid throughout the execution of \Cref{alg:ms-bacon-redux}. Consequently by \Cref{thm:ms-bacon-redux} we have that \Cref{alg:ms-bacon-redux} with accuracy $\eps/2$ outputs $x_o$ such that $\Fsm(x_o)-\min_{x:\|x-x_0\|\le R}\Fsm(x) \le \epsilon/2$. Using the fact that $0 \le \Fsm(x) - \F(x) \le \epsilon/2$ for all $x\in\R^d$ \cite[see, e.g.,][Lemma 45]{carmon2020acceleration}, we conclude that $\F(x_o)-\min_x\F(x) = \F(x_o)-\min_{\{x:\|x-x_0\|\le R\}}\F(x) \le \Fsm(x_o)-\min_{x:\|x-x_0\|\le R}\Fsm(x)+\eps/2\le \eps $, establishing correctness.

\paragraph{Complexity.} 
Substituting $\reps=\epsilon/(2\Lf\log N)$ into~\eqref{eq:oracle-call-number-bound}, we see that the total number of oracle calls is
\begin{equation*}
	T = O\prn*{
		\prn*{ \frac{\Lf R \log N}{\reps}}^{2/3}
		 \log^2\prn*{\frac{\Lf R \log N}{\epsilon}}
	}.
\end{equation*}
To bound to complexity of the SGD implementation, we simply multiply $T$ by the per-call complexity bound~\eqref{eq:sgd-complexity-bound} where we substitute $\delta = \Omega(\epsilon / (\lambda R))$ as guaranteed by \Cref{thm:ms-bacon-redux} and $\sigma = 1/(100T)$. 

To bound the complexity of the accelerated variance reduction implementation, we similarly substitute $\delta = \Omega(\epsilon / (\lambda R))$, $\lambda = O(\Lf/\reps)$ and $\sigma = 1/(100T)$ into~\eqref{eq:asvrg-complexity-bound}. Summing the result over all oracle calls yields the complexity bound
\begin{equation*}
	O\prn*{
		\prn*{\Tf+\Tg}\log\prn*{\frac{\Lf R \log N}{\epsilon}}
		\prn*{NT + \sqrt{N}\prn*{\sqrt{\Lg} + \Lf\sqrt{\frac{\log N}{\epsilon}}}\sum_{i\in[T]} \frac{1}{\sqrt{\lambda\fancyind{i}}}
		}
	},
\end{equation*}
where $\crl{\lambda\fancyind{i}}$ is the sequence of $\lambda$ values with which \Cref{alg:ms-bacon-redux} calls the BROO implementation. 
\Cref{thm:ms-bacon-redux} guarantees that $\sum_{i \in [T]} \frac{1}{\sqrt{\lambda\fancyind{i}}} \leq O\prn[\big]{ \frac{R}{\sqrt{\eps}}\log\frac{\Lf R^2}{r\eps}}=O\prn[\big]{\frac{R}{\sqrt{\epsilon}}\log\frac{\Lf R\log N}{\epsilon}}$, completing the proof.
\end{proof}

\section{Lower bound proofs}\label{sec:app-lb}

\subsection{Proof of \Cref{prop:prog-control}}\label{sec:prog-control-proof}

In this section, we make frequent use of the indication notation $\indic{\cdot}$, where $\indic{A}=1$ if event $A$ holds and $\indic{A}=0$ otherwise.

\propProgControl*

\begin{proof}
	Let us define several quantities that are important for our proof. First, we track the maximum progress attained by the algorithm queries.
	\begin{equation*}
		p_t \defeq \prog(U^\top x_t)
		~~\mbox{and}~~
		\bp \defeq \max_{s \le t} p_s.
	\end{equation*}
	Next, we recursively define a sequence that tracks the algorithm's progress in ``unlocking'' the relevant elements of the finite sum,
	\begin{equation*}
		\B \defeq \indic{\iperm(i_t) = \C+ 1}
		~~\mbox{where}~~
		\C \defeq \min\crl*{\sum_{s < t} \B[s], T}.
	\end{equation*}
	To understand these definitions, note that $\C$ is the largest number $k$ such that $1,2,\ldots,k$ is a subsequence of $\iperm(i_1),\iperm(i_2),\ldots,\iperm(i_t)$. Therefore, a ``zero-respecting'' algorithm (satisfying $p_t \le \max_{s<t}\crl*{\prog(\grad f_{\iperm(i)}(U^\top x_s))}$) can only query points with progress at most $C_t$. We define the event that the general algorithm under consideration behaves as though it was zero-respecting for the first $t$ iterations as
	\begin{equation*}
		\zrEv \defeq \crl*{\bp[s] \le \C[s]~~\mbox{for all}~~s\le t}. 
	\end{equation*}

	We also define the stopping time
	\begin{equation*}
		\Theta_k \defeq \min\crl*{t \mid \C = k}
	\end{equation*}
	and the difference sequence
	\begin{equation*}
		\Delta_k \defeq \Theta_k - \Theta_{k-1}.
	\end{equation*}
	
	Let 
	\begin{equation*}
		\lbt \defeq \frac{1}{16}N \prn*{ T-\log\frac{2}{\delta} }
	\end{equation*}
	so that our goal is to prove that $\P\prn*{ \bp[\floor{\lbt}] < T } >1- \delta$. 
	Note that the intersection of the events $\C[\floor{\tau}] < T$ and  $\zrEv[NT] \subset \zrEv[\floor{\lbt}]$ imply the desired event $\bp[\floor{\lbt}] < T$. Moreover, $\C[\floor{\tau}] < T$ is equivalent to $\Theta_T > \tau$. Consequently, we can upper bound $\P\prn*{ \bp[\floor{\lbt}] \ge T }$ by the probability that $\zrEv[NT]$ does not occur plus the probability that both $\Theta_T \le \lbt$ and $\zrEv[NT]$ occur, i.e.,
	\begin{equation*}
		\P\prn*{ \bp[\floor{\lbt}] \ge T } \le 
		\Pr*( \zrEv[NT]^c  ) + \Pr*( \Theta_T \le \lbt , \zrEv[NT]).
	\end{equation*}
	Our strategy is to bound each of  $\Pr*( \zrEv[NT]^c  )$ and $\Pr*( \Theta_T \le \lbt , \zrEv[NT])$ by $\delta/2$.
	
	As one final piece of of notation, we write $U_{\le k}$ for the first $k$ columns of $U$, and $\perm\restrictedTo{k}$ as shorthand for $\perm(1),\ldots,\perm(k)$.
	\begin{lem}\label{lem:prog-control-key}
		For any $t \ge 1$ and $k\le T$, if the events $\zrEv[t-1]$ and $\C \le k$ hold, the oracle responses to queries $(i_s, x_s)_{s < t}$, as well as the queries $(i_s, x_s)_{s \le t}$,  are deterministic (measurable) functions of $\algseed$, $\perm\restrictedTo{k}$ and $U_{\le k}$. 
		Moreover, 
		\begin{enumerate}[label=(\alph*)]
			\item \label{item:prog-control-key-a} The random variable $\Theta_k \indic{\zrEv[\Theta_k]}$ is measurable w.r.t\ $\algseed, \perm\restrictedTo{k}$ and $U$.
			\item \label{item:prog-control-key-b} When $\zrEv[t-1]$ holds, $\indic{\C = k}$ is measurable w.r.t.\ $\algseed, \perm$ and $U_{\le k}$.
		\end{enumerate}
	\end{lem}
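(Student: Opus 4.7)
My plan is to prove the pre-``moreover'' measurability claim by induction on $t$, from which (a) and (b) follow as corollaries. For the base case $t=1$, we have $C_1=0$ (vacuous sum) and $(i_1,x_1)=Q_1(\algseed)$, with no oracle responses to consider, so the claim holds trivially.

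For the inductive step, assume the claim at $t$ and work on the event $\zrEv[t]\cap\{C_{t+1}\le k\}$, which entails $\zrEv[t-1]\cap\{C_t\le k\}$. By the inductive hypothesis, $(i_s,x_s)_{s\le t}$ and responses for $s<t$ are measurable w.r.t. $\algseed,\perm\restrictedTo{k},U_{\le k}$, and it suffices to treat the response $\orcLocal_{\tilde{f}_{i_t}}(x_t)$, since $(i_{t+1},x_{t+1})$ is then determined via~\eqref{eq:general-iter}. Under $\zrEv[t]$ we have $p_t\le C_t\le k$, so applying \Cref{def:zc} to $\tilde{f}_{i_t}(y)=f_{\iperm(i_t)}(U^\top y)$ with $p=p_t$ yields three cases based on whether $\iperm(i_t)\le p_t$, $\iperm(i_t)=p_t+1$, or $\iperm(i_t)>p_t+1$; in each, the local oracle depends only on the first $p_t$ or $p_t+1$ coordinates of $U^\top x_t$ (and, outside the third case, on the specific value of $\iperm(i_t)$). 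The critical observation is that the subcase $\iperm(i_t)=p_t+1=k+1$ cannot occur: it would force $p_t=C_t=k$, hence $B_t=1$, giving $C_{t+1}=k+1$ and contradicting $C_{t+1}\le k$. Thus throughout we have $p_t+1\le k$, so $U_{\le k}$ suffices for all needed coordinates of $U^\top x_t$, and distinguishing the three cases only requires checking membership of $i_t$ in $\{\perm(1),\ldots,\perm(p_t+1)\}$, which uses only $\perm\restrictedTo{k}$.

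For (a), on $\zrEv[\Theta_k]$ we have $C_{\Theta_k}=k$, so the main claim gives measurability of the trajectory through time $\Theta_k$ w.r.t. $\algseed,\perm\restrictedTo{k},U_{\le k}$; consequently $\Theta_k\indic{\zrEv[\Theta_k]}$ is measurable w.r.t. $\algseed,\perm\restrictedTo{k},U$ (which contains $U_{\le k}$). For (b), I will set up an auxiliary ``simulated'' process that deterministically evolves using $\algseed,\perm,U_{\le k}$, handling each oracle call exactly as the zero-chain rules prescribe for a query whose visible progress is $\le k$, and aborting once it cannot. By the main claim, on $\zrEv[t-1]\cap\{C_t\le k\}$ this simulation coincides with the true process, whereas $\indic{\zrEv[t-1]}\indic{C_t=k}$ automatically vanishes when $C_t>k$; hence the global indicator equals its counterpart in the simulation and is therefore measurable w.r.t. $\sigma(\algseed,\perm,U_{\le k})$.

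The main obstacle is formalizing part (b) rigorously -- constructing an auxiliary process that is globally well-defined on the reduced $\sigma$-algebra and whose indicator coincides with $\indic{\zrEv[t-1]}\indic{C_t=k}$ on all of sample space (not merely on the good event). The inductive argument itself is essentially mechanical once one notices the non-occurrence of the boundary subcase $\iperm(i_t)=p_t+1=k+1$ under the assumption $C_{t+1}\le k$.
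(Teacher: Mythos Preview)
Your approach is essentially the same as the paper's: an inductive reconstruction of the query/response trajectory from $\algseed,\perm\restrictedTo{k},U_{\le k}$, followed by simulation arguments for (a) and (b). Two small points are worth flagging. First, the line ``thus throughout we have $p_t+1\le k$'' is not literally correct: when $p_t=k$ and $\iperm(i_t)\neq k+1$ you are in case~1 or case~3 and only need $U_{\le p_t}=U_{\le k}$, so the bound $p_t+1\le k$ is needed (and established) only in case~2. Second, your case split depends on $p_t$, which a priori involves all of $U$; you should note that on the event $\zrEv[t]\cap\{C_{t+1}\le k\}$ one has $p_t\le k$, so $p_t=\prog\big((U_{\le k})^\top x_t\big)$ is computable from $U_{\le k}$ alone. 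The paper sidesteps both issues by applying the zero-chain property with $p=C_s$ rather than $p=p_s$ (since $C_s$ is computable directly from $\perm\restrictedTo{k}$ and the query indices), yielding the clean closed form $\tilde f_{i_s}(y)=f_{\iperm\restrictedTo{k}(i_s)}\big((U^\top y)_{\le k}\big)$ in a neighborhood of $x_s$; your version works too but requires the extra observations above. For (a) and (b) the paper gives exactly the simulation you sketch, so your worry about formalizing (b) is well-placed but the resolution is the one you already outlined.
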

	
	\begin{proof}
		Consider the query $i_s, x_s$ for $s<t$; we first show that under $\zrEv[t-1]$ and $\C \le k$ we can compute the oracle response to this query using only $\perm\restrictedTo{k}$ and $U_{\le k}$. Since $ \zrEv[t-1]$ holds, we have that $\prog(U^\top x_s) = p_s \le \C[s]$.
		Invoking \Cref{def:zc} of the $N$-element zero chain, there exists a neighborhood of $x_s$ such that for all $y$ in that neighborhood we have 
		\begin{equation}\label{eq:zc-app}
			\tf_{i_s}(y) = f_{\iperm(i_s)}(U^\top y) = 
			\begin{cases}
				f_{\iperm(i_s)}(U_{\le \C[s]}^\top y)
				& \iperm(i_s) < \C[s] + 1 \\
				f_{\iperm(i_s)}(U_{\le \C[s]+1}^\top y)
				& \iperm(i_s) = \C[s] + 1 \\
				f_{N}(U_{\le \C[s]}^\top y)
				& \iperm(i_s) > \C[s] + 1. \\
			\end{cases}
		\end{equation}
		Recall that $\B[s] = \indic{\iperm(i_s) = \C[s]+1}$ and that $\C[s+1]=\min\{\C[s]+\B[s],T\}\le \C[t]\le k$ and let
		\begin{equation*}
			\iperm\restrictedTo{k}(j) = \begin{cases}
				\iperm(j) & j\in \perm([k]) \\
				N & \mbox{otherwise}.
			\end{cases}
		\end{equation*}
		The relationship~\eqref{eq:zc-app} implies that
		\begin{equation*}
			\tf_{i_s}(y)  = f_{\iperm\restrictedTo{k}(i_s)}(U_{\le k}^\top y)
		\end{equation*} 
		for all $y$ in the neighborhood of $x_s$.
		
		The above discussion shows that (under $\zrEv[t-1]$) the oracle response to $(i_s,x_s)$ is measurable with respect to $i_s, x_s, \perm\restrictedTo{k}$ and $U_{\le k}$, for every $s<t$. Moreover, $(i_1, x_1)$ is  measurable with respect to $\algseed$, and consequently the first oracle response and the second query $(i_2, x_2)$ are measurable with respect to $\algseed, \perm\restrictedTo{k}$ and $U_{\le k}$. Repeating this argument inductively shows that (under $\zrEv[t-1]$ and $C_t \le k$) we can generate the entire query sequence $(i_s, x_s)_{s\le t}$, as well as the oracle responses to all but the last query, from $\algseed, \perm\restrictedTo{k}$ and $U_{\le k}$, as claimed.
		
		To show part \ref{item:prog-control-key-a} of the lemma, note that with access to the entire matrix $U$, after generating queries $x_1, \ldots, x_s$ we can test whether $\zrEv[s]$ holds. Moreover, the knowledge of $\perm\restrictedTo{k}$ suffices to test (for any query sequence $i_1, i_2, \ldots$) whether $C_t \ge k$ for every $t$. Therefore, using $\algseed, \perm\restrictedTo{k}$ and $U_{\le k}$ we can iteratively compute $(i_s, x_s)_{s\le t}$ until arriving at an iterate $t$ where either (a) $\C < k$ but $\zrEv[t]$ does not hold (which implies that $\zrEv[\Theta_k]$ fails as well), or (b) $\C = k$ (in which case we have found $\Theta_k$). In either case, we know the value of $\Theta_k \indic{\zrEv[\Theta_k]}$.
		
		Finally, to show part \ref{item:prog-control-key-b} of the lemma, note that with full knowledge of $\perm$ we can track the values of $\C[1],\ldots, \C$ for any sequence of queries $i_1, \ldots, i_{t-1}$. Therefore, given $\zrEv[t-1]$, we may use $\algseed, \perm\restrictedTo{k}$ and $U_{\le k}$ to iteratively generate queries until either (a) we arrive at an iteration $s < t$ where $\C[s] > k$, in which case $\indic{\C=k}=0$, or (b) we successfully generate  iteration $t-1$ in which case we can compute $\C$. In either case, we know the value of $\indic{\C=k}$.
	\end{proof}
	
	\begin{lem}\label{lem:delta-k-lb}
		For every $k\le T$, and $j \ge 0$, we have
		\begin{equation*}
			\Pr*( \Delta_k \le j, \zrEv[\Theta_{k}] | \algseed, \perm\restrictedTo{k-1}, U ) \le \frac{j}{N-k+1}.
		\end{equation*}
	\end{lem}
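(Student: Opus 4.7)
The plan is to condition on $\algseed$, $\perm\restrictedTo{k-1}$, and $U$, and exploit the fact that under this conditioning $\perm(k)$ is uniformly distributed on $[N]\setminus \perm([k-1])$, a set of size $N-k+1$. I will exhibit a random subset $\hat I_j\subseteq [N]$ that is measurable with respect to $\algseed$, $\perm\restrictedTo{k-1}$, $U$ and has cardinality at most $j$, such that on the event $\{\Delta_k\le j\}\cap \zrEv[\Theta_k]$ we have $\perm(k)\in \hat I_j$. The claim then follows because, conditionally on $\algseed$, $\perm\restrictedTo{k-1}$, $U$, the probability that $\perm(k)$ lands in any deterministic subset of $[N]\setminus \perm([k-1])$ of size at most $j$ is at most $j/(N-k+1)$.

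The starting observation is that on $\zrEv[\Theta_k]$, $\Delta_k\le j$ is equivalent to $\perm(k)\in\{i_{\Theta_{k-1}},\ldots,i_{\Theta_{k-1}+j-1}\}$, since once $C_s$ reaches $k-1$ the only way to advance to $C_s=k$ is a query $i_s=\perm(k)$. However these true queries are not a priori measurable with respect to our conditioning, since for $s\ge \Theta_{k-1}$ the oracle responses may depend on the value of $\perm(k)$. To remedy this I will introduce a simulated execution that uses $\algseed$, $\perm\restrictedTo{k-1}$, and $U$: the simulated oracle responds to a query $\hat i_s\in \perm([k-1])$ according to the first or second case of \eqref{eq:zc-def} (using the known value $\iperm(\hat i_s)\le k-1$), and responds to a query $\hat i_s\notin \perm([k-1])$ by the third-case ``null'' response $f_N(U_{\le \hat C_s}^\top \hat x_s)$, as if every unknown index mapped to $N$. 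Letting $\hat \Theta_{k-1}$ denote the simulated analogue of $\Theta_{k-1}$, I will take $\hat I_j\defeq \{\hat i_{\hat \Theta_{k-1}+\ell}\}_{\ell=0}^{j-1}$; by construction this set is measurable with respect to $\algseed$, $\perm\restrictedTo{k-1}$, $U$ and has at most $j$ elements.

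The main technical step is a coupling claim: on $\{\Delta_k\le j\}\cap \zrEv[\Theta_k]$ the simulated execution agrees with the true one up to and including $s^*\defeq \min\{s\ge \Theta_{k-1} : i_s=\perm(k)\}$, which by our starting observation satisfies $s^*\le \Theta_{k-1}+j-1$. I will prove this by induction on $s$ combined with a short case analysis on $\iperm(i_s)$ using \eqref{eq:zc-def}: for every $s\le s^*$ with $i_s\ne \perm(k)$, the true response is either one of the first two cases with $\iperm(i_s)\le k-1$ or the third case (applicable since $\iperm(i_s)>k$ together with $C_s\le k-1$ forces $\iperm(i_s)>C_s+1$), and in every subcase the response depends only on $U$ and on $\iperm$ values that the simulation knows, hence matches the simulated response. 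The hard part will be cleanly handling the critical step $s=\Theta_{k-1}$, where the simulation can first disagree with reality, namely when $i_{\Theta_{k-1}}=\perm(k)$; but in that subcase $s^*=\Theta_{k-1}$, so the coupling trivially holds through $s^*$ and we have caught $\perm(k)$ anyway. Once the coupling is established we get $\hat \Theta_{k-1}=\Theta_{k-1}$ and $\perm(k)=i_{s^*}=\hat i_{s^*}\in \hat I_j$, and the union bound above completes the proof.
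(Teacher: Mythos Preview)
Your approach is correct and is essentially the same as the paper's: both reduce $\{\Delta_k\le j\}\cap\zrEv[\Theta_k]$ to the event that $\perm(k)$ falls in a set of at most $j$ indices that is measurable with respect to the conditioning, and then invoke the conditional uniformity of $\perm(k)$ on $[N]\setminus\perm([k-1])$. The paper phrases this via an ``alternative oracle'' thought experiment and leans on \Cref{lem:prog-control-key} for measurability; you spell out the simulation and coupling explicitly, which has the advantage of transparently handling the iterations $\Theta_{k-1},\ldots,\Theta_k-1$ that the paper's write-up glosses over.

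One small slip to repair: in your case analysis for $s<s^*$ with $i_s\ne\perm(k)$, you assert that $\iperm(i_s)\le k-1$ always lands in ``the first or second case'' of \eqref{eq:zc-def}. Not so---when $C_s+1<\iperm(i_s)\le k-1$ you are in the \emph{third} case, and your simulated oracle as described (only the first or second case for $\hat i_s\in\perm([k-1])$) would respond incorrectly there. The fix is immediate: have the simulation apply whichever of the three cases is dictated by comparing $\iperm(\hat i_s)$ to $\hat C_s+1$ whenever $\iperm(\hat i_s)$ is known (equivalently, respond as in the proof of \Cref{lem:prog-control-key}, treating every unknown index as $N$). With that correction the coupling goes through verbatim.
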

	\begin{proof}
		Suppose the algorithm could access the random permutation $\perm$ via an alternative oracle that, when queried at index $i\in [N]$ returns the number $\iperm(i)$. We say that the algorithm succeeds if one of the first $j$ queries is $\perm(k)$ (so that the oracle returns $k$). 
		Given $\perm\restrictedTo{k-1}$, the random variable $\perm(k)$ is uniformly distributed over $\mathcal{I}=[N]\setminus \perm([k-1])$. Therefore, for any set $\mathcal{J}$ of $j$ queries, we have
		\begin{equation*}
			\Pr*( \perm(k) \in \mathcal{J} | \perm\restrictedTo{k-1} )
			= \frac{\E \abs*{\mathcal{J} \cap \mathcal{I}}}{\abs*{\mathcal{I}}} \le
			\frac{\E \abs*{\mathcal{J}}}{\abs*{\mathcal{I}}} = \frac{j}{N-k+1}
		\end{equation*}
		and so for every algorithm the probability of success is at most $j/(N-k+1)$.

		Now return to the original problem and the original oracle, and note that for $\Delta_k \le j$ to hold, we must have $\iperm(i_t) = k$ for some $t$ in $\{\Theta_{k-1}, \ldots, \Theta_{k-1}+j-1\}$, corresponding to ``success'' in the problem described above. Moreover, when the event $\zrEv[\Theta_{k}]$ holds, \Cref{lem:prog-control-key} guarantees that the oracle responses to the queries at iteration $1, \dots, \Theta_{k-1}-1$ are deterministic functions of $\algseed, U$ and $\perm\restrictedTo{k-1}$ (since $C_{\Theta_{k-1}} = k-1$ by definition). 
		Consequently, when $\zrEv[\Theta_{k-1}]$ holds, the true optimization algorithm operates with no more information that the alternative oracle setting described above, giving the claimed probability bound.
	\end{proof}

	\begin{lem}\label{lem:theta-high-prob}
		We have
		\begin{equation*}
			\Pr*( \Theta_T \le \tfrac{1}{16} N(T-\log\tfrac{2}{\delta}) , \zrEv[NT] ) 	\le \frac{\delta}{2}.
		\end{equation*}
	\end{lem}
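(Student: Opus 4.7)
The plan is to derive a Chernoff-type concentration bound on $\Theta_T$ restricted to the zero-respecting event, using the conditional probability bound from \Cref{lem:delta-k-lb} as input. The approach parallels a negative-binomial lower tail bound: since each $\Delta_k$ has tail CDF bounded by $j/(N-k+1) \le 2j/N$, the sum $\Theta_T = \sum_{k \le T} \Delta_k$ should behave at least as well as a sum of $T$ geometric random variables with parameter $2/N$.

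Before the main argument, I would dispatch the trivial regime. Since $\Delta_k \ge 1$ always, we have $\Theta_T \ge T$, so whenever $L := \log(2/\delta)$ makes $\lbt = \tfrac{1}{16}N(T-L) < T$ the probability vanishes; this covers $L \ge T(1 - 16/N)$ in particular. I would also reduce $\zrEv[NT]$ to $\zrEv[\Theta_T]$ by noting that any $\omega$ with $\Theta_T(\omega) \le \lbt < NT$ satisfies $\zrEv[NT] \subseteq \zrEv[\Theta_T]$, so it suffices to bound $\Pr(\Theta_T \le \lbt, \zrEv[\Theta_T])$.

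The core step is an MGF bound on $\Theta_T$. Writing $\mathcal{G}_k = \sigma(\algseed, \perm\restrictedTo{k}, U)$, summation by parts applied to \Cref{lem:delta-k-lb} yields, for every $\lambda > 0$,
\[
\E\brk*{e^{-\lambda \Delta_k} \indic{\zrEv[\Theta_k]} \,\big|\, \mathcal{G}_{k-1}} \le \frac{1-e^{-\lambda}}{N-k+1} \sum_{j \ge 1} j e^{-\lambda j} = \frac{e^{-\lambda}}{(N-k+1)(1-e^{-\lambda})} \le \frac{2 e^{-\lambda}}{N(1-e^{-\lambda})},
\]
using $k \le T \le N/2$ in the last inequality. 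Since $e^{-\lambda \Theta_{k-1}}\indic{\zrEv[\Theta_{k-1}]}$ is $\mathcal{G}_{k-1}$-measurable by \Cref{lem:prog-control-key}~\ref{item:prog-control-key-a}, and since $\zrEv[\Theta_k] \subseteq \zrEv[\Theta_{k-1}]$, iterating the tower property gives
\[
\E\brk*{e^{-\lambda \Theta_T} \indic{\zrEv[\Theta_T]}} \le \prn*{\frac{2 e^{-\lambda}}{N(1-e^{-\lambda})}}^T.
\]

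Finally, Markov's inequality at the optimal $\lambda^\star = \log(\lbt/(\lbt-T))$ (so $e^{-\lambda^\star}=(\lbt-T)/\lbt$ and $1-e^{-\lambda^\star}=T/\lbt$), together with the standard bound $(1+T/(\lbt-T))^{\lbt-T} \le e^T$, collapses the probability estimate to $(2e\lbt/(NT))^T = (e(T-L)/(8T))^T$. A short computation using $\log((T-L)/(8T)) \le -L/T - \log 8$ and $1 - \log 8 < 0$ then shows $T \log(e(T-L)/(8T)) \le T(1-\log 8) - L \le -L$, yielding the desired bound $e^{-L} = \delta/2$. The only conceptual subtlety is the measure-theoretic iteration of the conditional MGF bound, which is handled cleanly by the measurability structure already established in \Cref{lem:prog-control-key}; everything else is direct calculation.
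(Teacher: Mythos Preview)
Your argument is correct. The reduction from $\zrEv[NT]$ to $\zrEv[\Theta_T]$, the summation-by-parts conversion of the CDF bound from \Cref{lem:delta-k-lb} into a conditional Laplace-transform bound on each $\Delta_k$, the tower-property iteration via \Cref{lem:prog-control-key}\ref{item:prog-control-key-a}, and the final numerical verification all check out. (The boundary case $\lbt=T$ makes $\lambda^\star$ degenerate, but your final inequality $(2e\lbt/(NT))^T\le e^{-L}$ still holds there, and for $\lbt<T$ the event is empty as you note.)

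The paper takes a somewhat different route. Rather than bounding the Laplace transform of $\Theta_T$ directly, it observes that $\Theta_T\le \tfrac{1}{8}N\cdot\tfrac{1}{2}(T-\log\tfrac{2}{\delta})$ forces at least $\tfrac{1}{2}(T+\log\tfrac{2}{\delta})$ of the increments $\Delta_k$ to satisfy $\Delta_k\le N/8$, and then applies a Chernoff bound (with the fixed exponent $2$) to the Bernoulli sum $\sum_{k\in[T]}\indic{\Delta_k\le N/8,\,\zrEv[\Theta_k]}$. \Cref{lem:delta-k-lb} with $j=N/8$ bounds each conditional success probability by $1/4$, and the inequality $1+\tfrac{1}{4}(e^2-1)\le e$ makes the MGF iterate to $e^T$, giving the result without any optimization over the Chernoff parameter. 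Your approach is the more textbook ``negative-binomial lower tail'' argument and uses the full strength of the CDF bound in \Cref{lem:delta-k-lb}; the paper's approach is slicker in that it reduces to a Bernoulli count and avoids both the summation-by-parts step and the $\lambda$-optimization, at the cost of the small extra observation linking $\Theta_T$ to the count of short increments.
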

	\begin{proof}
		Let $T_{\pm\delta} \defeq \half \prn*{T \pm\log\frac{2}{\delta}} $. We begin with a sequence of straightforward inequalities:
		\begin{flalign*}
			\Pr( \Theta_T \le \tfrac{1}{8} N T_{-\delta} , \zrEv[NT] ) \hspace{-96pt} &\hspace{96pt}
			\le 
			\Pr( \Theta_T \le \tfrac{1}{8}  N T_{-\delta} , \zrEv[\Theta_T] )
			\\ &\le 
			\Pr*( \sum_{k \in [T]} \indic{\Delta_k 
				>  \tfrac{1}{8} N} \le  T_{-\delta} , \zrEv[\Theta_T] )
			\\&	=
			\Pr*( \sum_{k \in [T]} \indic{\Delta_k 
				\le   \tfrac{1}{8} N} > T_{+\delta} , \zrEv[\Theta_T] )
			\\&\le
			\Pr*( \sum_{k \in [T]} \indic{\Delta_k 
				\le   \tfrac{1}{8} N, \zrEv[\Theta_{k}]}>  T_{+\delta} )
			\\&\le 
			e^{-2T_{+\delta}} \Ex*{ e^{ 2\sum_{k \in [T]} \indic{\Delta_k 
						\le   \frac{1}{8} N, \zrEv[\Theta_{k}]}} }, \numberthis
			\label{eq:prog-control-chernoff-bound}
		\end{flalign*}
		where the last transition is an application of the Chernoff bound (with parameter $\lambda=2$). 
		
		By \Cref{lem:prog-control-key}, the random variable $\indic{\Delta_k 
			\le   \frac{1}{8} N, \zrEv[\Theta_{k}]}$ is measurable with respect to $\algseed, \perm\restrictedTo{k}$ and $U$ since it is a function of $\Theta_{k} \indic{ \zrEv[\Theta_{k}] }$ and $\Theta_{k-1} \indic{ \zrEv[\Theta_{k-1}] }$. Therefore,
		\begin{flalign*}
			\hspace{2em} & \hspace{-2em} \Ex*{ e^{ 2\sum_{k \in [T]} \indic{\Delta_k 
						\le   \frac{1}{8} N, \zrEv[\Theta_{k}]}} }
			\\ &= \Ex*{
				\prn*{ 1 + (e^2 - 1)\Pr*( \Delta_T \le \tfrac{1}{8}, \zrEv[\Theta_{T}] | \algseed, \perm\restrictedTo{T-1}, U ) }e^{ 2\sum_{k \in [T-1]} \indic{\Delta_k 
						\le   \frac{1}{8} N, \zrEv[\Theta_{k}]}} }
			\\ & \le \prn*{1+\frac{1}{4}(e^2 -1)}  
			\Ex*{e^{ 2\sum_{k \in [T-1]} \indic{\Delta_k 
						\le   \frac{1}{8} N, \zrEv[\Theta_{k}]}}},
		\end{flalign*}
		where the inequality follows from~\Cref{lem:delta-k-lb} with $j=N/4$ and $k=T\le N/2$. Noting that $1+\frac{1}{4}(e^2 -1) \le e$ and iterating this argument, we conclude that
		\begin{equation*}
			\Ex*{ e^{ 2\sum_{k \in [T]} \indic{\Delta_k 
						\le   \frac{1}{8} N, \zrEv[\Theta_{k}]}} } \le e^T.
		\end{equation*}
		Substituting this bound into \cref{eq:prog-control-chernoff-bound} and recalling that $e^{-2T_{+\delta}} = \frac{\delta}{2} e^{-T}$ concludes the proof.
	\end{proof}
	
	\begin{lem}\label{lem:zr-high-prob}
		We have
		\begin{equation*}
			\Pr*( \zrEv[NT]^c ) \le \frac{\delta}{2}. %
		\end{equation*}
	\end{lem}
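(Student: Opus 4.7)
The plan is to reduce $\zrEv[NT]^c$ to a polynomial-size union of sphere-projection events. If $\zrEv$ fails for the first time at step $t\le NT$, then $\bp[t]>\C[t]$ while $\bp[t-1]\le \C[t-1]$, and since $\C[t]\ge \C[t-1]$ this forces $\prog_\alpha(U^\top x_t)>\C[t-1]$; in particular, some $i\in\{\C[t-1]+1,\ldots,T\}$ satisfies $|U_i^\top x_t|>\alpha$. Setting $\mathcal{A}_{t,i}\defeq \{\zrEv[t-1]\}\cap\{i>\C[t-1]\}\cap\{|U_i^\top x_t|>\alpha\}$, we obtain $\zrEv[NT]^c\subseteq \bigcup_{t\in[NT]}\bigcup_{i\in[T]}\mathcal{A}_{t,i}$, and a union bound reduces the task to showing each $\Pr(\mathcal{A}_{t,i})\le \delta/(2NT^2)$.

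For a fixed $(t,i)$, I would condition on $(\algseed,\perm,U_{\le \C[t-1]})$. By \Cref{lem:prog-control-key}, on the event $\zrEv[t-1]$ the query $x_t$ is a deterministic measurable function of $(\algseed,\perm\restrictedTo{\C[t-1]},U_{\le\C[t-1]})$, and lies in the unit ball so that $\|x_t\|\le 1$. Given $U_{\le \C[t-1]}$, the remaining columns $U_{\C[t-1]+1},\ldots,U_T$ form a uniformly random orthonormal frame in the $(d-\C[t-1])$-dimensional orthogonal complement of $\mathrm{span}(U_{\le \C[t-1]})$, so the marginal distribution of $U_i$ is uniform on the unit sphere of that subspace. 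Applying the standard spherical tail bound $\Pr(|v^\top u|>\alpha\|u\|)\le 2e^{-(d'-1)\alpha^2/2}$ for $v$ uniform on $\mathbb{S}^{d'-1}$, with $d'=d-\C[t-1]\ge d-T+1$, yields
\[
\Pr\prn*{|U_i^\top x_t|>\alpha \,\big|\, \algseed,\perm,U_{\le \C[t-1]}}\le 2e^{-(d-T)\alpha^2/2}.
\]
By the assumption $d\ge T+(2/\alpha^2)\log(4NT^2/\delta)$, the right-hand side is at most $\delta/(2NT^2)$, and summing over the $NT\cdot T$ pairs $(t,i)$ gives the claimed $\delta/2$ bound.

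The main technical obstacle is that $\zrEv[t-1]$ itself depends on $U_{>\C[t-1]}$ through past queries' inner products with every column of $U$, so it is not cleanly measurable with respect to $(\algseed,\perm,U_{\le\C[t-1]})$ alone; this must be handled before the conditioning step above is rigorous. I would resolve it by running, in parallel with the real algorithm, a coupled ``zero-respecting'' variant in which the oracle at each step $s$ is forced to return the response obtained by replacing $\tf_{i_s}$ with the truncated form on the right-hand side of \cref{eq:zc-def} (using a running fake counter $\C[s]^*$ and the columns $U_{\le \C[s]^*}$), regardless of whether the true progress condition holds. By construction the coupled algorithm's queries $x_t^*$ and counters $\C[t]^*$ are unconditionally measurable with respect to $(\algseed,\perm\restrictedTo{\C[t-1]^*},U_{\le\C[t-1]^*})$, and the two algorithms agree whenever $\zrEv[NT]$ holds, so the event $\zrEv[NT]^c$ is the same for both. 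The sphere-concentration union bound then applies to the coupled algorithm with no measurability issues; once this coupling is set up, the remainder of the argument is routine.
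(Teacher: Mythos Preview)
Your approach is essentially the same as the paper's: decompose $\zrEv[NT]^c$ via a union bound over the first failure time $t$ and the offending coordinate $i$, then argue that on $\zrEv[t-1]$ the query $x_t$ is determined by a prefix of $U$'s columns, and finally apply sphere concentration to the remaining columns. The paper handles the measurability concern you raise by invoking the function $h$ from \Cref{lem:prog-control-key}, which is precisely your ``coupled zero-respecting variant,'' and then simply dropping the indicator of $\zrEv[t-1]$ via the containment $\{p_t>\C[t],\zrEv[t-1],\C[t]=k\}\subseteq\{\prog(U^\top h(\algseed,\perm,U_{\le k}))>k\}$. So your coupling is not a new device; it is exactly what \Cref{lem:prog-control-key} already packages.

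There is one index slip that matters. You should use $\C[t]$ rather than $\C[t-1]$ throughout. \Cref{lem:prog-control-key} gives $x_t$ as a function of $(\algseed,\perm\restrictedTo{k},U_{\le k})$ only when $\C[t]\le k$; taking $k=\C[t-1]$ fails whenever $\C[t]=\C[t-1]+1$, because the oracle response at step $t-1$ can then depend on column $U_{\C[t]}$ and hence so can $x_t$. Likewise, your coupled queries satisfy $x_t^*$ measurable with respect to $U_{\le \C[t]^*}$, not $U_{\le \C[t-1]^*}$. Correspondingly, the event $p_t>\C[t]$ yields a coordinate $i>\C[t]$ (not merely $i>\C[t-1]$), and you need that stronger inequality so that $U_i$ is conditionally uniform on the complement of $U_{\le \C[t]}$ and independent of $x_t$. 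With $\C[t]$ in place of $\C[t-1]$ (and, as the paper does, summing over deterministic values $\C[t]=k$ to avoid conditioning on a random-dimensional prefix), your argument is correct and coincides with the paper's.
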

	
	\begin{proof}
		By definition, we have
		\begin{equation}\label{eq:zr-fail-prob-breakdown}
			\Pr*( \zrEv[NT]^c ) = \sum_{t \in [NT]} \Pr*( p_t > \C , 
			\zrEv[t-1] ).
		\end{equation}
		We fix $t \le NT$ and argue that $\Pr*( p_t > \C, \zrEv[t-1]) \le \frac{\delta}{2NT}$. 
		Let $k\le T$; by \Cref{lem:prog-control-key} events $\zrEv[t-1]$ and $\C = k$ hold, we have $x_t = h(\algseed, \Pi, U_{\le k})$ for some measurable functions $h$. Therefore, we have
		\begin{flalign*}
			\hspace{2em} & \hspace{-2em}
			\Pr*( p_t > \C , \zrEv[t-1]  | \C=k, \algseed, \perm, U_{\le k} )
			\\ & \overeq{(i)}
			\Pr*( p_t > k , \zrEv[t-1]  | \algseed, \perm, U_{\le k} )
			\\ & =
			\Pr*( \prog(U^\top x_t) > k , \zrEv[t-1]  | \algseed, \perm, U_{\le k} )
			\\ & =
			\Pr*( \prog(U^\top h(\algseed, \Pi, U_{\le k})) > k , \zrEv[t-1]  | \algseed, \perm, U_{\le k} )
			\\ & \le 
			\Pr*( \prog(U^\top h(\algseed, \Pi, U_{\le k})) > k  | \algseed, \perm, U_{\le k} )
			\\ & \overle{(ii)}
			\sum_{j=k+1}^{T} \Pr*( \abs[\big]{u_j^\top h(\algseed, \Pi, U_{\le k})} >\alpha  | \algseed, \perm, U_{\le k} ),
		\end{flalign*}
		where $(i)$ follows from part \ref{item:prog-control-key-b} of \Cref{lem:prog-control-key}, and $(ii)$ follows from the definition of $\prog$ and a union bound, where we write $u_j$ for the $j$th column of $U$. 

		Conditionally on $\algseed, \perm, U_{\le k}$, $u_j$ is uniform over the unit ball in the subspace of $\R^{d}$ orthogonal to $U_{\le k}$. Moreover, the magnitude of the projection of $h(\algseed, \perm, U_{\le k})$ to that subspace can be at most $1$, since $x_t$ is a unit vector. Therefore, the probability that $\abs[\big]{u_j^\top h(\algseed, \perm, U_{\le k})} > \alpha$ holds is at most the probability that a coordinate of a uniform unit vector in $\R^{d-k}$ has magnitude greater than $\alpha$. By standard concentration of measure arguments, this probability is at most $2\exp(-\frac{d-k+1}{2\alpha^2})$. By our choice of $d$ (and recalling $k\le T$), we have that
		\begin{equation*}
			\Pr*( p_t > \C , \zrEv[t-1]  | \C=k, \algseed, \perm, U_{\le k} ) \le 
			(T-k) \cdot \frac{\delta}{2NT^2} \le \frac{\delta}{2NT},
		\end{equation*}
		and substituting back into~\cref{eq:zr-fail-prob-breakdown} concludes the proof.
	\end{proof}

\end{proof} %
\subsection{Proof of \Cref{lem:hard-instance-props}}\label{sec:hard-instance-props-proof}

\lemHardInstanceProps*

\begin{proof}
	To see why part \ref{item:hi-zc} holds, fix $x\in \R^T$ and let $p = \prog[\alpha_T](x)$. 
	First, we have 
	 $\fhard_i(x) = \fhard_i(x\coind{\le i})$  for every $i$ and $x$, which immediately gives the first two cases of ~\cref{eq:zc-def}. Second, when $i > p+1$, we have $|x\coind{i} - x\coind{i-1}| < 2\alpha_T$ and therefore $\fhard_i(x) = \linkfun(t)\indic{i\le N}$ for some $|t| < \alpha$. Consequently $\fhard_i$ is identically zero in a neighborhood of $x$, giving the third and final case in~\cref{eq:zc-def}.
	
	Part \ref{item:hi-lip} is immediate because $\fhard_i$ is a composition of a 1-Lipschitz and $\nsm$-smooth function with the linear transformation $(x\coind{i}-x\coind{i-1})/2$ which has operator norm smaller than 1.
	
	Finally, to see part \ref{item:hi-optgap}, first note that the global minimum of $\Fhard$ satisfies $\Fhard(\xopt) = 0$ and $(\xopt)\coind{i} = \frac{1}{\sqrt{T}}$ for every $i\le T$ (and therefore has unit norm). Consider any $x$ for which $\prog[\alpha_T](x) < T$, so that $x\coind{T} \le \alpha_T \le \frac{1}{\sqrt{T}}$. We have
	\begin{equation*}
		\max_{i\le T} \abs*{ x\coind{i-1} - x\coind{i} } \ge \frac{1}{T} \sum_{i \in [T]} 
		\abs*{ x\coind{i-1} - x\coind{i} } \ge \frac{1}{T} \abs*{x\coind{0} - x\coind{T}} \ge \frac{1}{T} \prn*{\frac{1}{\sqrt{T}} - \alpha_T} \ge \frac{3}{4T^{3/2}}.
	\end{equation*} 
	Since $\linkfun(t)$ is non-decreasing in $|t|$, we have $\Fhard(x) = \linkfun\prn*{\half \max_{i\le T} \abs*{ x\coind{i-1} - x\coind{i}}}$, and consequently $\Fhard(x) \ge \linkfun\prn*{\frac{3}{8T^{3/2}}}$. To obtain the final bound, we observe that $\linkfun[\alpha,\nsm](t) \ge \min\crl*{ \half (t- \alpha), \frac{\ell}{2} (t-\alpha)^2 }$ for any $t\ge \alpha$.
\end{proof} %
\subsection{Proof of \Cref{thm:lb}}\label{sec:lb-proof}

\thmLB*

\begin{proof}
	We first show that an $\Omega \prn[\big]{  
		N \brk[\big]{ \prn{\frac{\Lf R}{\epsilon}}^{2/3} \wedge \prn{\frac{\Lg R^2}{\epsilon}}^{1/3}}}$ lower bound follows from our construction in the previous section. Fix any $T>1$ and $\nsm\ge 0$ and let $d = \ceil*{T+ 4\alpha_T^{-2}\log 4NT}$ with $\alpha_T = 1/(4T^{3/2})$.
	Let $\perm$ be random permutation of $[N]$ and let $U$ be drawn uniformly from the set of  $d\times T$ orthogonal matrices. For $i\in[N]$, let
	\begin{equation*}
		\tf_i(x) = \fhard_{\iperm(i)}(U^\top x).
	\end{equation*}
	Then, by \Cref{prop:prog-control} and \Cref{lem:hard-instance-props}.\ref{item:hi-zc}, any optimization algorithm interacting with $(\tf_i)_{i\le N}$ satisfies with probability as least $1/2$ that $\prog[\alpha_T](U^\top x_i) < T$ for every $i \le \frac{1}{64}NT$. 

	Set
	\begin{equation*}
		T = \floor*{\frac{1}{5}\brk*{ \prn[\Big]{\frac{\Lf R}{\epsilon}}^{2/3} \wedge \prn[\Big]{\frac{\Lg R^2}{\epsilon}}^{1/3}}}
		~~\mbox{and}~~
		\nsm = \frac{\Lg R}{\Lf}.
	\end{equation*}
	We may assume $T \le N/2$ without loss of generality, since otherwise the second term in the lower bound dominates. Let
	\begin{equation*}
		f_i(x) = \Lf R \tf_i(x/R).
	\end{equation*} 
	With these settings, we have that $f_i$ is  both $\Lf$-Lipschitz and $\Lg$-smooth for every $i$ due to \Cref{lem:hard-instance-props}.\ref{item:hi-lip}, the choice of $\nsm$, and the fact that $U$ is orthogonal. Moreover, if $x_1, x_2, \ldots$ are the iterates of an algorithm interacting with a finite sum oracle for $\problemInstance$ then $x_1/R, x_2/R, \ldots$ are the iterates of an algorithm interacting with $(\tf_i)_{i\le[N]}$. Therefore, by the above discussion, with probability at least $1/2$ the first
	\begin{equation*}
		\frac{1}{64}NT = \Omega \prn*{  
			N \brk*{ \prn[\Big]{\frac{\Lf R}{\epsilon}}^{2/3} \wedge \prn[\Big]{\frac{\Lg R^2}{\epsilon}}^{1/3}}}
	\end{equation*}
	iterates of the algorithm satisfy $\prog[\alpha_T](U^\top x) < T$, and  consequently, by \Cref{lem:hard-instance-props}.\ref{item:hi-optgap}, they are 
	\begin{equation*}
		\Lf R \min\crl*{ \frac{1}{8T^{3/2}}, \frac{\nsm}{32T^3} }
		=
		\min\crl*{ \frac{\Lf R}{8T^{3/2}}, \frac{\Lg R^2}{32T^3} } > \epsilon
	\end{equation*}
	suboptimal for $\max_{i\in[N]}f_i$. To conclude the linear in $N$ lower bound, we note that since the sub-optimality bound holds with probability at least $1/2$ over $\perm$, $U$ and the algorithm randomness,  for every algorithm there must exist fixed $\perm$ and $U$ for which the bound holds with probability $1/2$ over the randomness of the algorithm alone.
	
	To show the remaining $\Omega \prn[\big]{ \brk[\big]{ \prn[\big]{\frac{\Lf R}{\epsilon}}^2 \wedge \prn[\big]{\frac{N \Lg R^2}{\epsilon}}^{1/2}}}$ term in the lower bound, we
	recall the following classical result. 
	For every $R'>0$ there exists a distribution over functions $F:\R_d \to \R$ with $d'=O \prn*{ \brk*{
			\prn[\big]{\frac{\Lf {R'}}{\epsilon}}^{2} \wedge \prn[\big]{\frac{\Lg {R'}^2}{\epsilon}}^{1/2}}
		\log \frac{N\Lf {R'}}{\epsilon } }$ that are $\Lg$-Lipschitz, $\Lf$-smooth and has a global minimizer with norm at most $R$, such that for any algorithm interacting with $F$, with probability at least $1-\frac{1}{2N}$, the first
	\begin{equation*}
		T_{R'} = \Omega\prn*{ \brk*{ \prn[\Big]{\frac{\Lf {R'}}{\epsilon}}^2 \wedge \prn[\Big]{\frac{Lg {R'}^2}{\epsilon}}^{1/2}}}
	\end{equation*}
	iterations are $\epsilon$ suboptimal for $F$. This result follows from smoothing Nemirovski's function; see~\citet[][Theorem 14 with $p=2$ and $\kappa=0,1$]{diakonikolas2020lower}. 
	To  strengthen the smooth term in the lower bound, we consider $N$ copies of this construction with $R' = R/\sqrt{N}$ that operate on distinct coordinates, i.e., we let $f_i(x)=F(x\coind{1+(i-1)d'}, \ldots, x\coind{id'})$, so that the global minimizer of $\max_i f_i(x)$, which consists of $N$ copies of the global minimizer of $F$, has norm at most $R'\sqrt{N}=R$, and consequently we may constrain the domain to $\ball_{N}^{d'N}$ without decreasing the optimality gap of any point in the ball. For any algorithm interacting with $\problemInstance$, the first 
	\begin{equation*}
		N T_{R/\sqrt{N}}
		=
		\Omega \prn*{ \brk*{ \prn[\Big]{\frac{\Lf {R}}{\epsilon}}^2 \wedge \prn[\Big]{\frac{N Lg {R}^2}{\epsilon}}^{1/2}}}
	\end{equation*}
	queries of the algorithm must select one of the $N$ components at most $T_{R/\sqrt{N}}$ times and therefore (with probability at least $\half$) be $\epsilon$ suboptimal for at least one component, and hence for their maximum. This gives the sublinear in $N$ term of the lower bound. We remark that the ``hard instance duplication'' argument is at the core of existing lower bounds for finite sum optimization~\cite{woodworth2016tight,fang2018near}; our argument for proving the linear in $N$ lower bound is inherently different.
\end{proof} %
\subsection{Extension to unconstrained setting}\label{app:lb-unconstrained}

\newcommand{\mfhard}{\bar{f}^{\{N,T,\nsm\}}}
\newcommand{\mFhard}{\bar{F}_{\max}^{\{N,T,\nsm\}}}
While we state and prove our lower bound for optimization problems whose domain is a ball of radius $R$, it also extends to the case of unconstrained setting of our upper bounds. That is, when the domain is $\R^d$ and we are guaranteed a local minimizer exists in a radius of $R$ from the initial point. One way to show this extension is the technique of~\cite{diakonikolas2020lower} where we replace $\fhard_i$ with 
\begin{equation}\label{eq:hard-instance-with-norm-barrier}
	\mfhard_i(x) \defeq \min_{y\in \R^d} \crl*{\max\crl*{\fhard_i(y), \norm{y}-1-\nsm^{-1}}  + \frac{\ell}{2}\norm{y-x}^2}.
\end{equation}

The definition above consists of two modification: pairwise maximum with $\norm{\cdot}-1$ and infimal convolution with $\frac{\nsm}{2}\norm{\cdot}^2$. The pairwise maximum  guarantees that  large norm queries cannot break the zero-chain progress control mechanism: responses to query points with norm  $\Omega(1)$ will not depend on the random transformation $U$ at all, while for responses with norm $O(1)$ we can control the progress using the zero-chain structure of $\fhard$ as before.  The infimal convolution guarantees the function remains $\ell$ smooth (and also does not increase the Lipschitz constant). 

Finally, it remains to check that the new construction still satisfies property~\ref{item:hi-optgap} of \Cref{lem:hard-instance-props} up to a constant. To see that it does, first note that the global minimizer of $\mFhard(x)\defeq \max_{i\in[N]} \mfhard_i(x)$ still satisfies ${\xopt}\coind{i}= 1/\sqrt{T}$ for all $i\le T$, and that $\mFhard(\xopt)=0$. 
Moreover, we clearly have
\begin{equation*}
		\mfhard_i(x) \ge  \min_{y\in \R^d} \crl*{\linkfun[\alpha_T,\nsm](y/2)  + \frac{\ell}{2}\norm[\Big]{y-\frac{x\coind{i}-x\coind{i-1}}{2}}^2} \defeq \tilde{\psi}\prn*{\frac{x\coind{i}-x\coind{i-1}}{2}},
\end{equation*}
and it is not hard to verify that $\tilde{\psi}(t) \ge \linkfun[\alpha_T, \nsm](c t)$ for some constant $c>0$ (in fact, $\tilde{\psi}(t) = \linkfun[\alpha_T, c\nsm](t)$).

\end{document}